\documentclass{amsart}
\usepackage{amsmath,amssymb,amsfonts,amsthm,mathrsfs,color,graphics,amsxtra,amscd}
\usepackage{hyperref}
\usepackage{mathtools}
\usepackage[all, cmtip]{xy}
 \usepackage[dvips]{graphicx}
\usepackage[a4paper]{geometry}
\usepackage{textcomp}
\usepackage{enumerate}

\newtheorem{thm}{Theorem}[section]
\newtheorem{prop}[thm]{Proposition}
\newtheorem{cor}[thm]{Corollary}
\newtheorem{lem}[thm]{Lemma}

\theoremstyle{definition}
\newtheorem{definition}[thm]{Definition}

\newtheorem{rem}[thm]{Remark}
\newtheorem{art}[thm]{}

\numberwithin{paragraph}{section}
\numberwithin{equation}{thm}

\def\P{{\mathbb P}}
\def\N{{\mathbb N}}
\def\Z{{\mathbb Z}}
\def\Q{{\mathbb Q}}
\def\R{{\mathbb R}}
\def\C{{\mathbb C}}

\def\KO{{\mathcal O}}

\def\KL{{\mathscr L}}
\def\KX{{\mathscr X}}
\def\KY{{\mathscr Y}}

\newcommand{\metr}{{\|\hspace{1ex}\|}}

\def\an{{\rm an}}

\def\div{{\rm div}}

\newcommand{\Xan}{{X^{\rm an}}}
\newcommand{\Yan}{{Y^{\rm an}}}

\newcommand{\Acal}{{\mathscr A}}

\newcommand{\Ecal}{{\mathscr E}}
\newcommand{\Fcal}{{\mathscr F}}

\newcommand{\Hcal}{{\mathscr H}}

\newcommand{\Lcal}{{\mathscr L}}
\newcommand{\Mcal}{{\mathscr M}}

\newcommand{\Ocal}{{\mathcal O}}

\newcommand{\Xcal}{{\mathscr X}}

\newcommand{\Ycal}{{\mathscr Y}}

\newcommand{\Zcal}{{\mathscr Z}}

\newcommand{\Div}{{\rm div}}
\newcommand{\cyc}{{\rm cyc}}
\newcommand{\Pic}{{\rm Pic}}

\newcommand{\Spec}{{\rm Spec}}
\newcommand{\Spf}{{\rm Spf}}

\newcommand{\ve}{{\varepsilon}}
\newcommand{\id}{{\rm id}}

\newcommand{\supp}{{\rm supp}}

\newcommand{\kcirc}{{ K^\circ}}
\newcommand{\ktilde}{{ \tilde{K}}}

\newcommand{\Lan}{{L^{\rm an}}}

\newcommand{\fX}{{\mathfrak{X}}}
\newcommand{\fY}{{\mathfrak{Y}}}
\newcommand{\fU}{{\mathfrak{U}}}
\newcommand{\fL}{{\mathfrak{L}}}
\newcommand{\fM}{{\mathfrak{M}}}

\newcommand{\fV}{{\mathfrak{V}}}
\newcommand{\fW}{{\mathfrak{W}}}

\def\Xcal{{\mathscr X}}

\newcommand{\Ko}{{K^\circ}}

\newcommand{\Pichat}{\widehat{\rm Pic}}

\setcounter{tocdepth}{1}

\begin{document}
\title{On Zhang's semipositive metrics}
\author{Walter Gubler and Florent Martin}

\begin{abstract}
Zhang introduced semipositive metrics on a line bundle of a proper variety. 
In this paper, we generalize such metrics for a line bundle $L$ of a  paracompact strictly $K$-analytic space $X$ over any non-archimedean field $K$. 
We prove various properties in this setting such as density of piecewise $\Q$-linear metrics in the space of continuous metrics on $L$. 
If $X$ is proper scheme, then we show that algebraic, formal and piecewise linear metrics are the same. 
Our main result is that on a proper scheme $X$ {over an arbitrary non-archimedean field $K$}, the set of semipositive model metrics is 
closed with respect to pointwise convergence generalizing a result from Boucksom, Favre and Jonsson 
{where $K$ was assumed to be discretely valued with residue characteristic  $0$.}

\bigskip

\noindent
MSC: Primary 14G40; Secondary  14G22
\end{abstract}

\maketitle

\tableofcontents

\section{Introduction}
\label{Introduction}

An arithmetic intersection theory on arithmetic surfaces was introduced by Arakelov and used by Faltings to prove the Mordell conjecture. In higher dimensions, the theory was developed by Gillet and Soul\'e which proved to be a very useful tool in diophantine geometry. To produce arithmetic intersection numbers from a given line bundle $L$ on a proper variety $X$ over a number field $K$, one has to endow the complexification of $L$ with a smooth hermitian metric and one has to choose an  $\Ocal_K$-model $(\Xcal,\Lcal)$ for $(X,L)$. 

Zhang \cite{zhang95} realized that the contribution of a non-archimedean place $v$ to this arithmetic intersection number is completely determined by a metric on $L(K_v)$ associated with $\Lcal$, where $K_v$ is the completion  of $K$ at $v$. This adelic point of view is very pleasant as it allows to deal with archimedean and non-archimedean places in a similar way. 
Motivated by his studies of the Bogomolov conjecture \cite{zhang93}, Zhang \cite{zhang95} introduced semipositive adelic metrics as a uniform limit of metrics induced by nef models and he showed that every polarized dynamical system has a canonical metric inducing the canonical height of Call and Silverman.  

In \cite{gubler98}, it became clear that Zhang's  metrics can be generalized over any non-archimedean field $K$ working with formal models of the line bundle over the valuation ring. It 
turned out that such metrics are continuous on the Berkovich analytification of the line bundle and so we call them continuous semipositive metrics.  

Chambert-Loir introduced measures $c_1(L,\metr)^{\wedge n}$ on the Berkovich space $\Xan$ for a continuous semipositive metric $\metr$ of a line bundle $L$ over $X$ (\cite{chambert-loir-2006}, \cite{gubler-2007b}). 
These measures are non-archimedean equidistribution measures as in  Yuan's equidistribution theorem \cite{yuan-2008} over number fields (see also \cite{CLT09}).
The analogue over function fields was proven in \cite{faber}, \cite{gubler08} and gave rise to progress for the geometric Bogomolov conjecture \cite{gubler-2007b}, \cite{yamaki13,yamaki16}.

Continuous semipositive metrics played an important role in the study of the  arithmetic geometry of toric varieties 
due to Burgos-Gil, Philippon and  Sombra, see \cite{BurgosPhilipponSombra}, \cite{BPS15}, \cite{BPS16}, \cite{BMPS16} with Moriwaki and \cite{BPRS15} with Rivera-Letelier.
Katz--Rabinoff--Zureick-Brown \cite{KRZB} used semipositive model metrics to give  explicit uniform bounds for the number of rational points in situations suitable to the Chabauty--Coleman method.

For the non-archimedean Monge--Amp\`ere problem, continuous semipositive metrics are of central importance. Uniqueness up to scaling was shown by Yuan and Zhang \cite{yuan-zhang}. 
In case of residue characteristic  $0$, a solution was given by Boucksom, Favre and Jonsson \cite{BFJ1,BFJ2} using an algebraicity condition which was removed in \cite{BGJKM}.

Semipositive model metrics also played  a role in the thesis of Thuillier \cite{thuillier05} on potential theory on curves, in the work of Chambert-Loir and Ducros on forms and currents on Berkovich spaces \cite{chambert-loir-ducros} and in the study of delta-forms in \cite{GK1,GK2}. 

Looking at the above references, one observes that the authors work either under the hypothesis that the valuation is discrete or that $K$ is algebraically closed. The reasoning behind the former is that the valuation ring and hence the models are noetherian. If $K$ is algebraically closed, then the valuation ring is not noetherian (unless the valuation is trivial, but we exclude this case here). Working with formal models using Raynaud's theory, this is not really a problem. The assumption that $K$ is algebraically closed is used to have plenty of formal models which have locally the form $\Spf(\Acal^0)$, where $\Acal^0$ is the subring of power bounded elements in an $K$-affinoid algebra $\Acal$. It has further the advantage that finite base changes are not necessary in the semistable reduction theorem or in de Jong's alteration theorems. 
This division has the annoying consequence that many results obtained under one of these hypotheses cannot be used under the other hypothesis. 
Moreover, there is a growing group of people who would like to use Zhang's metrics over any non-archimedean base field\footnote{{The new paper of Boucksom and Eriksson \cite{BE} is an example for this point of view.}}. 
The goal of this paper is to remedy this situation and to study these metrics in the utmost generality which is available to us. 

From now on, we assume that $K$ is a non-archimedean field which means in this paper that $K$ is a field endowed with a non-trivial non-archimedean complete absolute value. 
We denote the valuation ring by $\kcirc$.

We first restrict our attention to the case of a line bundle $L$ on a proper scheme $X$ over $K$. 
We call a metric $\metr$ on $L^\an$ \emph{algebraic} (resp. \emph{formal})  
if it is induced by a   line bundle $\Lcal$ on a flat proper scheme $\Xcal$  
(resp. a line bundle $\fL$ on an admissible formal scheme $\fX$) over $\kcirc$  with generic fibre $X$  and with $L=\Lcal|_X$. 
We use the notation $\metr = \metr_\Lcal$. 
Such a metric is called \emph{semipositive} if $\Lcal$ (resp. $\fL$) 
restricts to a nef line bundle on the special fibre of $\Xcal$ (resp. $\fX$).
More generally, we call $\metr$ a {\it model metric}  if there is a non-zero $k \in \N$ such that $\metr^{\otimes k}$ is an algebraic metric. 
Then a model metric $\metr$ is called {\it semipositive} if $\metr^{\otimes k}$ is semipositive in the previous sense. 
We say that $\metr$ is a {\it continuous semipositive metric} if it is the uniform limit of a sequence of semipositive model metrics on $\Lan$. 

We note that the above definitions are global definitions. 
It is desirable to have local analytic definitions. 
Let $V$ be a paracompact strictly $K$-analytic space and $L$ a line bundle on $V$.
First, we say that a metric $\metr$ on $L$ is a {\it piecewise linear metric} if there is a ${\rm G}$-covering $(V_i)_{i \in I}$ of $V$ 
(i.e. a covering with respect to the {\rm G}-topology on $V$) and frames $s_i$ of $L$ over $V_i$ with $\|s_i\|\equiv 1$. 
Note that such metrics are already considered in \cite{gubler98}, but they were called formal there which is a bit confusing. 
We say that a metric $\metr$ is \emph{piecewise $\Q$-linear} if there is a ${\rm G}$-covering $(V_i)_{i \in I}$ of $V$ and some integers $(k_i)_{i\in I}$ such that for each $i\in I$, the restriction of $\metr^{\otimes k_i}$ to $V_i$ is a piecewise linear metric on $V_i$. 
We refer to Section \ref{section formal and piecewise} for details and  properties.

Following a suggestion of Tony Yue Yu, we call a piecewise linear metric $\metr$ {\it semipositive} at 
 $x  \in V$
if $x$ has  a strictly $K$-affinoid domain 
$W$ of $V$
as a neighborhood (in the Berkovich topology) such that the restriction of $\metr$ to 
$L|_W$
is a semipositive formal metric. 
This notion was studied in \cite{GK2} for $K$ algebraically closed. 
A {\it semipositive piecewise linear metric} on $L$ is a piecewise linear metric which is semipositive at every 
$x \in V$. Semipositive metrics are studied in Section \ref{section Semipositive metrics}.
We highlight here the following result which is useful in comparing the various definitions  mentioned above.

\begin{thm} \label{comparison}
The following are equivalent for a metric $\metr$ on the line bundle $L^\an$ over a proper scheme $X$:
\begin{itemize}
\item[(a)] $\metr$ is an algebraic metric;
\item[(b)] $\metr$ is a formal metric;
\item[(c)] $\metr$ is a piecewise linear metric.
\end{itemize}
The equivalence remains true if we replace ``metric'' by ``semipositive metric'' in every item. 
\end{thm}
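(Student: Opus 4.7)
The plan is to cycle through the implications $(a) \Rightarrow (b) \Rightarrow (c) \Rightarrow (b) \Rightarrow (a)$, then upgrade to the semipositive setting by tracking the special fibre through every step.

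For $(a) \Rightarrow (b)$, given an algebraic model $(\Xcal,\Lcal)$, I would pass to the formal completion $\widehat{\Xcal}$ along the special fibre. Since $\Xcal$ is proper and flat over $\kcirc$, the formal completion is admissible and its generic fibre is $\Xan$ (by the comparison between algebraic and analytic generic fibres for proper $\kcirc$-schemes). The line bundle $\Lcal$ completes to a formal line bundle $\fL$ inducing the same metric. For $(b) \Rightarrow (c)$, choose a finite affine formal open cover $(\fU_i)$ of $\fX$ on which $\fL$ is trivial with frames $s_i$; the generic fibres $V_i = \fU_i^\an$ are strictly affinoid domains forming a G-covering of $\Xan$, and by definition of the formal metric we have $\|s_i\| \equiv 1$, so the metric is piecewise linear.

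For $(c) \Rightarrow (b)$, one exploits that $X$ is proper, so $\Xan$ is compact and any G-covering admits a finite refinement by strictly $K$-affinoid domains $V_i$ that trivialise $L$ with frames of norm $1$. Each $V_i$ has a canonical formal model $\Spf \Acal_i^\circ$ with the trivial line bundle, and the transition conditions $s_i/s_j$ have norm $1$, hence extend to formal unit transition functions on the overlaps. Raynaud's theorem then assembles these local pieces into a global admissible formal model $\fX$ with a formal line bundle $\fL$ inducing $\metr$. For $(b) \Rightarrow (a)$, the problem is to algebraise the formal model $(\fX,\fL)$; on a proper scheme this is available, essentially because any formal model $\fX$ of $\Xan$ is dominated by the formal completion of some proper flat $\kcirc$-model $\Xcal$ of $X$ (proved in the relevant preceding section of the paper using Nagata compactification and Raynaud's relative algebraisation, following the references cited in the introduction), after which $\fL$ pulls back and algebraises along with $\Xcal$ as the special fibre is projective enough.

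For the semipositive strengthening, the implication $(a) \Rightarrow (b)$ is immediate because $\widehat{\Xcal}$ and $\Xcal$ share the same special fibre, so nefness of the restriction of $\Lcal$ is the same as nefness of $\fL$. The implication $(b) \Rightarrow (c)$ follows from the local-to-global definition of semipositive piecewise linear metrics: for each $x \in \Xan$ one takes the strictly affinoid neighbourhood arising from an affine formal open of $\fX$ containing the reduction of $x$, on which the restriction of $\metr$ is by construction a semipositive formal metric. The delicate step is $(c) \Rightarrow (a)$: pointwise semipositivity gives, by compactness of $\Xan$, a finite covering by strictly affinoid formal models on each of which the metric is formally semipositive, but a common formal refinement need not preserve nefness of each local model simultaneously. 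The resolution is to use that the algebraisation produced above is flat and proper, so the nef cone of its special fibre can be tested by intersecting with curves; the local semipositivity data, combined with the projection formula under formal blow-ups and the fact that nefness is preserved under pullback to formal refinements, forces the global algebraic model to be nef on its special fibre. The main obstacle will therefore be this gluing of locally nef formal models into a globally nef algebraic model, and this is the point where I would lean most heavily on the theorems about formal and algebraic nefness developed in the preceding sections.
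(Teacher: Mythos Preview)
Your cycle of implications is mostly sound, but there is a genuine gap in $(c)\Rightarrow(b)$, and your treatment of the semipositive direction $(c)\Rightarrow(b)$ misses the key geometric idea.

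For $(c)\Rightarrow(b)$ you write that each strictly $K$-affinoid $V_i=\Mcal(\Acal_i)$ has a canonical formal model $\Spf\Acal_i^\circ$, and that since the transition functions $s_i/s_j$ have norm $1$ they become formal units, after which Raynaud's theorem glues. This fails for general $K$: the ring $\Acal_i^\circ$ need not be topologically of finite type over $\kcirc$, so $\Spf\Acal_i^\circ$ is not an admissible formal scheme in general. This is precisely why the paper says the equivalence ``was known before only in case of a compact reduced space over an algebraically closed field'' and that ``neither base change nor the old argument can be used''. The paper's Proposition \ref{formal vs piecewise linear metrics} proceeds differently: it reduces to $L=\Ocal_V$, picks arbitrary admissible $\kcirc$-algebras $A_i$ with $A_i\otimes_\kcirc K=\Acal_i$, and then performs an inductive sequence of admissible formal blow-ups to force each overlap to lie over a formal model on which both $f_{ij}$ and $f_{ji}$ are regular (hence $f_{ij}$ is a unit). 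Raynaud's theory enters not as a gluing device but through the existence of these blow-ups. Your sketch would need this machinery to go through over an arbitrary non-archimedean $K$.

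For the semipositive $(c)\Rightarrow(b)$, your description (``gluing of locally nef formal models into a globally nef algebraic model'' via projection formula) does not isolate a working mechanism. The paper's Proposition \ref{local vs global semipositivity} argues pointwise on curves: given a proper curve $C\subset\fV_s$, choose $x\in\Xan$ reducing to the generic point of $C$; local semipositivity yields a compact strictly $K$-analytic neighbourhood $W$ of $x$ with a nef formal model. The crucial step is that $\Xan$ is \emph{boundaryless}, so $x$ is an interior point of $W$, and by \cite[Lemma 6.5.1]{chambert-loir-ducros} this forces the closure of $\pi(x)$ in the special fibre of (a suitable refinement of) $\fW$ to be proper over $\ktilde$ --- hence this closure is $C$ itself, and nefness on $\fW$ gives $\deg_\fL(C)\geq 0$. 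Without invoking boundarylessness you have no control over which curves the local nef data actually sees. Note also that the easy direction $(b)\Rightarrow(c)$ in the semipositive case is simpler than you suggest: since $\Xan$ is compact, you may take $W=\Xan$ itself as the neighbourhood of any point.
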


As seen in Remark \ref{algebaic vs formal metric}, the equivalence of (a) and (b) follows from 
\cite[Proposition 8.13]{GK1} (as the argument does not use the assumption that $K$ is algebraically closed). The equivalence of (b) and (c) holds more generally over any paracompact strictly $K$-analytic space as shown in Proposition \ref{formal vs piecewise linear metrics}. This equivalence was known before only in case of a compact reduced space over an algebraically closed field. Neither base change nor the old argument can be used and so we give an entirely new argument here. In the semipositive case, the equivalence of (a) and (b) follows immediately from Proposition \ref{model test}.  Finally, the equivalence of (b) and (c) is shown in Proposition \ref{local vs global semipositivity}. It holds more generally for a {separated} paracompact strictly $K$-analytic space.

We also prove the following result (Theorem \ref{theorem density uniform convergence false}) which generalizes \cite[Theorem 7.12]{gubler98} from the compact to the paracompact case.

\begin{thm}
\label{theorem density}
Let $V$ be a paracompact strictly $K$-analytic space with a line bundle $L$.
If $\| \ \|$ is a continuous metric on $L$, then there is a sequence $(\| \ \|_n)_{n\in \N}$ of piecewise $\Q$-linear metrics on $L$ which converges uniformly to $\| \ \|$.
\end{thm}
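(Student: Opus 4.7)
The plan is to reduce the paracompact case to the compact density result \cite[Theorem~7.12]{gubler98} via a passage to real-valued continuous functions followed by an exhaustion-plus-gluing argument. First, exhibit a global piecewise linear reference metric $\metr_0$ on $L$: such a metric always exists, since by definition of piecewise linearity one may pick any G-covering $(U_i)_{i\in I}$ of $V$ by strictly $K$-affinoid domains over which $L$ admits frames $s_i$ and simply declare $\|s_i\|_0\equiv 1$. Setting $u:=\log(\metr_0/\metr)\in C(V,\R)$, the theorem becomes the statement that the real-valued functions $v$ for which $\metr_0 e^{-v}$ is piecewise $\Q$-linear, which I will call \emph{piecewise $\Q$-linear functions} on $V$, are uniformly dense in $C(V,\R)$.

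Next, since $V$ is paracompact and locally compact Hausdorff, each connected component is $\sigma$-compact; treating each component separately, exhaust it by an increasing sequence of compact strictly $K$-analytic subdomains $\Omega_1\subset\Omega_2\subset\cdots$ with $\Omega_n\subset\Interior(\Omega_{n+1})$ and $\bigcup_n\Omega_n=V$. Fix $\epsilon>0$. By the compact case applied on $\Omega_n$, obtain a piecewise $\Q$-linear function $w_n$ on $\Omega_n$ with $\sup_{\Omega_n}|u-w_n|<2^{-n-2}\epsilon$. Then build inductively piecewise $\Q$-linear functions $v_n$ on $\Omega_n$ such that $v_n|_{\Omega_{n-1}}=v_{n-1}$ and $\sup_{\Omega_n}|u-v_n|<\epsilon(1-2^{-n})$: given $v_n$, produce $v_{n+1}$ by splicing $v_n$ with $w_{n+1}$ on the annular shell $\Omega_{n+1}\setminus\Interior(\Omega_n)$, exploiting that $|v_n-w_{n+1}|$ is already small on $\Omega_n$ so that only a tiny correction is needed near the inner boundary. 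The resulting sequence stabilizes on each compact subset and yields a global piecewise $\Q$-linear function $v$ with $\sup_V|u-v|\le\epsilon$.

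The main obstacle is the splicing step, and it is entirely technical: piecewise $\Q$-linear functions form neither an $\R$-vector space nor a module over $C(V)$, so the customary partition-of-unity interpolation between $v_n$ and $w_{n+1}$ is not available. One must instead rely on the two operations that do preserve the class after refining the underlying G-covering, namely $\min$/$\max$ and $\Q$-linear combinations, and use them to interpolate across the shell with uniform error controlled at each step. Arranging the exhaustion $(\Omega_n)$ so that the boundaries $\partial\Omega_n$ can be cleanly separated by a piecewise $\Q$-linear function, for instance by choosing $\Omega_n$ as sublevel sets of appropriate analytic functions, makes this splice-and-clip construction tractable and constitutes the technical heart of the generalization from the compact to the paracompact case.
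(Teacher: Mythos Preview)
Your reduction to approximating continuous real-valued functions is the right move, but the justification for the reference metric is incorrect. Given an arbitrary ${\rm G}$-covering $(U_i)$ with frames $s_i$, declaring $\|s_i\|_0\equiv 1$ does \emph{not} define a metric: on $U_i\cap U_j$ the transition $s_i/s_j$ is an invertible analytic function whose absolute value has no reason to be identically $1$, so the prescriptions on overlaps are incompatible. The existence of a piecewise linear reference metric is a genuine (if standard) input: it comes from the existence of a formal $\kcirc$-model $(\fV,\fL)$ of $(V,L)$, which is what the paper invokes. This is fixable by citing the right fact, but the argument you wrote does not establish it.

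The more serious gap is the splicing step, which you correctly identify as the crux but do not carry out. You need $v_{n+1}$ to \emph{agree exactly} with $v_n$ on $\Omega_{n-1}$ while staying $\epsilon$-close to $u$ on all of $\Omega_{n+1}$, using only $\min$, $\max$, and $\Q$-linear combinations. This requires at minimum (i) extending $v_n$ from $\Omega_n$ to a piecewise $\Q$-linear function on $\Omega_{n+1}$ (already non-trivial---you need an extension lemma), and (ii) constructing an auxiliary piecewise $\Q$-linear ``separating'' function across the shell so that a median-type formula picks out $v_n$ on the inside and $w_{n+1}$ on the outside. Neither of these is supplied, and your remark about choosing $\Omega_n$ as sublevel sets of analytic functions does not by itself produce such a separator with the required properties. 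Without these ingredients the inductive step does not go through.

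By contrast, the paper avoids the exhaustion-and-splice framework entirely. It takes a locally finite ${\rm G}$-covering $(W_i)_{i\in I}$ with $W_i\subset U_i^\circ$ for compact strictly $K$-analytic domains $U_i$, reduces to $f\ge 0$, and builds the approximation as a locally finite \emph{sum} $\varphi=\sum_i\varphi_i$ of nonnegative piecewise $\Q$-linear functions with $\supp(\varphi_i)\subset U_i$. Each $\varphi_{n+1}$ is constructed from the compact-case approximation on $W_{n+1}$, extended with controlled support (an extension lemma), and then clipped via $\max(\cdot,0)$ and $\min$ against a global nonnegative approximant of $f-\sum_{i\le n}\varphi_i$ to enforce the two-sided bounds $f-\varepsilon\le\sum_{i\le n+1}\varphi_i\le f$ on $\bigcup_{i\le n+1}W_i$ and $\sum_{i\le n+1}\varphi_i\le f$ globally. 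This sidesteps the need to make anything agree exactly on an inner region, which is precisely the obstruction in your scheme.
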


{Let us come back to semipositive metrics. 
For this, let us consider $X$ a proper scheme over $K$.}
It is a natural question if the notion of semipositivity is closed in the space of model metrics of a given line bundle $L$ of $X$. 
First, we look at this question for uniform convergence of metrics. 
We consider a model metric $\metr$ on $L^\an$ which is semipositive as a continuous metric, which means by definition that it is uniform limit of semipositive model metrics on $L^\an$. 
Then the closedness problem is equivalent to show that $\metr$ is semipositive as a model metric. 
By passing to a tensor power, we may assume that $\metr = \metr_\Lcal$ for a  line bundle $\Lcal$ on a model $\Xcal$ of $X$. By assumption, $\metr$ is the uniform limit of semipositive model metrics $\metr_n$ on $L^\an$. 
For every $n \in \N$, there is a non-zero $k_n \in \N$ such that $\metr_n^{\otimes k_n}$ is an algebraic metric associated with a nef line bundle $\Lcal_n$ living on a proper flat scheme $\Xcal_n$ over $\kcirc$ with generic fiber $X$. Since the models $\Xcal_n$ might be completely unrelated to $\Xcal$, it is non-obvious to show that $\Lcal$ is nef if all the line bundles $\Lcal_n$ are nef. 

An even more challenging problem is to show that the space of model metrics is closed with respect to pointwise convergence. The solution of this problem is the main result of this paper:

\begin{thm}  \label{pointwise convergence of metrics}
Let $X$ be a proper scheme over $K$ with a line bundle $L$. We assume that the model metric $\metr$ on $L^\an$ is a pointwise limit of semipositive model metrics on $L^\an$. Then $\metr$ is a semipositive model metric.
\end{thm}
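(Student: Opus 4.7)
The plan is to reduce semipositivity of $\metr$ to a nefness statement on the special fibre of a well-chosen projective model, test it via Kleiman's criterion on curves, and extract the degree inequality from pointwise convergence by comparing intersections on common dominating models.

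After replacing $\metr$ by a suitable tensor power, which preserves the conclusion, we may write $\metr=\metr_\Lcal$ for a line bundle $\Lcal$ on a proper flat $\kcirc$-model $\Xcal$ of $X$. Combining Chow's lemma with Raynaud's theory of admissible blowups, we may assume that $\Xcal$ is projective. By Theorem~\ref{comparison} together with the definition of semipositive model metrics, it then suffices to show that $\Lcal|_{\Xcal_s}$ is nef; by Kleiman's criterion this reduces to $\deg(\Lcal|_C)\ge 0$ for every closed integral curve $C\subset\Xcal_s$.

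Fix such a $C$. For each $n$, write $\metr_n^{\otimes k_n}=\metr_{\Lcal_n}$ with $\Lcal_n$ nef on a projective model $\Xcal_n$. By Raynaud's theory there is a projective model $\Xcal'_n$ dominating both $\Xcal$ and $\Xcal_n$ via morphisms $\pi\colon\Xcal'_n\to\Xcal$ and $\pi_n\colon\Xcal'_n\to\Xcal_n$; after further admissible blowups the strict transform $C'_n\subset\Xcal'_n$ of $C$ under $\pi$ is well defined and $\pi|_{C'_n}\colon C'_n\to C$ is birational. Pullback preserves nef on proper schemes, so $\pi_n^*\Lcal_n$ is nef on $\Xcal'_{n,s}$. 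Writing $k_n\pi^*\Lcal=\pi_n^*\Lcal_n+D_n$ in $\Pic(\Xcal'_n)_\Q$ for a vertical $\Q$-Cartier divisor $D_n$ whose associated model function on $\Xan$ is $\phi_n:=\log\bigl(\metr^{\otimes k_n}/\metr_n^{\otimes k_n}\bigr)$ (tending to $0$ pointwise on $\Xan$ by hypothesis), the projection formula combined with nefness of $\pi_n^*\Lcal_n$ gives
\[
k_n\deg(\Lcal|_C)\;=\;\pi_n^*\Lcal_n\cdot C'_n+D_n\cdot C'_n\;\ge\;D_n\cdot C'_n,
\]
so it suffices to prove $\liminf_n D_n\cdot C'_n\geq 0$.

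The main obstacle is this last step. Decomposing $D_n=\sum_i a_{n,i}E_{n,i}$ over the irreducible components $E_{n,i}$ of $\Xcal'_{n,s}$ of multiplicities $m_{n,i}$, one has $a_{n,i}=m_{n,i}\phi_n(x_{E_{n,i}})$ at the divisorial points $x_{E_{n,i}}\in\Xan$. The unique component $E_{n,0}\supset C'_n$ contributes the only possibly negative intersection $E_{n,0}\cdot C'_n\le 0$, while $E_{n,i}\cdot C'_n\ge 0$ for $i\neq 0$. Combining this with the relation $\sum_i m_{n,i}(E_{n,i}\cdot C'_n)=0$ (triviality of the vertical fibre class on the vertical curve $C'_n$), one rewrites
\[
D_n\cdot C'_n=\sum_{i\neq 0}m_{n,i}\bigl(\phi_n(x_{E_{n,i}})-\phi_n(x_{E_{n,0}})\bigr)(E_{n,i}\cdot C'_n)
\]
as a weighted sum of non-negative intersections. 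A uniform bound on $\sum_{i\neq 0}m_{n,i}(E_{n,i}\cdot C'_n)$ in terms of fixed data on $(\Xcal,C)$ can be obtained by intersecting with the pullback of an ample divisor on $\Xcal$ (using $\pi^*H\cdot C'_n=H\cdot C$). The real difficulty, and the technical heart of the proof, is that the divisorial points $x_{E_{n,i}}$ depend on $n$: one needs to refine the pointwise convergence of $\phi_n$ into simultaneous control over the differences $\phi_n(x_{E_{n,i}})-\phi_n(x_{E_{n,0}})$ for all relevant indices, presumably via an approximation or diagonal argument that tracks where these divisorial points can cluster in $\Xan$ as the auxiliary models vary.
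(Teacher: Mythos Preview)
Your setup is reasonable and you have correctly isolated the obstruction, but the proposal is genuinely incomplete at exactly the point you flag: the divisorial points $x_{E_{n,i}}$ move with $n$, and pointwise convergence of $\phi_n$ on $\Xan$ gives no control whatsoever at a varying family of points. No diagonal or clustering argument will fix this, because the space $\Xan$ is not sequentially compact in any useful sense here and the number of components of $(\Xcal'_n)_s$ is unbounded. Your claimed uniform bound on $\sum_{i\neq 0} m_{n,i}(E_{n,i}\cdot C'_n)$ via $\pi^*H$ is also not justified: that intersection number has no evident relation to $H\cdot C$, and in fact grows with the complexity of the blow-up $\Xcal'_n$. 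So as written the argument stops before the essential step.

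The paper circumvents this by never letting the evaluation points move. Two ingredients replace your varying-model comparison. First, a maximum principle for $\theta$-psh model functions (Proposition~\ref{theta-psh and skeleton}): if $\theta$ is determined on a fixed model $\Xcal$, then $\sup_{\Xan}\psi$ equals the supremum over the \emph{finitely many} divisorial points of $\Xcal$. This converts pointwise convergence at a fixed finite set into a global upper bound, and feeds into an approximation of $\metr$ by base ideals $\mathfrak b_m$ of $g_m^*\Lcal^{\otimes m}$ (Lemma~\ref{Lemma 5.12-first step}). Second, Goodman's blow-up trick (Lemma~\ref{Lemma 5.12-second step}): instead of dominating $\Xcal$ and $\Xcal_n$ simultaneously, one blows up $\Xcal$ in the \emph{fixed} curve $C$ to obtain $\pi\colon\Xcal'\to\Xcal$ with exceptional divisor $E$, and reduces to $\deg_{\pi^*\Lcal}(\Hcal'^{\,n-1}\cdot E)\ge 0$. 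The further auxiliary models $\Xcal_m$ used to unwind the base ideals do vary with $m$, but the crucial observation is that every divisorial point $\xi_W$ of $\Xcal_m$ appearing in the computation reduces, under $\psi_m\colon\Xcal_m\to\Xcal'$, to the generic point of some component of $(\Xcal')_s$, hence lies in the \emph{fixed} finite set $S$ of divisorial points of $\Xcal'$. This is what makes pointwise control at $S$ sufficient, and it is precisely the mechanism your approach lacks.
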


If {$K$ is discretely valued of  residue characteristic zero} and if $X$ is a smooth projective variety, then this theorem was proven by Boucksom, Favre and Jonsson \cite[Theorem 5.11]{BFJ1} using multiplier ideals. 
They said in \cite{BFJ1} Remark 5.13 that it would be interesting to have a proof along the lines of Goodman's paper \cite[p.178, Proposition 8]{Goo69}. 
This is what we provide in Theorem \ref{pointwise convergence of metrics} with a proof holding for any  non-archimedean field and hence we obtain as an immediate consequence:

\begin{cor} \label{compatibility of semipositive metrics}
A model metric is semipositive as a model metric if and only if it is semipositive as a continuous metric.
\end{cor}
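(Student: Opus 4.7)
The plan is to observe that this corollary is essentially a direct consequence of Theorem \ref{pointwise convergence of metrics} applied in the special case of uniform limits.

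First I would dispatch the easy direction. If $\metr$ is semipositive as a model metric, then taking the constant sequence $\metr_n := \metr$ exhibits $\metr$ as a uniform limit of semipositive model metrics on $L^\an$, which by the definition recalled in the introduction is exactly what it means to be semipositive as a continuous metric.

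For the converse, suppose $\metr$ is a model metric on $L^\an$ which is semipositive as a continuous metric. By definition there exists a sequence $(\metr_n)_{n \in \N}$ of semipositive model metrics on $L^\an$ that converges uniformly to $\metr$. Uniform convergence on $\Xan$ implies pointwise convergence, hence $\metr$ is a pointwise limit of semipositive model metrics. Theorem \ref{pointwise convergence of metrics} then applies verbatim and yields that $\metr$ is a semipositive model metric. This completes the equivalence.

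Since the argument reduces to a one-line appeal to Theorem \ref{pointwise convergence of metrics}, there is no genuine obstacle at the level of the corollary itself; all of the real work has been done in proving the pointwise convergence statement. The only thing worth flagging is that one must verify that the definitions from the introduction line up exactly (the notion of semipositive model metric for model metrics whose tensor powers are algebraic, versus the notion defined via uniform limits of semipositive model metrics), but these compatibilities are built into the setup preceding the statement.
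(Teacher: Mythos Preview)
Your argument is correct and matches the paper's approach: the corollary is presented there as an immediate consequence of Theorem \ref{pointwise convergence of metrics}, with the trivial direction given by the constant sequence and the nontrivial one by observing that uniform convergence implies pointwise convergence.
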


For arbitrary non-archimedean fields, this result was first proven in \cite[Proposition 8.13]{GK2} using a lifting theorem for closed subvarieties of the special fibre. Amaury Thuillier told us that he found a similar (unpublished) lifting argument to prove Corollary \ref{compatibility of semipositive metrics}. 

Theorem \ref{pointwise convergence of metrics} will follow from Theorem \ref{pointwise convergence vs uniform convergence} which is a slightly more general version about pointwise convergence of $\theta$-plurisubharmonic model functions for a closed $(1,1)$-form $\theta$. 
These notions from \cite{BFJ1} will be introduced in Section \ref{section psh model functions}. 
In Theorem \ref{pointwise convergence of metrics} and in Theorem \ref{pointwise convergence vs uniform convergence}, it is enough to require pointwise convergence over all divisorial points of $\Xan$. Such points will be introduced and studied in Appendix \ref{appendix on divisorial points}.

\subsection{Terminology} 
For sets,  in $A\subset B$  equality is not excluded and $A\setminus B$ denotes the complement of $B$ in $A$. 
$\N$ includes $0$. 
All the rings and algebras are commutative with unity. For a ring $A$, the group of units is denoted by $A^\times$. 
If $V$ is a topological space, for a set $U\subset V$ we denote by $U^\circ$ the topological interior of $U$ in $V$.
A variety over a field $k$ is an irreducible and reduced scheme which is separated and of finite type over $k$. 

For the rest of the paper we fix a {\it non-archimedean field} $K$. 
This means here that the field $K$ is equipped with a non-archimedean absolute value $| \ | : K \to \R_+$  which  is complete and non-trivial. Let $v:=-\log |\phantom{a}|$ be the corresponding valuation. 
We have a valuation ring $K^\circ:=\{x\in K\mid v(x)\geq 0\}$ with maximal ideal $K^{\circ \circ}:=\{x\in K\mid  v(x)>0\}$ and residue field $\tilde{K}:=K^\circ / K^{\circ \circ}$. 
We set $\Gamma \coloneqq v(K^\times)$. 
It is a subgroup of $(\R, +)$ called the \emph{value group} of $K$. 
We denote by $\overline{K}$ an algebraic closure of $K$ 
and we set $\C_K$ for the completion of $\overline{K}$. 
It is a minimal algebraically closed non-archimedean field extension of $K$ \cite[\S  3.4.1]{bgr}.

\subsection{Acknowledgements} 
We thank Vladimir Berkovich, Antoine Ducros and Tony Yue Yu for helpful discussions. 
We thank Ofer Gabber for thoroughly answering a question posed by email and we thank the referees for their helpful comments. 
This work was supported by the collaborative research 
center SFB 1085 funded by the Deutsche Forschungsgemeinschaft.

\section{Formal and piecewise linear  metrics}
\label{section formal and piecewise}
For line bundles on paracompact strictly $K$-analytic spaces, we will introduce the global notion of  formal metrics and the local notion of  piecewise linear metrics. We will collect many properties and we will show that both notions agree. At the end, we will prove a density result for piecewise $\Q$-linear metrics.

\begin{art} \label{algebraic models}
Let $X$ be a proper scheme over $K$. Then an {\it algebraic  $\kcirc$-model} of $X$ is a proper flat scheme $\Xcal$ over $\kcirc$ with a fixed isomorphism from the generic fiber $\Xcal_\eta$ to $X$. Usually, we will identify $\Xcal_\eta$ with $X$ along this fixed isomorphism. 
It follows from Nagata's compactification theorem \cite[Theorem 4.1]{Con07}
that an algebraic $\kcirc$-model of $X$ exists. 
The set of isomorphism classes of algebraic $\kcirc$-models of $X$ is partially ordered by morphisms of $\kcirc$-models of $X$ (where by definition such a map extends the identity on $X$). A diagonal argument shows easily that the set of isomorphism classes is directed with respect to this partial order. 

Let $L$ be a line bundle on $X$. An  {\it algebraic $\kcirc$-model} $(\Xcal, \Lcal)$ of $(X,L)$ consists of  an algebraic $\kcirc$-model $\Xcal$ of $X$ and of a line bundle $\Lcal$ on $\Xcal$ with a fixed isomorphism from $\Lcal|_X$ to $L$ which we use again for identification.

It follows from Vojta's version of Nagata's compactification  theorem \cite[Theorem 5.7]{vojta-2007}  and noetherian approximation that $(X,L)$ has always an algebraic $\kcirc$-model. 
Alternatively, one can use the non-noetherian version of Nagata's compactification theorem \cite[Theorem 4.1]{Con07} to get 
an algebraic $K^\circ$-model $\Xcal$ of $X$, then by \cite[Tag 01PI]{stacks-project} one can extend $L$ to an $\mathcal{O}_{\Xcal}$-module of finite presentation $\mathcal{F}$, 
and finally by \cite[Th\'{e}or\`{e}me 5.2.2]{RG71}, 
replacing $\Xcal$  by a dominating $\kcirc$-model, one can ensure that $\mathcal{F}$ is flat, hence a line bundle on $\Xcal$.
\end{art}

\begin{art} \label{formal models}
Let $V$ be a paracompact  strictly $K$-analytic space. We use here the analytic spaces and the terminology introduced by Berkovich in \cite[Section 1]{berkovich-ihes}. 
Then a {\it formal $\kcirc$-model} is an admissible  formal scheme $\fV$ over $\kcirc$ \cite[\S 7.4]{bosch-lectures-2015} with a fixed isomorphism $\fV_\eta \cong V$ on the generic fiber $\fV_\eta$ which we again use for identification. 
Note that we have a canonical reduction map $\pi:V \to \fV_s$ to the special fiber $\fV_s$ (see \cite[Section 2]{GRW2}). 
We say that a covering $(V_i)_{i\in I}$ of $V$ is of \emph{finite type} if for each  $i\in I$, the intersection $V_i \cap V_j$ is nonempty only for finitely many $j \in I$.

The category of paracompact strictly $K$-analytic spaces is equivalent  to  the category of  quasiseparated rigid analytic varieties over $K$ with a strictly $K$-affinoid ${\rm G}$-covering of finite type (see 
\cite[\S 1.6]{berkovich-ihes}) 
and hence we may apply Raynaud's theorem from \cite[Theorem 8.4.3]{bosch-lectures-2015}. 
In particular, we see that a formal $\kcirc$-model of $V$ exists and that the set of isomorphism classes of formal $\kcirc$-models is again directed. 
Some of the references in the following require that $V$ is compact, because the original formulation of Raynaud's theorem in \cite[Theorem 4.1]{bosch-luetkebohmert-1} used that the underlying rigid space is quasicompact and quasiseparated. 
This will be bypassed by using the more general version in  \cite[Theorem 8.4.3]{bosch-lectures-2015} for paracompact $V$ (remember that paracompact includes Hausdorff).

{We always consider the ${\rm G}$-topology on $V$ induced by the strictly $K$-affinoid domains in $V$ (see \cite[\S 1.6]{berkovich-ihes}). The ${\rm G}$-topology is finer than the Berkovich topology and the strictly $K$-affinoid domains may be seen as the basic open subsets of the ${\rm G}$-topology while they are compact in the Berkovich topology of $V$. There is a canonical structure sheaf $\Ocal_{X_{\rm G}}$ on the ${\rm G}$-topology of $V$ such that for every strictly $K$-affinoid domain $W$ of $V$ the corresponding strictly $K$-affinoid algebra is $\Ocal_{X_{\rm G}}(W)$.
A ${\rm G}$-covering $(V_i)_{i \in I}$ is called of {\it finite type} if for every $i \in I$ there are only finitely many $j \in I$ with $V_i \cap V_j \neq \emptyset$.}

Let $L$ be a line bundle on $V$ which means that $L$ is a locally free sheaf of rank $1$ on the ${\rm G}$-topology. 
%\green{We always consider the ${\rm G}$-topology induced by the strictly $K$-affinoid domains in $V$.}  
A {\it formal $\kcirc$-model} $(\fV,\fL)$ of $(V,L)$ consists of a formal $\kcirc$-model $\fV$ of $V$ and a line bundle $\fL$ on $\fV$ with a fixed isomorphism from $\fL|_V$ to $L$ which we use for identification. 
By \cite[Proposition 6.2.13]{chambert-loir-ducros}, a formal $\kcirc$-model of $(V,L)$ always exists. 
\end{art}

\begin{rem} \label{analytification}
If $X$ is a proper scheme over $K$ with a line bundle $L$, then we denote the analytifications by $\Xan$ and $\Lan$ (in the category of Berkovich spaces). By formal completion,  every algebraic $\kcirc$-model $(\Xcal,\Lcal)$ of $(X,L)$ induces a formal $\kcirc$-model $(\hat{\Xcal},\hat{\Lcal})$ of $(\Xan,\Lan)$. 
Note that the special fiber $\Xcal_s$ of $\Xcal$ is canonically isomorphic to the special fiber of the formal completion $\hat{\Xcal}$ and hence the above yields a reduction map $\pi:\Xan \to \Xcal_s$. 
\end{rem}

\begin{lem}
\label{lemma formal vs algebraic}
Let $X$ be a proper scheme over $K$ and let $\fX$ be a formal $K^\circ$-model of $X^{\an}$.
Then there exists  an algebraic $K^\circ$-model $\Xcal$ of $X$ such that $\hat{\Xcal}$ dominates $\fX$.
\end{lem}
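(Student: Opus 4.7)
The plan is to realize $\fX$ as dominated by an admissible formal blow-up of the formal completion of some algebraic model, and to algebraize that blow-up. First, using the existence result recalled in \ref{algebraic models}, I fix an algebraic $\Ko$-model $\Xcal_0$ of $X$. Its formal completion $\hat{\Xcal}_0$ is a formal $\Ko$-model of $\Xan$. Since the set of isomorphism classes of formal $\Ko$-models of $\Xan$ is directed by Raynaud's theorem (cf.\ \ref{formal models}), I can find a third formal model $\fX'$ together with admissible formal blow-up maps $\fX' \to \hat{\Xcal}_0$ and $\fX' \to \fX$. It then suffices to construct an algebraic $\Ko$-model $\Xcal$ of $X$ whose formal completion recovers $\fX'$.

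Writing $\fX' \to \hat{\Xcal}_0$ as the admissible formal blow-up of a coherent open ideal $\hat{\mathcal{I}} \subset \mathcal{O}_{\hat{\Xcal}_0}$, one has $\pi^n \mathcal{O}_{\hat{\Xcal}_0} \subset \hat{\mathcal{I}}$ for some nonzero $\pi \in K^{\circ\circ}$ and some $n \geq 1$. The coherent quotient $\mathcal{O}_{\hat{\Xcal}_0}/\hat{\mathcal{I}}$ is killed by $\pi^n$, and on the topological space of the special fiber the structure sheaf $\mathcal{O}_{\hat{\Xcal}_0}/\pi^n$ agrees with $\mathcal{O}_{\Xcal_0}/\pi^n \mathcal{O}_{\Xcal_0}$. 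Via this identification I obtain a finitely generated coherent ideal $\mathcal{I} \subset \mathcal{O}_{\Xcal_0}$ containing $\pi^n$ with $\hat{\mathcal{I}} = \mathcal{I}\cdot \mathcal{O}_{\hat{\Xcal}_0}$.

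I then set $\Xcal$ to be the strict transform (scheme-theoretic closure of $X$) inside $\mathrm{Bl}_\mathcal{I}(\Xcal_0)$, which is projective over $\Xcal_0$ and hence proper over $\Ko$. By construction $\Xcal$ is flat over $\Ko$ with generic fiber canonically identified with $X$, since $\mathcal{I}|_X = \mathcal{O}_X$ (as $\pi$ is invertible on $X$). The $\pi$-adic completion of $\Xcal$ equals the admissible formal blow-up of $\hat{\Xcal}_0$ along $\hat{\mathcal{I}}$, namely $\fX'$, and this dominates $\fX$ by the choice of $\fX'$.

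The main obstacle is the algebraization step. When $\Ko$ is not noetherian one cannot apply Grothendieck's existence theorem directly. The rescue is that $\hat{\mathcal{I}}$ contains a power of $\pi$, so the data of $\hat{\mathcal{I}}$ amounts to the data of a coherent ideal on the thickening $\Xcal_0 \otimes_{\Ko} \Ko/(\pi^n)$, which lifts tautologically to $\Xcal_0$. Verifying rigorously this identification, together with the compatibility of the (strict-transform of the) Rees algebra construction with $\pi$-adic completion in the non-noetherian setting, will be the technically most delicate part of the argument.
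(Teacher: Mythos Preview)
Your approach follows the same architecture as the paper's proof: fix an algebraic model $\Xcal_0$, pass to a formal model that is an admissible blow-up of $\hat{\Xcal}_0$ and dominates $\fX$, then algebraize the blow-up center. The one substantive difference is in the algebraization of the open ideal $\hat{\mathcal{I}}$. The paper invokes the formal GAGA principle of Fujiwara--Kato \cite[Theorem I.10.1.2]{fujiwara-kato-1} to descend $\hat{\mathcal{I}}$ to a coherent vertical ideal on $\Xcal_0$. Your observation that, because $\pi^n \in \hat{\mathcal{I}}$, the ideal is already determined by its image in $\Ocal_{\hat{\Xcal}_0}/\pi^n = \Ocal_{\Xcal_0}/\pi^n\Ocal_{\Xcal_0}$ is a more elementary route to the same conclusion and avoids the heavy machinery; this is a genuine simplification.

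Two small remarks. First, you only need the map $\fX' \to \hat{\Xcal}_0$ to be an admissible formal blow-up; the map $\fX' \to \fX$ need only be a domination morphism, and that is exactly what Raynaud's theorem \cite[Lemma 8.4.4 (d)]{bosch-lectures-2015} provides after enlarging $\fX'$. Second, the ``technically most delicate part'' you flag---that the $\pi$-adic completion of the (torsion-free) algebraic blow-up recovers the admissible formal blow-up---is precisely \cite[Proposition 8.2.6]{bosch-lectures-2015}, which the paper cites and which is stated in the required non-noetherian generality; so no new work is needed there.
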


\begin{proof}
Let us fix  an algebraic $K^\circ$-model $\Xcal_0$ of $X$.
As recalled in \ref{formal models}, the set of 
isomorphism classes of formal $K^\circ$-models of $X^{\an}$ is a directed set, hence there exists a formal $\kcirc$-model 
$\fV$ which dominates both $\widehat{\Xcal}_0$ and $\fX$. 
Replacing $\fV$ by a larger formal $K^\circ$-model of $\Xan$, it follows from \cite[Lemma 8.4.4 (d)]{bosch-lectures-2015} that the canonical map $\varphi:\fV \to \widehat{\Xcal_0}$ may be assumed to be an admissible formal blowing up in an open coherent ideal $\mathfrak{b}$ of $\widehat{\Xcal_0}$.
Using the formal GAGA-principle  proved by Fujiwara--Kato \cite[Theorem I.10.1.2]{fujiwara-kato-1}, $\mathfrak b$ 
is actually the formal completion of a coherent vertical ideal $\mathfrak{a}$ on $\Xcal_0$.
Hence if $\Xcal$ is the vertical blowing up of $\Xcal_0$ in the ideal $\mathfrak{a}$, by 
\cite[Proposition 8.2.6]{bosch-lectures-2015}  we have $\fV \cong \hat{\Xcal}$ which dominates $\fX$.
\end{proof}

\begin{definition} \label{formal metric}
Let $(\fV, \fL)$ be a formal $\kcirc$-model of $(V,L)$ as in \ref{formal models}. Then we get an associated {\it formal metric} $\metr_\fL$ on $L$ uniquely determined by requiring $\|s\|_\fL=1$ on the generic fibre $W$ of any frame $s$ of $\fL$ over any formal open subset $\fW$ of $\fV$. This is well-defined because a change of frame involves an invertible function $f$ on $\fW$ and we have $|f|=1$ on $W$. 
\end{definition}

\begin{rem} \label{algebaic vs formal metric}
If $(\Xcal,\Lcal)$ is an algebraic $\kcirc$-model of $(X,L)$ as in  \ref{algebraic models}, 
then we get an associated {\it algebraic metric} $\metr_\Lcal$ on $\Lan$ by using the above construction 
for the formal $\kcirc$-model $(\hat{\Xcal},\hat{\Lcal})$ of $(\Xan,\Lan)$ from Remark \ref{analytification}. 
By construction, every algebraic metric is a formal metric. 
The converse is also true as  shown in \cite[Proposition 8.13]{GK1} (as the argument does not use the assumption that $K$ is algebraically closed).
\end{rem}

We have the following extension result from \cite[Proposition 5.11]{GK2}

\begin{prop} 
\label{extension of formal metrics}
Let $L$ be line bundle on a paracompact strictly $K$-analytic space $V$  and let $W$ be a compact strictly $K$-analytic domain of $V$. 
Then every formal metric  on the restriction of $L$ to $W$ extends to a formal metric on $L$.  
\end{prop}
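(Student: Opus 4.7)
The plan is to construct a formal $\kcirc$-model $(\fV,\fL)$ of $(V,L)$ such that $\fW$ sits as an open formal subscheme of $\fV$ and $\fL|_\fW$ induces the prescribed formal metric on $L|_W$; this will produce the desired extension $\metric_\fL$. The argument naturally splits into two geometric tasks: realizing $\fW$ as an open formal subscheme of a global formal model of $V$, and then extending the prescribed formal line bundle $\fL_W$ on $\fW$ to a formal line bundle on $\fV$.

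For the first task, I would start from an arbitrary formal $\kcirc$-model $\fV_0$ of $V$. Since $W$ is a compact strictly $K$-analytic domain of $V$, Raynaud's theorem in the form \cite[Theorem 8.4.3]{bosch-lectures-2015} gives a formal model $\fV_1$ dominating $\fV_0$ together with an open formal subscheme $\fW' \subset \fV_1$ with $\fW'_\eta = W$. The directedness of the set of formal $\kcirc$-models of $W$ (see \ref{formal models}) yields a common admissible blowup $\fW''$ of $\fW$ and $\fW'$. Extending the open coherent ideal defining $\fW'' \to \fW'$ by the unit ideal outside $\fW'$ produces an open coherent ideal on $\fV_1$, and blowing up $\fV_1$ in it gives a formal model $\fV$ of $V$ that contains $\fW''$ as an open formal subscheme. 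Replacing $\fW$ by $\fW''$ and $\fL_W$ by its pullback, which does not change the induced formal metric on $L|_W$, I may assume that $\fW$ is an open formal subscheme of $\fV$.

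For the second task, I would invoke \cite[Proposition 6.2.13]{chambert-loir-ducros} to fix some formal model of $L$; by passing to a common refinement (using admissible blowups of $\fV$ that are trivial above $\fW$, constructed by extending ideals as in the first step), I may assume this is realized by a line bundle $\fL_0$ on $\fV$. The line bundle $\Ncal := \fL_W \otimes (\fL_0|_\fW)^{-1}$ on $\fW$ has its generic fiber canonically trivialized by the identity on $L|_W$, so it suffices to extend $\Ncal$ to a line bundle $\widetilde{\Ncal}$ on $\fV$ whose generic fiber on $V$ is trivial: then $\fL := \fL_0 \otimes \widetilde{\Ncal}$ is a formal model of $L$ on $\fV$ whose restriction to $\fW$ is canonically isomorphic to $\fL_W$, and hence induces the prescribed formal metric. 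To extend $\Ncal$, I would use the canonical generic-fiber trivialization to pick a meromorphic section whose associated Cartier divisor $D$ is supported on the special fiber $\fW_s$; after a further admissible blowup of $\fV$ trivial above $\fW$, this vertical divisor extends to a vertical Cartier divisor $\widetilde{D}$ on $\fV$, and I set $\widetilde{\Ncal} := \Ocal_\fV(\widetilde{D})$.

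The main obstacle is the extension of the vertical Cartier divisor in the last step: a Cartier divisor on an open formal subscheme does not automatically extend to the ambient formal scheme, and the extension requires carefully chosen admissible blowups of $\fV$ trivial above $\fW$ in order to prolong local equations past the formal boundary of $\fW$ inside $\fV$. Compactness of $W$ guarantees that $\fW_s$ has only finitely many irreducible components contributing to $D$, so the task reduces to the finite combinatorial problem of extending each irreducible vertical component by its closure in $\fV_s$ after a suitable blowup. The remaining ingredients --- Raynaud's theorem, directedness of the set of formal models, and the reduction to line bundles with trivial generic fiber --- are essentially formal.
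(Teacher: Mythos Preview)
Your first task---realizing $\fW$ as a formal open subscheme of a global formal model $\fV$ of $V$---is handled correctly and matches the paper's approach, which simply cites \cite[Lemma 8.4.5]{bosch-lectures-2015} for this step.

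For the second task, the paper proceeds more directly: it extends the line bundle $\fL$ on $\fW$ to a coherent $\Ocal_\fV$-module $\Fcal$ on $\fV$ (using the extension-of-coherent-sheaves argument from \cite[Lemma 5.7]{bosch-luetkebohmert-1}, which holds in the paracompact case as noted in \cite[Proposition 6.2.13]{chambert-loir-ducros}), and then replaces $\fV$ by an admissible blowup on which $\Fcal$ becomes a line bundle. There is no reduction to the case of trivial generic fibre and no Cartier-divisor argument.

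Your route---twisting by a global formal model $\fL_0$ of $L$ to reduce to extending a vertical line bundle $\Ncal$, and then extending a vertical Cartier divisor $D$---can be made to work, but the justification you give for the crucial step is not adequate. Treating $D$ as a sum of irreducible vertical components and extending each by its closure in $\fV_s$ is a Weil-divisor argument; on a formal scheme with non-regular (or even non-normal) special fibre, closures of prime divisors need not be Cartier, and there is no reason a blowup ``trivial above $\fW$'' would remedy this component by component. The correct execution is: after twisting so that $D$ is effective, extend the coherent ideal sheaf $\Ocal_\fW(-D)$ to a coherent ideal sheaf $\Jcal$ on $\fV$, and then blow up $\fV$ in $\Jcal$ (which is an isomorphism over $\fW$ since $\Jcal|_\fW$ is already invertible). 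But extending a coherent ideal sheaf from $\fW$ to $\fV$ is precisely the Bosch--L\"utkebohmert extension lemma the paper invokes, so your approach ultimately needs the same key input, applied to a special case after an extra reduction step. The paper's direct method avoids both the reduction to trivial generic fibre and the Cartier-divisor detour.
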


\begin{proof} 
Since this is stated here under more general assumptions than in \cite[Proposition 5.11]{GK2}, we sketch the argument. 
Let $(\fW,\fL)$ be the formal $\kcirc$-model for the given formal metric on $L|_W$. We may assume that $\fW$ is a formal open subset of a formal $\kcirc$-model $\fV$ of $V$ \cite[Lemma 8.4.5]{bosch-lectures-2015}. By the argument in \cite[Lemma 5.7]{bosch-luetkebohmert-1}, there is a coherent $\Ocal_\fV$-module $\Fcal$ on $\fV$ which extends $\fL$. This works even for paracompact $V$  as noted in the proof of \cite[Proposition 6.2.13]{chambert-loir-ducros} 
 and the argument there (or in the proof of \cite[Lemma 7.6]{gubler98}) shows that after replacing $\fV$ by a suitable admissible blowing-up, we may assume that $\Fcal$ is a line bundle. Then the associated  formal metric satisfies the claim.
\end{proof}

\begin{definition} \label{piecewise linear metrics}
Let $V$ be a paracompact strictly $K$-analytic space with a line bundle $L$. 
A metric $\metr$ on $L$ is called {\it piecewise linear} if there is a ${\rm G}$-covering $(V_i)_{i \in I}$ and frames $s_i$ of $L$ over $V_i$ for every $i \in I$ such that $\|s_i\|=1$ on $V_i$.
A function $\varphi \colon V\to \R$ is called a \emph{piecewise linear function} if it induces a piecewise linear metric on the trivial line bundle $\mathcal{O}_V$.
Note that these are ${\rm G}$-local  definitions (see \cite[Proposition 5.10]{GK2} for the argument). 
In particular, these properties are local with respect to the Berkovich topology.
\end{definition}

\begin{lem}
\label{lemma paracompact covering}
Any given $\rm G$-covering of a connected paracompact strictly $K$-analytic space $V$ can be refined to an at most countable $\rm G$-covering $(W_i)_{i \in I}$ of finite type {(see \ref{formal models})} made by strictly $K$-affinoid domains $W_i$. Moreover,  there is always a second  $\rm G$-covering $(U_i)_{i \in I}$ of finite type made by compact strictly $K$-analytic domains $U_i$ such that every $W_i$ is contained in the interior $U_i^\circ$ of $U_i$ with respect to the Berkovich topology.
\end{lem}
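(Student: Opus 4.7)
I will combine the $\sigma$-compactness of $V$ with the quasi-net property of strictly $K$-affinoid domains in a strictly $K$-analytic space, and with the local openness of analytic domains in the Berkovich topology.

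First, since $V$ is a connected, paracompact, Hausdorff, locally compact topological space (Berkovich spaces being locally compact Hausdorff), it is $\sigma$-compact. A standard inductive argument in locally compact Hausdorff spaces then yields compact subsets $\varnothing = K_{-1} = K_0 \subset K_1 \subset K_2 \subset \cdots$ with $V = \bigcup_n K_n$ and $K_n \subset K_{n+1}^\circ$ (Berkovich interior). I set $L_n \coloneqq K_n \setminus K_{n-1}^\circ$ (compact) and $\Omega_n \coloneqq K_{n+1}^\circ \setminus K_{n-2}$ (Berkovich-open); one has $L_n \subset \Omega_n$ and $\Omega_n \cap \Omega_m = \varnothing$ whenever $|n-m| \ge 4$.

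Next, for each $x \in L_n$, I pick some $V_{\alpha(x)}$ in the given $\rm G$-covering with $x \in V_{\alpha(x)}$. Since $V_{\alpha(x)}$ is an analytic domain (hence Berkovich-open around $x$) and strictly $K$-affinoid domains form a quasi-net on $V$, I choose a strictly $K$-affinoid domain $U_x$ of $V$ with $x \in U_x^\circ$ and $U_x \subset V_{\alpha(x)} \cap \Omega_n$. Inside the strictly $K$-affinoid space $U_x$, strictly $K$-affinoid (Laurent) subdomains form a basis of the Berkovich topology; so I further choose a strictly $K$-affinoid subdomain $W_x$ of $U_x$ with $x \in W_x^\circ$ and $W_x \subset U_x^\circ$ (interiors in $V$).

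Compactness of $L_n$ then permits extracting a finite subcover from $\{W_x^\circ\}_{x \in L_n}$; relabelling the resulting finite families across $n \in \N$ produces an at most countable collection of pairs $(W_i, U_i)_{i \in I}$. By construction $(W_i)$ refines the given $\rm G$-covering and $W_i \subset U_i^\circ$. The open sets $W_i^\circ$ cover $V$, so $(W_i)$ and $(U_i)$ are $\rm G$-coverings of $V$ (admissibility follows because any strictly $K$-affinoid subset of $V$ is compact and hence met by finitely many $W_i^\circ$). Finite type is inherited from the exhaustion: each $W_i$ or $U_i$ indexed in the layer $L_n$ lies in $\Omega_n$, and $\Omega_n$ meets $\Omega_m$ only when $|n-m| \le 3$, each such layer contributing only finitely many members.

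The main obstacle is the second paragraph: producing nested strictly $K$-affinoid neighborhoods $W_x \subset U_x^\circ$ inside the prescribed Berkovich-open set $V_{\alpha(x)} \cap \Omega_n$ with $x$ interior to $W_x$. This relies on the existence of a strictly $K$-affinoid Berkovich neighborhood of each point of an analytic domain which is itself contained in the domain, together with the density of strictly $K$-affinoid (Laurent) subdomains in the Berkovich topology of a strictly $K$-affinoid space.
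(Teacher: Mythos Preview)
There is a genuine gap. Your argument hinges on the parenthetical claim that ``$V_{\alpha(x)}$ is an analytic domain (hence Berkovich-open around $x$)'', and you flag this again at the end as the ``main obstacle''. But this claim is false in general for members of a $\rm G$-covering. Analytic domains need not be Berkovich neighbourhoods of all their points. Concretely, take $V$ the closed unit disc over $K$ and the admissible covering $\{|T|\le r\}\cup\{|T|\ge r\}$ for some $r\in|K^\times|$ with $0<r<1$. The type-$2$ point $\eta_r$ lies in both pieces, yet neither is a Berkovich neighbourhood of $\eta_r$: every Berkovich-open set containing $\eta_r$ meets both $\{|T|<r\}$ and $\{|T|>r\}$. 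So at $x=\eta_r$ you cannot find a strictly $K$-affinoid $U_x$ with $x\in U_x^\circ$ and $U_x\subset V_{\alpha(x)}$ for either choice of $\alpha(x)$. The quasi-net property only gives you that a \emph{finite union} of members is a neighbourhood of $x$, not any single member.

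The paper circumvents this by decoupling the two constructions. First it builds an auxiliary countable finite-type $\rm G$-covering $(V_h)$ by compact strictly $K$-analytic domains (from the exhaustion), independent of the given covering. Then it refines: on each $V_h$ the given $\rm G$-covering induces a $\rm G$-covering, and since $V_h$ is quasicompact in the $\rm G$-topology one extracts a \emph{finite} sub-$\rm G$-covering; the $W_i$ are the resulting intersections $Z_l\cap V_h$. This uses $\rm G$-quasicompactness rather than Berkovich-open interiors, and never needs $x\in W_i^\circ$. The second family $(U_i)$ is built afterwards for an arbitrary such $(W_i)$, without asking $U_i$ to sit inside any member of the original covering; one simply chooses compact strictly $K$-analytic neighbourhoods of each $W_i$ staying in a suitable shell of the exhaustion. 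Your exhaustion-by-shells framework is fine, but the refinement step must go through $\rm G$-topology quasicompactness, not through Berkovich interiors of individual covering members.
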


\begin{proof}
By \cite[chap. 1, \S 9, Th\'{e}or\`{e}me 5]{BouTop}, $V$ is countable at infinity.
For the proof of the lemma, we assume that $V$ is not compact (the compact case is similar and even easier).
Since compact strictly $K$-analytic domains of $V$ form a basis of neighborhoods of $V$, 
we deduce that there is a sequence $(T_j)_{j\in \N}$  of compact strictly $K$-analytic domains $T_j$ of $V$ 
such that $T_j\subset T_{j+1}^\circ$ for all $j\in \N$ and $V= \cup_{j\in \N} T_j$.

For each $j\in \N$, 
$T_{j+3}^\circ \setminus T_j$ is an open neighborhood of the compact set $T_{j+2} \setminus T_{j+1}^\circ$.
Since compact strictly $K$-analytic domains of $T_{j+3} \setminus T_{j}$ contain a basis of neighborhoods of $T_{j+3} \setminus T_{j}$, 
for each $j\in \N$, we can find $m_j \in \N$ and finitely many 
compact strictly $K$-analytic domains $(T_{j k})_{k=0,\ldots,m_j}$ contained in $T_{j+3}^\circ \setminus T_j$ and covering $ T_{j+2} \setminus T_{j+1}^\circ$.
Since $V = T_1 \cup \bigcup_{j\in \N} (T_{j+2}  \setminus T_{j+1}^\circ)$, we deduce that the covering 
$(V_h)_{h\in H}$ defined after re-indexing the covering 
$\{T_1\}  \cup  \{  T_{j  k} \}_{j \in \N, \ k =0, \ldots , m_j}$ is a countable {\rm G}-covering of finite type 
(to see that $(V_h)_{h\in H}$ is a {\rm G}-covering, one can use \cite[Lemma 1.6.2 (ii)]{berkovich-ihes}).
Since any compact strictly $K$-analytic domain is a finite union of strictly $K$-affinoid domains, we can easily assume that every $V_h$ is actually a strictly $K$-affinoid domain.

Let us first construct the covering $(W_i)_{i \in I}$ refining a given {\rm G}-covering $(Z_l)_{l\in L}$ of $V$. We can replace the latter by a finer {\rm G}-covering and hence we may assume that every $Z_l$ is a  strictly $K$-affinoid domain.
For each index $h\in H$, there are finitely many  
$l_{h1}, \ldots, l_{h q_h} \in L$
 such that the family 
 $\{Z_{l_{h q}} \cap V_h\}_{q=1, \ldots, q_h}$
is a {\rm G}-covering of  {$V_h$}.
{Hence the countable family 
$(Z_{l_{h q}} \cap V_h)_{h\in H, q=1,\ldots ,q_h}$
is a {\rm G}-covering of $V$ of finite type refining $(Z_l)_{l \in L}$. 
By assumption, the underlying topological space of $V$ is Hausdorff and hence it follows from \cite[Theorem 1.6.1]{berkovich-ihes} that the intersection of two strictly $K$-affinoid domains is a finite union of strictly $K$-affinoid domains. We conclude that every $Z_{l_{h q}} \cap V_h$ is a finite union of strictly $K$-affinoid domains. Using them all, we get a {\rm G}-covering $(W_i)_{i \in I}$ of $V$ of  finite type by strictly $K$-affinoid domains $W_i$ refining $(Z_l)_{l \in L}$.}
%by strictly $K$-affinoid domains $W_i$ 
%(note that the underlying rigid space is quasi-separated as we have assumed that $V$ is  Hausdorff \cite[Theorem 1.6.1]{berkovich-ihes}).}

Finally, for any  {\rm G}-covering $(W_i)_{i \in I}$ of finite type by strictly $K$-affinoid domains $W_i$, we construct a {\rm G}-covering $(U_i)_{i \in I}$ with the required properties.
For $i \in I$ and using the above notations, let $j \in \N$ be the largest number such that $W_i \cap T_j = \emptyset$. If $T_0 \cap W_i$ is non-empty, then we set $j:=-1$ and $T_j:=\emptyset$. 
Since the strictly $K$-analytic domains form a basis of neighborhoods in $V$, there is for every 
$x \in W_i$ a strictly $K$-analytic domain $U_x$ of $V$ such that $U_x$ is a neighborhood of $x$ contained in the complement of $T_j$. Since $W_i$ is compact, it is covered by finitely many $U_x^\circ$. Let $U_i$ be the union of these finitely many $U_x$. Then $U_i$ is a compact strictly $K$-analytic domain of $V$ contained in the complement of $T_j$ and with $W_i \subset U_i^\circ$. 
Since $(W_i)_{i \in I}$ is a {\rm G}-covering refining $(U_i)_{i \in I}$, the latter is also a {\rm G}-covering of $V$.

It remains to prove that $(U_i)_{i \in I}$ is a covering of finite type. We pick $k \in I$. Since $U_k$ is compact, there is a $j \in J$ such that $U_k \subset T_j^\circ$. Since $(W_i)_{i \in I}$ is a {\rm G}-covering, \cite[Lemma 1.6.2 (ii)]{berkovich-ihes} again shows that the compact set $T_j$  is covered by finitely many of the $W_i$. Since $(W_i)_{i \in I}$ is a covering of finite type, we conclude that $T_j$ is intersected by at most finitely many $W_i$. Let us choose any $i \in I$ with $W_i \cap T_j = \emptyset$. By construction of $U_i$, we have $ U_i \cap T_j = \emptyset$ and hence $U_i$ is disjoint from $U_k \subset T_j$. We conclude that the covering $(U_i)_{i \in I}$ is of finite type.
\end{proof}

\begin{prop} \label{formal vs piecewise linear metrics}
Let $\metr$ be a metric on a  line bundle $L$ on a  paracompact strictly $K$-analytic space $V$. Then $\metr$ is formal if  and only if it is piecewise linear.
\end{prop}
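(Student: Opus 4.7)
The easy direction is immediate from the definitions: if $\metr = \metr_\fL$ comes from a formal $\kcirc$-model $(\fV, \fL)$, then any covering of $\fV$ by formal affine opens $(\fW_i)$ on which $\fL$ trivialises with frames $s_i$ yields a G-covering $(\fW_{i,\eta})$ of $V$ by strictly $K$-affinoid domains with $\|s_i\|_\fL \equiv 1$, so $\metr$ is piecewise linear.

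For the converse, assume $\metr$ is piecewise linear. The plan is first to reduce to a well-behaved covering, then to build a formal model and line bundle realising $\metr$. Passing to connected components, I may assume $V$ is connected. By Lemma \ref{lemma paracompact covering}, I refine the given G-covering to a countable G-covering $(W_i)_{i \in I}$ of finite type by strictly $K$-affinoid domains such that $L$ admits a frame $s_i$ over each $W_i$ with $\|s_i\| \equiv 1$, together with a second finite-type G-covering $(U_i)_{i \in I}$ satisfying $W_i \subset U_i^\circ$. Setting $g_{ij} := s_j/s_i \in \Ocal(W_i \cap W_j)^\times$, I obtain a cocycle presenting $L$, and the normalisation $\|s_i\| \equiv 1$ forces $|g_{ij}| \equiv 1$, hence both $g_{ij}$ and $g_{ij}^{-1}$ lie in $\Ocal(W_i \cap W_j)^\circ$.

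Next, I will construct a formal $\kcirc$-model $\fV$ of $V$ carrying formal open subschemes $\fW_i$ with $(\fW_i)_\eta = W_i$ and such that each $g_{ij}$ extends to a unit in $\Ocal_{\fV}(\fW_i \cap \fW_j)$. Gluing via the cocycle $(g_{ij})$ will then produce a formal line bundle $\fL$ on $\fV$ whose associated formal metric is exactly $\metr$ by construction, since the $s_i$ become frames of $\fL$ over $\fW_i$. Existence of a formal model $\fV$ realising every $W_i$ as the generic fibre of a formal open subscheme follows from Raynaud's theorem \cite[Theorem 8.4.3]{bosch-lectures-2015} in its paracompact version, applied in the spirit of the proof of Proposition \ref{extension of formal metrics}: start from any formal model and refine it by admissible blow-ups so that each strictly $K$-affinoid $W_i$ is cut out by a formal open. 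To turn the $g_{ij}$ into units, I will perform further admissible blow-ups supported in $\fW_i \cap \fW_j$: since $g_{ij}$ and $g_{ij}^{-1}$ are both power-bounded, a suitable admissible blow-up of $\fV$ (by an application of \cite[Lemma 8.4.4]{bosch-lectures-2015} on the affinoid $W_i \cap W_j$) will bring them into $\Ocal_\fV(\fW_i \cap \fW_j)$, making $g_{ij}$ a unit.

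The main obstacle is that there are countably many pairs $(i,j)$ to be treated, and in the paracompact setting one cannot merely take a supremum of finitely many admissible blow-ups. This is where the finite-type property of $(W_i)$ and the cushion $W_i \subset U_i^\circ$ from Lemma \ref{lemma paracompact covering} become decisive: each $W_i$ is disjoint from all but finitely many $W_j$, so the blow-ups required to handle the pairs $(\fW_i, \fW_j)$ form a locally finite family supported in the interiors of the $U_i$, and therefore assemble into a single admissible formal blow-up of $\fV$. This is the point at which the argument genuinely departs from the compact case treated in \cite{gubler98}, justifying the ``entirely new argument'' announced in the discussion after Theorem \ref{comparison}.
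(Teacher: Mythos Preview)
Your overall strategy coincides with the paper's: reduce to a connected space, refine to a countable finite-type G-covering by strictly $K$-affinoid domains carrying frames with $\|s_i\|\equiv 1$, observe that the transition functions $g_{ij}$ have $|g_{ij}|\equiv 1$, and then build a formal model on which the $g_{ij}$ become units so that the cocycle defines a formal line bundle. The paper additionally simplifies by first using the existence of some formal metric on $L$ to reduce to $L=\Ocal_V$, so that the desired line bundle is given by a vertical Cartier divisor; you skip this reduction, which is harmless.

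The gap is in the step where you ``assemble'' the countably many admissible blow-ups into one. Two issues: first, you assert that the blow-up needed to make $g_{ij}$ a unit can be taken with center ``supported in the interiors of the $U_i$'', but you give no argument for this localisation of the center. The relevant tool is \cite[Proposition 8.2.13]{bosch-lectures-2015}, which lets one extend an admissible blow-up from a formal open subset to the whole model while keeping the center inside that open; you do not invoke it. Second, and more seriously, even granting localised centers, performing all blow-ups simultaneously requires checking that the blow-up for one pair does not destroy what was arranged for another; your sketch does not address this. The paper handles both issues together by an explicit inductive construction: at stage $k$ it blows up $\fV^{(k-1)}$ only on $\fV_k^{(k-1)}$ (using \cite[Lemma 8.4.4(c)]{bosch-lectures-2015} and \cite[Corollary 5.4(b)]{bosch-luetkebohmert2}), then extends to all of $\fV^{(k-1)}$ via \cite[Proposition 8.2.13]{bosch-lectures-2015} with the center avoiding all $\fV_i^{(k-1)}$ that are disjoint from $\fV_k^{(k-1)}$. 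The finite-type property then guarantees that over each fixed $\fV_i^{(0)}$ the models stabilise, yielding a well-defined limit $\fV$. This inductive stabilisation is the precise mechanism you are missing.

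Finally, the cushion covering $(U_i)$ with $W_i\subset U_i^\circ$ from Lemma \ref{lemma paracompact covering} is not used in the paper's proof here (it is used later for the density result); only the finite-type property of $(W_i)$ is needed.
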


\begin{proof} 
{
	Clearly, every formal metric is piecewise linear. Let us prove the converse. For a piecewise linear metric $\metr$ on $L$, there is a ${\rm G}$-covering $(V_i)_{i \in I}$ of $V$ with frames $s_i$ of $L|_{V_i}$ such that $\|s_i\|=1$ on $V_i$. By Lemma \ref{lemma paracompact covering}, we may assume that the ${\rm G}$-covering is of finite type and that every $V_i$ is a strictly $K$-affinoid domain. By Raynaud's theorem \cite[Theorem 8.4.3]{bosch-lectures-2015} and using \cite[Lemma 8.4.5]{bosch-lectures-2015}, there is a formal $K^\circ$-model $\fV$ and a  covering $(\fV_i)_{i \in I}$ of $\fV$ of finite type by quasi-compact formal open subschemes $\fV_i$ with generic fiber $V_i$. Note that any formal $K^\circ$-model is quasi-separated \cite[bottom of p.~204]{bosch-lectures-2015}. 
For every $i,j \in I$, we conclude that the formal open subscheme $\fV_i \cap \fV_j$ is a finite union of formal affine open subschemes $\fV_{ijk}=\Spf(A_{ijk})$. 
%Note that the generic fiber of $\fV_{ijk}$ is the Berkovich spectrum of the strictly affinoid algebra $\Acal_{ijk} = A_{ijk} \otimes_{K^\circ} K$. 
For $f_{ij} \coloneqq s_i/s_j \in \Ocal(V_i \cap V_j)^\times$, the identity $\|s_i\| \equiv \|s_j\|$ on $V_i \cap V_j$ yields that   $|f_{ij}| \equiv 1$ on $V_i \cap V_j$. Then \cite[Lemma 8.4.6]{bosch-lectures-2015} shows that $\fV_{ijk}'=\Spf(A_{ijk}[f_{ijk},f_{jik}])$ is an admissible formal scheme and that the canonical morphism  $\fV_{ijk}' \to \fV_{ijk}$ is an admissible formal blowing up.  
By construction, we have $f_{ij} \in \Ocal(\fV_{ijk}')^\times$. 
}

{
We apply now \cite[Proposition 8.2.14]{bosch-lectures-2015} to the covering $\{\fV_{ijk}\}$ of $\fV$ of finite type. This gives the existence of an admissible formal blowing up $\varphi:\fV' \to \fV$ which factorizes through $\fV_{ijk}' \to \fV_{ijk}$ for every $ijk$. We note that $(\varphi^{-1}(\fV_i))_{i \in I}$ is a formal open covering of $\fV'$ of finite type and that
$$\varphi^{-1}(\fV_i) \cap \varphi^{-1}(\fV_j) 
=
\varphi^{-1}(\fV_i\cap \fV_j )
=
\bigcup_k \varphi^{-1}(\fV_{ijk}).$$
Since $\varphi^{-1}(\fV_{ijk})$ is the preimage of $\fV_{ijk}'$ with respect to $\fV_{ijk}' \to \fV_{ijk}$, the above factorization yields $f_{ij} \circ \varphi \in \Ocal((\varphi^{-1}(\fV_{ijk}))^\times$ for every $ijk$ and hence $f_{ij} \circ \varphi \in \Ocal(\varphi^{-1}(\fV_i) \cap \varphi^{-1}(\fV_j))^\times$. Let $\Lcal$ be the model of $L$ on $\fV'$ given by the transition functions $f_{ij} \circ \varphi$ with respect to the covering $(\varphi^{-1}(\fV_i))_{i \in I}$. Then the construction shows that $\metr = \metr_\Lcal$.}
\end{proof}

\begin{definition} \label{Q-formal} 
Let $V$ be a paracompact strictly $K$-analytic space with a line bundle $L$.
A metric $\metr$ on  $L$  is called {\it piecewise $\Q$-linear} if for every $x\in V$ there exists  an open neighborhood $W$ of $x$ and a non-zero $n \in \N$ such that $\metr^{\otimes n}_{|W}$ is a  piecewise linear metric on $L^{\otimes n}_{|W}$. 
A function $\varphi \colon V\to \R$ is called a \emph{piecewise $\Q$-linear function} if it induces a piecewise $\Q$-linear metric on the trivial line bundle $\mathcal{O}_V$.
\end{definition}

\begin{prop} \label{properties for piecewise linear}
Let $V$ be a paracompact strictly $K$-analytic space with a line bundle $L$. Then the following properties hold:
\begin{itemize}
\item[(a)] A piecewise $\Q$-linear metric on $L$ is continuous.
\item[(b)] The isometry classes of piecewise linear (resp.\ piecewise $\Q$-linear) metrics on line bundles of $V$ form an abelian group with respect to $\otimes$.
\item[(c)]  The pull-back $f^*\metr$ of a piecewise linear (resp.\ piecewise $\Q$-linear) metric $\metr$ on $L$ with respect to a morphism $f:W \to V$ of paracompact strictly $K$-analytic spaces is a piecewise linear (resp.\ piecewise $\Q$-linear) metric on $f^*L$.
\item[(d)] The minimum and the maximum of two piecewise linear (resp.\ piecewise $\Q$-linear) metrics on $L$ are again 
 piecewise linear (resp.\ piecewise $\Q$-linear) metrics on $L$.
\end{itemize}
\end{prop}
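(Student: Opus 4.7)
The plan is to treat the four properties in sequence, repeatedly refining $\rm G$-coverings and reducing the piecewise $\Q$-linear statements to the piecewise linear ones by raising to a common power. Throughout I will use the basic fact that if $\metr_1$ is defined on the $\rm G$-covering $(V_i)_{i\in I}$ with frames $s_i$ of norm $1$ and $\metr_2$ is defined on $(W_j)_{j\in J}$ with frames $t_j$ of norm $1$, then on the common refinement $(V_i\cap W_j)_{(i,j)\in I\times J}$ the restrictions of $s_i$ and $t_j$ remain norm-$1$ frames for the respective metrics; after a further refinement I may assume each $V_i\cap W_j$ is strictly $K$-affinoid.

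For (a), a local section of $L$ on $V_i$ has the form $f\cdot s_i$ with $f\in\Ocal(V_i)$, so $\|f s_i\|=|f|$ is continuous; the piecewise $\Q$-linear case follows since the condition is local on the Berkovich topology and $x\mapsto x^{1/n}$ is continuous on $\R_{\geq 0}$. For (b), the trivial metric on $\Ocal_V$ is the neutral element; the tensor product of two piecewise linear metrics is piecewise linear with frame $s_i\otimes t_j$ of norm $1$ on a common refinement, and the dual metric on $L^\vee$ is piecewise linear with dual frame $s_i^\vee$, which satisfies $\|s_i^\vee\|=\|s_i\|^{-1}=1$. The piecewise $\Q$-linear version follows locally by choosing a common integer $n$ for which both metrics become piecewise linear. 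For (c), the pull-back $f^*(s_i)$ is a frame of $f^*L$ on $f^{-1}(V_i)$ with $\|f^*(s_i)\|=\|s_i\|\circ f\equiv 1$, and $(f^{-1}(V_i))_{i\in I}$ is again a $\rm G$-covering of $W$ because morphisms of strictly $K$-analytic spaces are continuous for the $\rm G$-topology; the piecewise $\Q$-linear case reduces locally to this.

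The main substantive point is (d). After a common refinement I may assume that $\metr_1$ and $\metr_2$ are both piecewise linear on the same $\rm G$-covering $(V_i)_{i\in I}$ by strictly $K$-affinoid domains, with $s_i$ a norm-$1$ frame for $\metr_1$ on $V_i$. Writing a norm-$1$ frame for $\metr_2$ on $V_i$ as $f_i\, s_i$ with $f_i\in\Ocal(V_i)^\times$ yields $\|s_i\|_2=|f_i|^{-1}$. The Laurent subdomains $V_i^-:=\{x\in V_i\mid|f_i(x)|\leq 1\}$ and $V_i^+:=\{x\in V_i\mid|f_i(x)|\geq 1\}$ are strictly $K$-affinoid and their union is $V_i$, so together they form a $\rm G$-covering of $V$ refining $(V_i)_{i\in I}$. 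On $V_i^-$ we have $\min(\|s_i\|_1,\|s_i\|_2)=1$, so $s_i$ is a norm-$1$ frame for $\min(\metr_1,\metr_2)$; on $V_i^+$ we have $\min(\|s_i\|_1,\|s_i\|_2)=|f_i|^{-1}$, so $f_i s_i$ is a norm-$1$ frame for $\min(\metr_1,\metr_2)$. The maximum is handled symmetrically. The piecewise $\Q$-linear case then follows from the identity $\min(\metr_1,\metr_2)^{\otimes n}=\min(\metr_1^{\otimes n},\metr_2^{\otimes n})$ (and the analogous one for $\max$) by localising and choosing $n$ so that both $\metr_i^{\otimes n}$ become piecewise linear on a common neighbourhood.

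The only real obstacle is (d); inside it the delicate point is that the Laurent subdomains $V_i^\pm$ genuinely $\rm G$-cover the affinoid $V_i$, but this is the standard Laurent decomposition $|f|\leq 1$, $|f|\geq 1$ from strictly $K$-affinoid geometry, so the remaining difficulty is organisational rather than conceptual.
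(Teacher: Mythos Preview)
Your argument is correct. The paper's own proof is simply a one-line citation: it observes that all four properties were established in \cite[Section~7]{gubler98} for compact $V$ over algebraically closed $K$, that the arguments there never actually use algebraic closedness, and that since (a)--(d) are $\rm G$-local statements they immediately transfer to the paracompact setting.

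Your route is genuinely different in that you give a self-contained proof rather than deferring to the reference. In particular, for (d) you write out explicitly the Laurent decomposition $V_i = V_i^- \cup V_i^+$ along $|f_i|=1$, which is exactly the construction underlying the cited result but is not spelled out in the present paper. What your approach buys is transparency: a reader sees directly why the minimum and maximum are piecewise linear without consulting \cite{gubler98}. What the paper's approach buys is brevity and a clean reduction principle (local statements on paracompact spaces reduce to the compact case), which is a pattern reused elsewhere in the paper. Both are entirely sound; yours is the more informative write-up.
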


\begin{proof} 
These properties are proved in \cite[Section 7]{gubler98} under the assumption that $K$ is algebraically closed and $V$ is compact. The assumption $K$ algebraically closed was not used in the arguments. Since (a)--(d) are local statements, we can deduce them from the corresponding statements in {\it loc.~ cit.} 
\end{proof}

Let $V$ be a paracompact strictly $K$-analytic space. 
Recall that for $U \subset V$, we denote the topological interior of $U$ in $V$ by $U^\circ$.

\begin{lem}
\label{lemma extension}
Let $W \subset U \subset V$ where $W,U$ are compact strictly $K$-analytic domains of $V$ with $W \subset U^\circ$.
Let $f \colon W \to \R$ be a piecewise linear function.
Then $f$ extends to a piecewise linear function $\varphi \colon V \to \R$ such that $\supp(\varphi) \subset U$.
\end{lem}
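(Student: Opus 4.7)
The plan is to first extend $f$ to a piecewise linear function on all of $V$ using the extension of formal metrics, and then truncate this extension via lattice operations against an appropriate piecewise linear bump function in order to enforce the support condition.

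First, by Proposition \ref{formal vs piecewise linear metrics}, the piecewise linear function $f$ on $W$ corresponds to a formal metric on $\Ocal_W$. Applying Proposition \ref{extension of formal metrics} (which applies since $W$ is a compact strictly $K$-analytic domain of $V$) followed by the same equivalence, we obtain a piecewise linear function $\tilde f \colon V \to \R$ with $\tilde f|_W = f$.

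Next, since compact strictly $K$-analytic domains form a basis of neighbourhoods in $V$ and $W \subset U^\circ$, we may choose a compact strictly $K$-analytic domain $W'$ of $V$ with $W \subset W'^\circ \subset W' \subset U^\circ$. As $\tilde f$ is continuous on $V$ by Proposition \ref{properties for piecewise linear}(a), it is bounded on the compact set $U$, and since the value group $\Gamma$ is unbounded in $\R_{\geq 0}$, we fix $M \in \Gamma$ with $M \geq \max_{x \in U} |\tilde f(x)|$.

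The key technical step is to construct a piecewise linear function $\chi \colon V \to \R$ satisfying $\chi \geq 0$ on $V$, $\chi \geq M$ on $W$, and $\supp(\chi) \subset W'$. To do this, we first apply Proposition \ref{extension of formal metrics} (to the formal metric $e^{-M}$ on $\Ocal_W$) to obtain a piecewise linear extension of the constant function $M$ from $W$ to $W'$; this extension is then modified to vanish in a $W'$-neighbourhood of the topological boundary $\partial_V W'$ by combining it, via the lattice operations of Proposition \ref{properties for piecewise linear}(d), with auxiliary piecewise linear functions obtained from further applications of Proposition \ref{extension of formal metrics}. The resulting function on $W'$ has compact support inside $W'^\circ$ and therefore extends by zero to a function $\chi \colon V \to \R$; this extension is piecewise linear by the $G$-local character of piecewise linearity (cf.\ the remark after Definition \ref{piecewise linear metrics}), using that $\chi$ vanishes in a Berkovich neighbourhood of each point of $\partial_V W'$.

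Finally, define
\[
\varphi := \max\bigl(-\chi,\; \min(\tilde f,\, \chi)\bigr).
\]
By Proposition \ref{properties for piecewise linear}(d), $\varphi$ is piecewise linear on $V$. On $W$, we have $\chi \geq M \geq |\tilde f|$, so $\min(\tilde f,\chi) = \tilde f$ and $-\chi \leq -M \leq \tilde f$, giving $\varphi = \tilde f = f$. On $V \setminus W'$, $\chi = 0$, so $\min(\tilde f, 0) \leq 0$ and hence $\max(0, \min(\tilde f, 0)) = 0$, giving $\varphi = 0$. Thus $\varphi$ extends $f$ and satisfies $\supp(\varphi) \subset W' \subset U$. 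The main obstacle is the construction of the bump function $\chi$, as it requires combining the extension result with lattice operations in order to realize simultaneously both the pointwise lower bound on $W$ and the compact support inside $W'$.
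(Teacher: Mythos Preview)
Your reduction to constructing a bump function $\chi$ is sound in principle, but the construction of $\chi$ itself is left as a sketch precisely at the point that carries all the content. You write that an extension of the constant $M$ ``is then modified to vanish in a $W'$-neighbourhood of the topological boundary $\partial_V W'$ by combining it, via the lattice operations\dots, with auxiliary piecewise linear functions obtained from further applications of Proposition~\ref{extension of formal metrics}.'' But producing a piecewise linear function on $W'$ that is $\geq M$ on $W$ and vanishes near $\partial_V W'$ is exactly the statement of the lemma applied to the constant function $M$ (with $W'$ playing the role of $U$), followed by $\max(\cdot,0)$. So as written, this step is circular: you are invoking the lemma to prove the lemma. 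The phrase ``auxiliary piecewise linear functions'' does not specify which functions or how the lattice operations force vanishing near the boundary while preserving the lower bound on $W$.

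The paper's proof avoids the bump function entirely by a single direct trick: choose a compact strictly $K$-analytic domain $Z$ that is a neighbourhood of $U\setminus U^\circ$ and disjoint from $W$; then $W\coprod Z$ is itself a compact strictly $K$-analytic domain of $V$, so one can apply Proposition~\ref{extension of formal metrics} \emph{once} to the function equal to $f$ on $W$ and to $0$ on $Z$, obtaining $g$ on $V$. Since $g$ already vanishes on a neighbourhood of $U\setminus U^\circ$, the cut-off $\varphi:=g\cdot\mathbf 1_U$ is piecewise linear by locality. This is also precisely the missing idea needed to build your $\chi$; once you have it, the detour through $\chi$ and the truncation $\max(-\chi,\min(\tilde f,\chi))$ becomes unnecessary.
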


\begin{proof}
By compactness of $U \setminus U^\circ$, there exists a compact strictly $K$-analytic domain $Z \subset V$ such that $Z$ is a neighborhood of $U\setminus U^\circ$ and 
$W\cap Z=\emptyset$.
Hence $W \coprod Z$ is a compact strictly $K$-analytic domain of $V$ and we consider the piecewise linear function on $W \coprod Z$ defined by $f$ on $W$ and by $0$ on $Z$.
Then we apply Proposition \ref{extension of formal metrics} to $L= \mathcal{O}_V$, in which case formal metrics correspond to piecewise linear functions (see Proposition \ref{formal vs piecewise linear metrics}). 
We deduce that there exists a piecewise linear function $ g \colon V \to \R$ which agrees with $f$ on $W$ and which agrees with $0$ on $Z$.
But since $Z$ is a neighborhood of $ U\setminus U^\circ$, we deduce that the function $\varphi \colon V \to \R$ defined by 
\[ \varphi (x) = 
\begin{cases} 
g(x) & {\rm if} \ x\in U \\
0 & {\rm if } \ x\notin U
\end{cases}
\]
is still piecewise linear as this is a local property.
Since $\varphi$ extends $f$ and $\supp(\varphi) \subset U$, we get the claim.
\end{proof}

\begin{lem}
\label{lemma extension 2}
Let $V$ be a paracompact strictly $K$-analytic space.
Let $W \subset V$  be a compact strictly $K$-analytic domain of $V$ and let $f \colon W \to \R$ be a continuous function with $f\geq 0$.
Then for any $\varepsilon >0$ there exists a piecewise $\Q$-linear function $\varphi$ on $V$ such that $\varphi \geq 0$ and 
for all $x\in W$ we have  $f(x) -\varepsilon \leq \varphi(x) \leq f(x)$.
\end{lem}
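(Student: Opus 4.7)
The plan is to reduce to a piecewise linear extension problem on $W$ and then apply Lemma \ref{lemma extension}. Concretely, I would first uniformly approximate $f$ on $W$ by a piecewise $\Q$-linear function, shift it down by a suitable small constant so that it lies in the band $[f-\varepsilon,\,f]$, truncate at zero to make it non-negative, clear denominators to get something piecewise \emph{linear}, and finally apply Lemma \ref{lemma extension} followed by another truncation on $V$.

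Since $W$ is a compact strictly $K$-analytic space, \cite[Theorem~7.12]{gubler98} yields a piecewise $\Q$-linear function $\psi \colon W \to \R$ with $|\psi-f|\leq \varepsilon/4$ on $W$. The subtle point is that one cannot simply shift $\psi$ by $\varepsilon/2$, because piecewise $\Q$-linear functions only take values (locally) in $\Q\Gamma = \Q \cdot \log|K^\times|$. However $\Q\Gamma$ is a non-trivial $\Q$-subspace of $\R$, hence dense, so I may pick $c \in \Q\Gamma$ with $\varepsilon/4 \leq c \leq 3\varepsilon/4$. Writing $c=(p/q)\log|a|$ with $a\in K^\times$ and $q\in \N_{>0}$, the constant function $-c$ on $V$ is piecewise $\Q$-linear since $-qc = \log|a^{-p}|$ is globally piecewise linear. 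By Proposition \ref{properties for piecewise linear}(b) and (d) the function
\[ \psi' := \max(\psi - c,\,0) \]
is then piecewise $\Q$-linear on $W$, and an elementary check using $|\psi-f|\leq \varepsilon/4$, $\varepsilon/4\leq c\leq 3\varepsilon/4$ and $f \geq 0$ gives $0 \leq \psi' \leq f$ and $\psi' \geq f-\varepsilon$ on $W$.

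To finish, I need to extend $\psi'$ to a non-negative piecewise $\Q$-linear function on $V$. By compactness of $W$ together with the local definition of piecewise $\Q$-linear, a common denominator $n \in \N_{>0}$ can be chosen so that $n\psi'$ is piecewise \emph{linear} on $W$ (the local multiples from the definition can be combined using the abelian group structure of Proposition \ref{properties for piecewise linear}(b)). Since compact strictly $K$-analytic domains form a basis of neighbourhoods in $V$ (as used in the proof of Lemma \ref{lemma paracompact covering}), a straightforward compactness argument produces a compact strictly $K$-analytic domain $U$ of $V$ with $W \subset U^\circ$. Lemma \ref{lemma extension} applied to the piecewise linear function $n\psi'$ on $W$ yields a piecewise linear extension $g \colon V \to \R$ with $g|_W = n\psi'$ and $\supp(g)\subset U$. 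Setting
\[ \varphi := \max(g/n,\,0) \]
produces the required function: it is piecewise $\Q$-linear on $V$ by Proposition \ref{properties for piecewise linear}(b) and (d), it is non-negative by construction, and on $W$ it equals $\max(\psi',0) = \psi'$, which satisfies $f-\varepsilon \leq \varphi \leq f$.

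The main obstacle in this argument is exactly the shift by a real constant: the naive idea of subtracting $\varepsilon/2$ does not respect piecewise $\Q$-linearity in general, and it is the density of $\Q\Gamma$ in $\R$ that makes the detour through a nearby $c \in \Q\Gamma$ work. Everything else is essentially a careful assembly of Lemma \ref{lemma extension}, Proposition \ref{properties for piecewise linear} and the compact density theorem of \cite{gubler98}.
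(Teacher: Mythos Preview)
Your proof is correct and follows essentially the same strategy as the paper: approximate on $W$, clear denominators, extend to $V$, and truncate at zero. Two points where you take unnecessary detours are worth noting. First, your careful handling of the shift via $c \in \Q\Gamma$ can be avoided entirely: simply apply \cite[Theorem~7.12]{gubler98} to the continuous function $f - \varepsilon/2$ (rather than to $f$) to obtain directly a piecewise $\Q$-linear $g$ on $W$ with $f-\varepsilon \leq g \leq f$; this is what the paper does. Second, the paper extends $kg$ using Proposition~\ref{extension of formal metrics} directly rather than passing through Lemma~\ref{lemma extension}, so no auxiliary neighbourhood $U$ is needed; it then takes $\max(\,\cdot\,,0)$ only once, on $V$, after extending. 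Your double truncation (once on $W$, once on $V$) is harmless but redundant.
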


\begin{proof}
Since piecewise $\Q$-linear functions are dense in the compact case \cite[Theorem 7.12]{gubler98}, there exists a piecewise $\Q$-linear function $g \colon W \to \R$ such that 
$f- \varepsilon \leq g \leq f$ on $W$.
Since $W$ is compact, there is a non-zero $k \in \N$ such that $kg$ is piecewise linear. 
By Proposition \ref{extension of formal metrics} and Proposition \ref{formal vs piecewise linear metrics} applied to the formal metric on $\Ocal_V$ associated with $kg$, there exists a piecewise $\Q$-linear function $\psi \colon V \to \R$ which extends $g$.
We then set $\varphi \coloneqq \max(\psi, 0)$.
By Proposition \ref{properties for piecewise linear} (d), $\varphi$ is piecewise $\Q$-linear.
By definition, we have $\varphi \geq 0$. 
We have $\psi \leq f$ on $W$ and $f$ is non-negative,  hence 
 we have $\varphi \leq f$ on $W$.
Finally, since $f-\varepsilon \leq \psi$ on $W$ we also have that 
$f-\varepsilon \leq \max(\psi,0) = \varphi$ on $W$.
\end{proof}

\begin{prop}
\label{proposition density uniform convergence}
Let $V$ be a paracompact strictly $K$-analytic space.
Let $f \colon V \to \R$ be a continuous function on $V$.
Then $f$ can be uniformly approximated by piecewise $\Q$-linear functions.
In other words, for every $\varepsilon >0$ there exists a piecewise $\Q$-linear function $\varphi \colon V\to \R$ such that 
$\sup_{x\in V} |f(x)-\varphi(x) | \leq \varepsilon$.
\end{prop}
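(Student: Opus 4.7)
The strategy is to reduce to $f \geq 0$, build a piecewise linear partition of unity on $V$ subordinate to a locally finite covering by compact strictly $K$-analytic domains, approximate each piece $f\rho_n$ by a piecewise $\Q$-linear function with controlled support and globally small error, and sum.

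First, by working component by component (piecewise $\Q$-linearity is a local property) I may take $V$ connected. Writing $f = \max(f,0)-\max(-f,0)$ and using Proposition \ref{properties for piecewise linear} (b), it then suffices to treat $f \geq 0$. Apply Lemma \ref{lemma paracompact covering} to obtain countable G-coverings $(W_i)_{i\in \N}$ and $(U_i)_{i \in \N}$ of finite type with $W_i \subset U_i^\circ$; iterating the construction, obtain further families $(U_i^+)_{i \in \N}$, $(U_i^{++})_{i \in \N}$ of finite type with $U_i \subset (U_i^+)^\circ$ and $U_i^+ \subset (U_i^{++})^\circ$. For each $i$, Lemma \ref{lemma extension} applied to the constant function $1$ on $W_i$ (with target $U_i$) yields a piecewise linear $\tilde\chi_i \colon V \to \R$ with $\tilde\chi_i|_{W_i} = 1$ and $\supp \tilde\chi_i \subset U_i$; set $\chi_i := \max(0, \min(\tilde\chi_i, 1))$, which is piecewise linear by Proposition \ref{properties for piecewise linear} (d). Define
\[
\sigma_n := \max_{0 \leq i \leq n}\chi_i, \qquad \rho_n := \sigma_n - \sigma_{n-1} \quad (\sigma_{-1} := 0).
\]
Each $\rho_n$ is non-negative and piecewise linear with $\supp \rho_n \subset U_n$, the family $(\rho_n)$ is locally finite since $(U_i)$ is (by the argument at the end of the proof of Lemma \ref{lemma paracompact covering}), and the telescoping $\sum_{n \leq N}\rho_n = \sigma_N \uparrow 1$ gives $\sum_n \rho_n = 1$ pointwise on $V$.

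For each $n$ set $\delta_n := \varepsilon/2^{n+2}$. The compact density result \cite[Theorem 7.12]{gubler98} applied to $f\rho_n$ on the compact $U_n^+$ gives a piecewise $\Q$-linear $g_n \colon U_n^+ \to \R$ with $|f\rho_n - g_n| \leq \delta_n$ on $U_n^+$; since $f\rho_n$ vanishes on $U_n^+ \setminus U_n$, this forces $|g_n| \leq \delta_n$ there. Extend $g_n$ to a piecewise $\Q$-linear $\tilde g_n \colon V \to \R$ via Proposition \ref{extension of formal metrics} and Proposition \ref{formal vs piecewise linear metrics}. Using Lemma \ref{lemma extension} and clamping, build piecewise linear bump functions $\zeta_n$ with $\zeta_n = \sup_{U_n^+} f$ on $U_n$ and $\supp \zeta_n \subset U_n^+$, and $\tau_n$ with $\tau_n = 1$ on $U_n^+$ and $\supp \tau_n \subset U_n^{++}$, and set
\[
\psi_n := \max\bigl(0,\, \min(\tilde g_n,\, \zeta_n + \delta_n \tau_n)\bigr),
\]
piecewise $\Q$-linear by Proposition \ref{properties for piecewise linear} (b), (d). Inspecting the four regions $U_n$, $U_n^+ \setminus U_n$, $U_n^{++} \setminus U_n^+$, and $V \setminus U_n^{++}$ shows $\supp \psi_n \subset U_n^{++}$ and $|f\rho_n - \psi_n| \leq 2\delta_n = \varepsilon/2^{n+1}$ everywhere on $V$.

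Finally $\varphi := \sum_n \psi_n$ is a locally finite sum of piecewise $\Q$-linear functions (since $(U_n^{++})$ is of finite type), hence piecewise $\Q$-linear by Proposition \ref{properties for piecewise linear} (b), and
\[
|f(x) - \varphi(x)| \leq \sum_n |f\rho_n - \psi_n|(x) \leq \sum_n \varepsilon/2^{n+1} \leq \varepsilon
\]
for every $x \in V$. The main obstacle is the global bound on $|f\rho_n - \psi_n|$: compact density on $U_n^+$ controls the error only on $U_n^+$, and the extension $\tilde g_n$ is uncontrolled outside it. The two-level clamp $\zeta_n + \delta_n \tau_n$ is the decisive device, as $\zeta_n$ allows $\psi_n$ to rise up to $\sup_{U_n^+} f$ inside $U_n$ (where it must track $f\rho_n$), while $\delta_n \tau_n$ forces $|\psi_n| \leq \delta_n$ on the outer annulus $U_n^{++} \setminus U_n^+$ where $f\rho_n$ vanishes, and cuts $\psi_n$ to $0$ beyond $U_n^{++}$.
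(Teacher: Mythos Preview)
Your partition-of-unity approach is genuinely different from the paper's. The paper does not build a partition of unity at all; instead it constructs $\varphi_1,\varphi_2,\ldots$ inductively, maintaining at each stage the sandwich $f-\varepsilon\le\sum_{i\le n}\varphi_i\le f$ on $\bigcup_{i\le n}W_i$ together with the global bound $\sum_{i\le n}\varphi_i\le f$ on $V$ (this is precisely where Lemma \ref{lemma extension 2} enters). Your method is closer in spirit to classical differential topology---localize, approximate on compacts, control the extension by clamping---and gives summable errors $\sum\varepsilon/2^{n+1}$ rather than a uniform $\varepsilon$; the paper's inductive method in return needs only two nested covering levels and never multiplies functions.

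There is, however, a technical gap that must be fixed. The constant function with value $c\in\R$ is piecewise $\Q$-linear if and only if $c\in\Q\Gamma$, where $\Gamma=\log|K^\times|$; in general $1\notin\Q\Gamma$ (e.g.\ $K=\Q_p$ with the usual absolute value, where $\Q\Gamma=\Q\cdot\log p$). Consequently Lemma \ref{lemma extension} cannot be applied to the constant $1$ on $W_i$, so $\tilde\chi_i$ is undefined; the clamp $\min(\tilde\chi_i,1)$ need not be piecewise $\Q$-linear; and $\delta_n\tau_n$ (with $\delta_n=\varepsilon/2^{n+2}$ an arbitrary real) and the clamp at $\sup_{U_n^+}f$ fail for the same reason. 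The repair is routine but has to be written out: fix once and for all some $\gamma_0\in\Gamma_{>0}$, build the bumps $\chi_i$ with height $\gamma_0$ so that $\sum_n\rho_n\equiv\gamma_0$, and then approximate the continuous functions $(f/\gamma_0)\rho_n$ rather than $f\rho_n$ (so that $\sum_n\psi_n$ approximates $f$, not $\gamma_0 f$); replace $\delta_n$ and $\sup_{U_n^+}f$ in the clamps by nearby larger elements of $\Q\Gamma$. A smaller point: your ``iterating the construction'' to obtain $(U_i^+)$ and $(U_i^{++})$ needs a word, since Lemma \ref{lemma paracompact covering} is stated for strictly $K$-affinoid input $W_i$, whereas your $U_i$ are only compact strictly $K$-analytic---the proof of the lemma works verbatim for compact input, but this should be said.
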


\begin{proof}
We will use that the result holds when $V$ is compact \cite[Theorem 7.12]{gubler98}. 
Note that  in \cite[\S 7]{gubler98},  $K$ was assumed to be algebraically closed, but the
argument for \cite[Theorem 7.12]{gubler98} does not use this assumption and so we can use the result over any non-archimedean field.
Let $f_+ \coloneqq \max(f,0)$ and $f_-\coloneqq \max(-f,0)$ so that $f=f_+ - f_-$.
Hence replacing $f$ by $f_+$ or $f_-$ we can assume that $f\geq 0$.

We can work separately on the connected components of $V$, hence we may assume that $V$ is connected.
By Lemma \ref{lemma paracompact covering}, we can find $(W_i)_{i\in I}$ and $(U_i)_{i\in I}$ two $G$-coverings of $V$ of finite type  with a finite or countable $I$ and 
$W_i \subset U_i^\circ$ for all $i\in I$.
In the following, we assume $I=\N \setminus \{0\}$. The finite case is similar and easier.
Let us now fix $\varepsilon >0$ and let us construct a family of piecewise $\Q$-linear functions $(\varphi_i)_{i\in I}$ 
with $\varphi_i : V \to \R$ such that 
\begin{enumerate}[(i)]
\item for all $i\in I$, $\supp(\varphi_i) \subset U_i$ and $\varphi_i \geq 0$.
\item for all $n\in I$ we have 
$f \geq \sum_{i=1}^n  \varphi_i  \geq f-\varepsilon$ on $\cup_{i=1}^n W_i$.
\item  $f \geq \sum_{i=1}^n \varphi_i $ on $V$.
\end{enumerate}
Observe that this will conclude the proof of the proposition since then $\varphi \coloneqq \sum_{i\in I} \varphi_i$ is a well defined piecewise $\Q$-linear function such that 
$|f - \varphi| \leq \varepsilon$.
The rest of the proof is dedicated to construct inductively a family $(\varphi_i)_{i\in I}$ satisfying the conditions (i), (ii) and (iii).

Let us consider $n\geq 1$ and let us assume that we are given piecewise $\Q$-linear functions $\varphi_1,\ldots,\varphi_n$ satisfying the above conditions.
We will now construct a piecewise $\Q$-linear function $\varphi_{n+1}$ such that $\varphi_1,\ldots, \varphi_{n+1}$ satisfies the conditions (i), (ii) and (iii).

By the density result in the compact case \cite[Theorem 7.12]{gubler98}, 
we know that there exists a piecewise $\Q$-linear function $g \colon W_{n+1} \to \R$ such that 
\begin{equation}
\label{eq density 0}
f-\sum_{i=1}^n \varphi_i - \varepsilon \leq g \leq f-\sum_{i=1}^n \varphi_i \hspace{20pt}  {\rm on } \ W_{n+1}
\end{equation}
Then by Lemma \ref{lemma extension} applied to $g$ and $W_{n+1} \subset U_{n+1} \subset V$, there exists a 
piecewise $\Q$-linear function $\Psi \colon V \to \R$ which extends $g$ and with $\supp(\Psi)\subset U_{n+1}$.
Then \eqref{eq density 0} becomes 
\begin{equation}
\label{eq density 1}
f-\varepsilon \leq \Psi + \sum_{i=1}^n \varphi_i \leq f \hspace{20pt} {\rm on } \ W_{n+1}.
\end{equation}
Then we set
\[ \psi \coloneqq \max(0, \Psi).\]
From this definition, we get that $\supp(\psi) \subset \supp(\Psi) \subset U_{n+1}$.
It is a piecewise $\Q$-linear function by Proposition \ref{properties for piecewise linear} (d) and it satisfies $\psi \geq 0$.
Now, \eqref{eq density 1} combined with the condition (iii) for $n$ yields
\begin{equation}
\label{eq density 1 bis}
\psi + \sum_{i=1}^n \varphi_i \leq f \hspace{20pt} \ {\rm on }\ W_{n+1}.
\end{equation}
Also, since $\Psi \leq \psi$, we deduce from \eqref{eq density 1} that 
\begin{equation}
\label{eq density 1 bisbis}
f-\varepsilon \leq \psi + \sum_{i=1}^n \varphi_i \hspace{20pt} {\rm on} \ W_{n+1}.
\end{equation}
On the other hand, since $\psi \geq 0$, the condition (ii) for $n$ yields 
\begin{equation}
\label{eq density 1 bisbisbis}
f-\varepsilon \leq \psi +  \sum_{i=1}^n \varphi_i \ \hspace{20pt}  \ {\rm on } \ \bigcup_{i=1}^n W_i.
\end{equation}
From 
 \eqref{eq density 1 bisbis} and \eqref{eq density 1 bisbisbis}, we deduce that 
\begin{equation}
\label{eq density 2}
f-\varepsilon \leq \psi +  \sum_{i=1}^n \varphi_i 
\hspace{20pt} \ {\rm on } \ \bigcup_{i=1}^{n+1} W_i.
\end{equation}
Lemma \ref{lemma extension 2} applied to the non negative function $f-\sum_{i=1}^n \varphi_i \colon V \to \R$ and 
to the compact $K$-analytic domain $\cup_{i=1}^{n+1}U_i$ yields a piecewise $\Q$-linear function 
$\chi \colon V \to \R$ such that $\chi \geq 0$ and 
\begin{equation}
\label{eq density 3}
f -  \sum_{i=1}^n \varphi_i  - \varepsilon \leq \chi \leq f -\sum_{i=1}^n \varphi_i \hspace{20pt} \ {\rm on } \ \bigcup_{i=1}^{n+1} U_i.
\end{equation}
We then set 
\[ \varphi_{n+1} \coloneqq \min(\psi, \chi).\]
By Proposition \ref{properties for piecewise linear} (d), $\varphi_{n+1}$ is a piecewise $\Q$-linear function.
Since $\psi \geq 0$ and $\chi \geq 0$ we get that $\varphi_{n+1}\geq 0$ and we also get that for $x\in V$, $\psi(x)= 0 \Rightarrow \varphi_{n+1}(x)=0$. 
This implies that $\supp (\varphi_{n+1}) \subset \supp (\psi) \subset U_{n+1}$.
Hence (i) is satisfied for $\varphi_{n+1}$.

Let us now prove that 
\begin{equation}
\label{eq density 4}
\sum_{i=1}^{n+1} \varphi_i \leq f \ \ {\rm on } \ V.
\end{equation}
Let $x\in V$. 
We first  suppose that $x\in U_{n+1}$.
Then by \eqref{eq density 3}, we have 
$\chi(x) + \sum_{i=1}^n \varphi_i(x)  \leq f(x)$.
By definition of $\varphi_{n+1}$, we have 
$\varphi_{n+1}\leq \chi$ hence 
\[ \varphi_{n+1}(x) + \sum_{i=1}^n \varphi_i(x) \leq \chi(x) + \sum_{i=1}^n \varphi_i(x) \leq f(x).\]
If $x\notin U_{n+1}$, then we have $\psi(x) =0$ since $\supp(\psi) \subset U_{n+1}$, hence $\varphi_{n+1}(x) = 0$.
So by the condition (iii) for $n$, we get
\[ \sum_{i=1}^{n+1} \varphi_i(x) = \sum_{i=1}^n \varphi_i(x) \leq f(x).\]   
This proves \eqref{eq density 4}, whence condition (iii) holds for $n+1$.

Let us finally prove that 
\[
f -\varepsilon \leq \sum_{i=1}^{n+1} \varphi_i \leq f \hspace{30pt} {\rm on} \ \bigcup_{i=1}^{n+1} W_i.
\]
The right inequality has been proven in \eqref{eq density 4} so it only remains to prove the left inequality.
By \eqref{eq density 2}, we have 
\begin{equation}
\label{eq density 5}
f-\varepsilon \leq \psi + \sum_{i=1}^n \varphi_i \hspace{30pt} {\rm on} \ \bigcup_{i=1}^{n+1}W_i
\end{equation}
and by construction (see \eqref{eq density 3} having in mind that $W_i \subset U_i$), we have 
\begin{equation}
\label{eq density 6}
f-\varepsilon \leq \chi + \sum_{i=1}^n \varphi_i \hspace{30pt} {\rm on} \ \bigcup_{i=1}^{n+1} W_i.
\end{equation}
Hence \eqref{eq density 5} and \eqref{eq density 6} yield that 
\[ f-\varepsilon \leq \min(\psi,\chi) +\sum_{i=1}^n \varphi_i  = \sum_{i=1}^{n+1} \varphi_i \hspace{20pt} {\rm on} \ \bigcup_{i=1}^{n+1} W_i\]
which proves condition (ii) for $\varphi_1,\ldots,\varphi_{n+1}$.
By induction, this proves the existence of a family $(\varphi_i)_{i\in I}$ satisfying conditions (i), (ii) and (iii).
\end{proof}

\begin{rem}
\label{remark partition of unity}
The proof of Proposition \ref{proposition density uniform convergence} also gives that if  $\varphi \colon V \to \R$ is a piecewise $\Q$-linear function on a  paracompact strictly $K$-analytic space $V$, 
then there exists a family $(\varphi_i)_{i\in I}$ of piecewise $\Q$-linear functions on $V$ such that the family $\supp(\varphi_i)_{i\in I}$ is a locally finite family of compact sets 
subordinate to any given open covering of $V$
and such that 
$\varphi = \sum_{i\in I} \varphi_i$.
Indeed, in the above proof we may construct the covering $U_i$ finer than the given open covering and then we may use $\varepsilon=0$ 
in the construction due to piecewise $\Q$-linearity.
\end{rem}

\begin{thm}
\label{theorem density uniform convergence false}
Let $V$ be a paracompact strictly $K$-analytic space with a line bundle $L$.
If $\| \ \|$ is a continuous metric on $L$, then there is a sequence $(\| \ \|_n)_{n\in \N}$ of piecewise $\Q$-linear metrics on $L$ which converges uniformly to $\| \ \|$.
\end{thm}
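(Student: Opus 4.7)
My plan is to reduce the problem about metrics on $L$ to the scalar density statement for continuous functions (Proposition \ref{proposition density uniform convergence}), using a fixed reference piecewise linear metric as a trivialization.

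First, I would produce a reference piecewise linear metric $\|\ \|_0$ on $L$. By \ref{formal models}, the paracompact strictly $K$-analytic space $V$ admits a formal $\kcirc$-model, and by \cite[Proposition 6.2.13]{chambert-loir-ducros} the line bundle $L$ extends to a line bundle $\fL$ on some such model. The associated formal metric $\|\ \|_\fL$ (Definition \ref{formal metric}) is a formal metric on $L$, which is piecewise linear, and in particular piecewise $\Q$-linear, by Proposition \ref{formal vs piecewise linear metrics}.

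Next, given any continuous metric $\|\ \|$ on $L$, the quotient $\|\ \|/\|\ \|_0$ is a well-defined strictly positive continuous function on $V$ (it does not depend on a local frame since both metrics transform identically under a change of frame). Setting $f \coloneqq -\log(\|\ \|/\|\ \|_0)$, I obtain a continuous real-valued function $f \colon V \to \R$. Applying Proposition \ref{proposition density uniform convergence}, I choose a sequence of piecewise $\Q$-linear functions $f_n \colon V \to \R$ with $\sup_{x \in V} |f(x) - f_n(x)| \leq 1/n$. Defining
\[
\|\ \|_n \coloneqq e^{-f_n}\,\|\ \|_0,
\]
this is the tensor product of $\|\ \|_0$ (piecewise linear on $L$) with the piecewise $\Q$-linear metric $e^{-f_n}\cdot$ on the trivial bundle $\Ocal_V$; by Proposition \ref{properties for piecewise linear}(b), each $\|\ \|_n$ is a piecewise $\Q$-linear metric on $L$. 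Since
\[
\sup_{x \in V}\bigl|\log(\|\ \|_n(x)/\|\ \|(x))\bigr| \;=\; \sup_{x \in V} |f(x) - f_n(x)| \;\leq\; 1/n,
\]
the sequence $(\|\ \|_n)$ converges uniformly to $\|\ \|$, as required.

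The real content is packaged into the two ingredients cited: (i) the existence of a global formal (hence piecewise linear) reference metric in the paracompact setting, which relies on the fact that $L$ extends to a line bundle on some admissible formal model, and (ii) the uniform density of piecewise $\Q$-linear functions among continuous functions on a paracompact strictly $K$-analytic space, proved via a partition-of-unity argument patching the compact density theorem \cite[Theorem 7.12]{gubler98}. Once both are in hand, the passage from functions to metrics on $L$ is just the elementary change-of-frame computation above. The only genuinely delicate point is to ensure that $\|\ \|_n$ is piecewise $\Q$-linear globally on $L$ and not merely locally — this is why I invoke the group law of Proposition \ref{properties for piecewise linear}(b) rather than trying to manufacture compatible local frames by hand.
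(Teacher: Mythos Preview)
Your proof is correct and follows essentially the same approach as the paper: fix a formal (hence piecewise linear) reference metric to reduce to the case $L=\Ocal_V$, then invoke Proposition~\ref{proposition density uniform convergence}. The paper phrases the reduction as ``tensoring by $L^{-1}$'' rather than writing out the quotient and the group law explicitly, but the content is identical.
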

\begin{proof}
We have seen at the end of \ref{formal models}
that $L$ admits a formal metric.
Hence, tensoring by $L^{-1}$, we can assume that $L= \mathcal{O}_V$ 
and we are reduced to prove that for any continuous function $f \colon V \to \R$ there exists a sequence of 
 {piecewise $\Q$-linear} functions $(\varphi_n)_{n\in \N}$ which converges uniformly to $f$
which was done in Proposition \ref{proposition density uniform convergence}.
\end{proof}

The next result deals with base change of piecewise linear metrics. We denote by {$\hat{\otimes}_K F$} the base change functor from the base field $K$ to a non-archimedean extension field $F$ applied to the category of strictly $K$-analytic spaces or to the line bundles on such spaces. The argument for (b)  is due to Yuan (see \cite[Lemma 3.5]{yuan-2008}).

\begin{prop} \label{metric and base extension}
Let $L$ be a line bundle on a paracompact strictly $K$-analytic space $V$ and let $F/K$ be a non-archimedean field extension. 
\begin{itemize}
\item[(a)] The base change of a piecewise linear (resp.\ piecewise $\Q$-linear) metric on $L$  is a piecewise linear (resp.\ piecewise $\Q$-linear)  metric on $L \hat{\otimes}_K F$.
\item[(b)] If $F$ is a subfield of $\C_K$ and if $V$ is compact, then every piecewise linear (resp.\ piecewise $\Q$-linear) metric on  $L \hat{\otimes}_K F$ is the base change of a unique piecewise linear (resp.\ piecewise $\Q$-linear) metric on $L \hat{\otimes}_K {K'}$ for a suitable finite subextension $K'/K$ of $F/K$.
\end{itemize}
\end{prop}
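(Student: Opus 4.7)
The plan for (a) is to exploit the G-local nature of both definitions. By Proposition \ref{formal vs piecewise linear metrics} and Definition \ref{Q-formal}, piecewise linearity (resp.\ piecewise $\Q$-linearity) is witnessed by the existence of a G-covering $(V_i)_{i \in I}$ of $V$ together with frames $s_i$ of $L|_{V_i}$ (resp.\ after a tensor power) with $\|s_i\|=1$. Base change along $F/K$ sends this G-covering of $V$ to a G-covering $(V_i \hat{\otimes}_K F)_{i \in I}$ of $V \hat{\otimes}_K F$ (this is clear from the construction of G-topologies on strictly $F$-analytic spaces), and the frames $s_i \hat{\otimes} 1$ of $(L \hat{\otimes}_K F)|_{V_i \hat{\otimes}_K F}$ still satisfy $\|s_i \hat{\otimes} 1\| = 1$. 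This gives (a).

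For (b), I follow Yuan's strategy from \cite[Lemma 3.5]{yuan-2008}. First, by Proposition \ref{formal vs piecewise linear metrics}, a piecewise linear metric on $L \hat{\otimes}_K F$ corresponds to a formal $F^\circ$-model $(\fV_F, \fL_F)$ of $(V \hat{\otimes}_K F, L \hat{\otimes}_K F)$, and by Proposition \ref{properties for piecewise linear} the piecewise $\Q$-linear case reduces to the piecewise linear one after passing to a tensor power. Since $V$ is compact, so is $V \hat{\otimes}_K F$, hence we may choose $\fV_F$ to be covered by finitely many formal affine opens $\Spf(B_j)$, where each $B_j$ is an admissible $F^\circ$-algebra of finite presentation, glued along finitely many morphisms; similarly $\fL_F$ is described by a finite Čech $1$-cocycle of units.

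The heart of the argument is a noetherian approximation and descent. Only finitely many elements of $F^\circ$ (the coefficients of defining equations for the $B_j$, the gluing data, and the line bundle cocycle) enter into this description. Since $F \subset \C_K$ and $\overline{K}$ is dense in $\C_K$, I replace each such coefficient by a sufficiently close element of $\overline{K}^\circ$ (and hence of $(F \cap \overline{K})^\circ$ after passing to the appropriate subfield); using that admissibility, flatness and the invertibility of transition functions are open conditions modulo a fixed power of a pseudo-uniformizer, these perturbed data still define a formal model whose base change to $F^\circ$ is isomorphic to the original $(\fV_F, \fL_F)$ (after a common admissible formal blowing-up). The finitely many chosen algebraic elements generate a finite subextension $K'/K$ inside $F$, and the data then constitutes a formal $K'^\circ$-model $(\fV_{K'}, \fL_{K'})$ whose base change induces the given metric.

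For uniqueness, I use that the canonical map $V \hat{\otimes}_K F \to V \hat{\otimes}_K K'$ is surjective on Berkovich points for the complete valued extension $F/K'$ (by \cite[\S 1]{berkovich-ihes}): two piecewise linear metrics on $L \hat{\otimes}_K K'$ whose base changes to $L \hat{\otimes}_K F$ coincide must agree pointwise on $V \hat{\otimes}_K K'$, hence coincide. The main obstacle will be in the middle paragraph: verifying rigorously that a sufficiently good approximation of the defining coefficients yields a formal model whose base change to $F^\circ$ produces an isomorphic formal scheme and an isomorphic line bundle, so that the metric on $L \hat{\otimes}_K F$ is genuinely recovered and not merely approximated.
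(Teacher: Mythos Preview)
Your argument for (a) is fine and matches the paper's ``obvious''.

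For (b) you have correctly identified the obstacle, and it is a real one: perturbing the structure constants of the admissible $F^\circ$-algebras $B_j$ and the gluing maps does \emph{not} in general yield an isomorphic formal scheme after base change back to $F^\circ$, so your descended model need not recover the given metric. The phrase ``after a common admissible formal blowing-up'' does not repair this, because a blowing-up changes the metric in general.

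The paper circumvents this by anchoring everything to a formal $K^\circ$-model that already exists. Concretely: pick a formal $K^\circ$-model $(\fV,\fL)$ of $(V,L)$ (this exists by \ref{formal models}); dividing by $\metr_\fL$ reduces to $L=\Ocal_V$. By Raynaud's theorem one may take the $F^\circ$-model $\fV''$ carrying the metric to be an admissible formal blowing-up of $\fV \hat{\otimes}_{K^\circ} F^\circ$ in an open coherent ideal $\mathfrak{b}$, and the metric is then $\metr_{\Ocal(E)}$ for a vertical Cartier divisor $E$ on $\fV''$, which one may take effective after scaling. Now the only data to descend are the ideal $\mathfrak{b}$ and the ideal sheaf $\Ocal(-E)$, both living inside the structure sheaf of a scheme base-changed from $K^\circ$. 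Since $\mathfrak{b}$ is open it contains some nonzero $\alpha \in K^{\circ\circ}$, so perturbing its finitely many generators by elements of norm $\leq |\alpha|$ leaves the ideal literally unchanged; density of $\overline{K}$ then lets one choose the perturbed generators with coefficients in $(K')^\circ$ for a finite $K'/K$. The same works for $\Ocal(-E)$ on the finitely many affine charts of the resulting $\fV'$. Thus one descends the \emph{same} ideals, hence the \emph{same} blowing-up and the \emph{same} metric, not merely an approximation. This is precisely the step your direct approach is missing.
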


\begin{proof} 
It follows from \cite[Theorem 1.6.1]{berkovich-ihes} that the base change of $V$ to $F$ is a paracompact strictly $F$-analytic space. 
Property (a) is obvious. 

To prove (b), we assume that $\metr$ is a piecewise linear metric on $L \hat{\otimes}_K F$. 
We have seen in \ref{formal models} that $(V,L)$ has a formal $\kcirc$-model $(\fV,\fL)$ and so we may assume that $L=\Ocal_V$ by passing to $\metr/\metr_{\fL \hat{\otimes}_{\kcirc} F^\circ}$.  By Proposition \ref{formal vs piecewise linear metrics}, there is a formal $F^\circ$-model $(\fV'',\fL'')$ of $(V \hat{\otimes}_K F,L \hat{\otimes}_K F)$ such that $\metr = \metr_{\fL''}$. By Raynaud's theorem \cite[Theorem 4.1]{bosch-luetkebohmert-1}, we may assume that there is an admissible formal blowing up $\fV'' \to \fV \hat{\otimes}_{\kcirc} F^\circ$. Note that $L=\Ocal_V$ yields that $\fL''= \Ocal(E)$ for a vertical  Cartier divisor $E$ on $\fV''$. Replacing $\metr$ by a suitable multiple, we may assume that $E$ is an effective Cartier divisor. 

An approximation argument based on the density of the algebraic closure of $K$ in $F$ shows that the coherent ideal of the admissible formal blowing up 
is defined over $(K')^\circ$ for a  finite subextension $K'/K$ of $F/K$. 
We conclude  that  $\fV'' \to \fV \hat{\otimes}_{\kcirc} F^\circ$ is the base change of an admissible formal blowing up $\fV' \to \fV \hat{\otimes}_{\kcirc} (K')^\circ$ for a formal $(K')^\circ$-model $\fV'$ of $V \hat{\otimes}_K K'$.  
We choose a finite  covering $(\fU_i')_{i \in I}$ of $\fV'$ by formal affine open subsets $\fU_i'$ of $\fV'$. 
Then the coherent sheaf of ideals $\Ocal(-E)$ restricted to $\fU_i' \hat{\otimes}_{(K')^\circ} F^\circ$ is generated by finitely many regular functions. A similar approximation argument as above shows that all these generators can be replaced by regular functions on $\fU_i'$ if we replace $K'$ by a larger finite subextension  of $F/K$. We conclude that $\fL''=\Ocal(E)$ is defined on $\fV'$ proving (b). Note that uniqueness is obvious.  
\end{proof}

\section{Semipositive metrics}
\label{section Semipositive metrics}
We will first introduce semipositive formal metrics. We have seen in Proposition \ref{formal vs piecewise linear metrics} that formal metrics are the same as piecewise linear metrics and hence everything applies to piecewise linear metrics as well. 

\begin{art} \label{definition semipositive algebraic metrics}
Let $X$ be a proper scheme over $K$ with a line bundle $L$ over $X$. We call an algebraic $\kcirc$-model $(\Xcal,\Lcal)$ of $(X,L)$  
{\it numerically effective} (briefly {\it nef}) if $\deg_\Lcal(C) \geq 0$ for every closed curve $C$ in $\Xcal$ which is proper 
over $\kcirc$. Of course, properness implies that $C$ is contained in the special fiber $\Xcal_s$. 
An algebraic metric $\metr$ on $L^\an$ is said to be {\it semipositive} if there is  a nef algebraic $\kcirc$-model $(\Xcal,\Lcal)$ of $(X,L)$ such that $\metr = \metr_\Lcal$. 
We say that a line bundle $\Lcal$ on $\Xcal$ is {\it numerically trivial} if $\deg_\Lcal(C) = 0$ for every closed curve $C$ in $\Xcal$ which is proper 
over $\kcirc$. Equivalently, we can require that $\Lcal$ and $\Lcal^{-1}$ are both nef. We say that $\Lcal$ is {\it numerically equivalent} to a line bundle $\Lcal'$ on $\Xcal$ if $\Lcal' \otimes \Lcal^{-1}$ is numerically trivial.
\end{art}

\begin{art} \label{definition semipositive formal metrics}
The above definition is easily generalized to the analytic setting: Let $L$ be a line bundle on a paracompact  strictly $K$-analytic space $V$. 
A formal $\kcirc$-model $(\fV,\fL)$ of $(V,L)$ is called {\it nef} if $\deg_\fL(C) \geq 0$ for any closed curve $C$ in the special fiber $\fV_s$ which is proper over the residue field $\ktilde$. 
A formal metric $\metr$ on $L$ is called {\it semipositive} if there is a nef formal $\kcirc$-model $(\fV,\fL)$ of $(V,L)$ such that $\metr = \metr_\fL$. 
Obviously, the trivial metric on $\Ocal_V$ is a semipositive formal metric.
\end{art}

It will follow from Proposition \ref{model test} below that we may use any model to test semipositivity of the associated metrics. 
Based on this result, it is easy to check that the tensor product of two semipositive formal metrics is again a semipositive formal metric.

\begin{lem}
\label{lemma semipositive base change}
Let $V$ be a paracompact strictly $K$-analytic space, $L$ a line bundle on $V$ and $(\fV,\fL)$ a formal $K^\circ$-model of $(V,L)$.
Let $F$ be a non-archimedean extension of $K$ and 
$(\fV_F,\fL_F)$ the formal $F^\circ$-model of $(V_F,L_F)$ obtained by base change. 
Then $\fL$ is nef if and only if $\fL_F$ is nef. 
\end{lem}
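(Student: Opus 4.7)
The plan is to prove the two implications separately; the reverse implication is short, while the forward one requires a spreading out argument.

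Direction ``$\fL_F$ nef $\Rightarrow$ $\fL$ nef''. Given a closed curve $C$ in $\fV_s$ proper over $\ktilde$, I would form its base change $C_{\tilde F} := C \otimes_{\ktilde} \tilde F$ inside $(\fV_F)_s = (\fV_s)_{\tilde F}$. Writing $C_{\tilde F} = \sum m_i D_i$ as a $1$-cycle with the $D_i$ integral closed curves proper over $\tilde F$, the hypothesis gives $\deg_{\fL_F}(D_i) \geq 0$. Flat base change of Euler characteristics then yields $\deg_\fL(C) = \deg_{\fL_F}(C_{\tilde F}) = \sum m_i \deg_{\fL_F}(D_i) \geq 0$.

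Direction ``$\fL$ nef $\Rightarrow$ $\fL_F$ nef''. Let $C'$ be a closed integral curve in $(\fV_F)_s = (\fV_s)_{\tilde F}$ proper over $\tilde F$. My first step would be to spread out: since $C'$ is quasi-compact, its image meets only finitely many affine opens of $\fV_s$, so the ideal cutting it out is generated by finitely many equations whose coefficients lie in some finitely generated $\ktilde$-subalgebra $R \subset \tilde F$. I thus obtain a closed subscheme $\mathcal{C} \subset \fV_s \times_{\ktilde} \Spec R$ with $\mathcal{C} \otimes_R \tilde F = C'$. After shrinking $\Spec R$, generic flatness and the standard spreading out of properness let me assume that $\mathcal{C} \to \Spec R$ is flat and proper with purely one-dimensional fibers. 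Local constancy of Euler characteristic in proper flat families, combined with flat base change, then gives
\[
\deg_{\fL_F}(C') = \deg_{\fL_{\tilde E}}(\mathcal{C}_{\mathfrak m})
\]
for any closed point $\mathfrak m$ of $\Spec R$, where $\tilde E := R/\mathfrak m$ is a finite extension of $\ktilde$ by the Nullstellensatz.

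This reduces the problem to showing $\deg_{\fL_{\tilde E}}(D) \geq 0$ for each integral component $D$ of $\mathcal{C}_{\mathfrak m}$. The finite projection $p_{\tilde E} \colon (\fV_s)_{\tilde E} \to \fV_s$ maps $D$ finitely and surjectively onto an integral closed curve $C_0 := p_{\tilde E}(D)_{\rm red}$ in $\fV_s$, which is proper over $\ktilde$. Combining the compatibility $\chi_{\ktilde} = [\tilde E : \ktilde] \cdot \chi_{\tilde E}$ of Euler characteristics under restriction of scalars, the projection formula for $p_{\tilde E}^* \fL = \fL_{\tilde E}$, and the standard identity $\chi(C_0, \fL|_{C_0} \otimes \mathcal{F}) - \chi(C_0, \mathcal{F}) = \rk(\mathcal{F}) \cdot \deg(\fL|_{C_0})$ on the proper integral curve $C_0$, I expect to arrive at
\[
\deg_{\fL_{\tilde E}}(D) = \frac{[\kappa(D) : \kappa(C_0)]}{[\tilde E : \ktilde]} \deg_\fL(C_0),
\]
which is nonnegative by the nefness hypothesis on $\fL$.

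The main obstacle is setting up the spreading out cleanly: since $\fV_s$ need not be of finite type over $\ktilde$, one has to work inside a finite-type open containing the (quasi-compact) image of $C'$, and must simultaneously arrange flatness, properness, and purely one-dimensional fibers for $\mathcal{C} \to \Spec R$. Once this is in place, together with local constancy of degree in the family, the reduction to a finite residue field extension and the projection-formula computation producing the displayed identity are routine.
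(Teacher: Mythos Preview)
Your argument is correct. The paper takes a much shorter route: it simply observes that $(\fV_F)_s \cong (\fV_s) \otimes_{\ktilde} \tilde{F}$, so the question becomes one about invariance of nefness of a line bundle on a $\ktilde$-scheme under extension of the base field, and then cites \cite[Remark 1.3.25]{DFEM} for this fact in the projective case, deducing the proper case by Chow's lemma and the projection formula. Your spreading-out argument is essentially the content behind such a reference, made explicit.

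One point in favour of your approach: the paper's appeal to ``a line bundle on a proper scheme over $\ktilde$'' is slightly loose here, since $\fV_s$ is in general only \emph{locally} of finite type over $\ktilde$. (It always is locally of finite type, being the special fibre of an admissible formal scheme, so your concern is easily handled by passing to a quasi-compact open containing the image of $C'$.) Your spreading out to a finitely generated $\ktilde$-subalgebra $R \subset \tilde{F}$ and specializing to a closed point makes the reduction to a finite residue extension transparent without needing to embed anything into a proper scheme. The final displayed identity is just the standard compatibility $\deg_{\ktilde}(f^*M) = \deg(f)\cdot\deg_{\ktilde}(M)$ for a finite surjective map $f$ of proper integral curves, together with the change-of-base-field factor $[\tilde E : \ktilde]$; it is correct, and in any case all you need is that the ratio is positive.
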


\begin{proof}
We remark that $\fV_s \otimes_{\tilde{K}} \tilde{L}   \cong   (\fV \hat{\otimes}_{\Ko} L^\circ)_s$.
Hence the result follows from the fact that a line bundle  on a proper scheme over $\ktilde$ is nef if and only if its pull-back to $\tilde{F}$ is nef.
This is proven in the projective case in \cite[Remark 1.3.25]{DFEM} and the proper case follows from Chow's lemma and the projection formula.
\end{proof}

\begin{lem}
\label{semipositive reduced}
Let $V$ be a paracompact strictly $K$-analytic space, $L$ a line bundle on $V$ and $(\fV,\fL)$ a formal $K^\circ$-model of $(V,L)$. 
Let $(\fV_{\rm red},\fL_{\rm red})$ be the formal $K^\circ$-model of $(V_{\rm red},L_{\rm red})$ obtained by putting the induced reduced structure.
Then $\fL$ is nef if and only if $\fL_{\rm red}$ is nef. 
\end{lem}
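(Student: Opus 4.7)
The plan is to reduce the nef condition, which is tested on proper closed (reduced, integral) curves in the special fiber, to an essentially tautological observation: such curves automatically factor through any scheme-theoretic reduction of the ambient space, so the test curves and their degrees are identical on $\fV$ and on $\fV_{\rm red}$.

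First, I would unravel the relation between the various reductions. The closed immersion $\fV_{\rm red}\hookrightarrow \fV$ is defined by the sheaf of nilpotents $\Nil(\Ocal_\fV)$. Passing to special fibers, I get a closed immersion $(\fV_{\rm red})_s\hookrightarrow \fV_s$ whose defining ideal is the image of $\Nil(\Ocal_\fV)$ in $\Ocal_{\fV_s}$; since this image consists of nilpotent sections, it is contained in $\Nil(\Ocal_{\fV_s})$. Consequently I obtain a chain of closed immersions
\[
(\fV_s)_{\rm red}\;\hookrightarrow\;(\fV_{\rm red})_s\;\hookrightarrow\;\fV_s,
\]
where all three schemes share the same underlying topological space.

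Second, any closed curve $C\subset \fV_s$ (being by definition reduced and integral) factors uniquely through $(\fV_s)_{\rm red}$, and therefore also through $(\fV_{\rm red})_s$. Conversely, every closed integral curve in $(\fV_{\rm red})_s$ is a closed integral curve in $\fV_s$ via the embedding above. Properness over $\tilde K$ is unchanged by these factorizations, so the set of proper closed curves we need to test is literally the same for $\fV$ and for $\fV_{\rm red}$.

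Third, for such a curve $C$, the composition $C\hookrightarrow \fV_s\hookrightarrow \fV$ factors through $\fV_{\rm red}$; pulling back $\fL$ along this factorization gives the restriction of $\fL_{\rm red}$ to $C$. Hence $\fL|_C\cong \fL_{\rm red}|_C$ as line bundles on the proper curve $C$ over $\tilde K$, and so $\deg_\fL(C)=\deg_{\fL_{\rm red}}(C)$. Combining the three steps, the inequality $\deg_\fL(C)\geq 0$ for all proper closed curves in $\fV_s$ is equivalent to the analogous inequality on $(\fV_{\rm red})_s$, which proves the lemma. The only subtlety I expect is bookkeeping — making sure the canonical maps between $\fV_s$, $(\fV_s)_{\rm red}$, $(\fV_{\rm red})_s$ and $\fV_{\rm red}$ are set up correctly — but there is no real analytic obstacle, since the argument is ultimately scheme-theoretic on the special fiber.
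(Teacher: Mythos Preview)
Your argument is correct and is essentially the same as the paper's, just unpacked more explicitly. The paper observes that $\fV_{\rm red}\to\fV$ is a closed immersion (hence finite), so $(\fV_{\rm red})_s\to\fV_s$ is finite and surjective, and then invokes the projection formula to conclude that $\fL$ is nef if and only if $\fL_{\rm red}$ is; your factorization of reduced integral curves through $(\fV_s)_{\rm red}\hookrightarrow(\fV_{\rm red})_s$ is precisely what the projection formula amounts to in this special case of a nilpotent closed immersion.
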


\begin{proof}
Let $\fV_{\rm red}$  be the induced  reduced structure on $\fV$. 
Since $\fV_{\rm red} \to \fV$ is finite (in fact a closed immersion), we deduce that the induced map $(\fV_{\rm red})_ s \to \fV_s$ between the special fibers is finite.  
By the projection formula, we conclude  that $\fL$ is nef if and only if $\fL_{\rm red}$ is nef.
\end{proof}

\begin{prop} 
\label{model test}
Let $(\fV,\fL)$ be a formal $\kcirc$-model of $(V,L)$. 
Then $\metr_\fL$ is a semipositive formal metric if and only if $\fL$ is a nef formal $\kcirc$-model.  
\end{prop}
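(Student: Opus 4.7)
The backward implication is immediate from the definition in \ref{definition semipositive formal metrics}. The content lies in the forward implication: assuming $\metr_\fL$ is semipositive, by definition there exists a formal $\kcirc$-model $(\fV',\fL')$ of $(V,L)$ with $\fL'$ nef and $\metr_{\fL'} = \metr_\fL$, and I must show that the given $\fL$ is itself nef on $\fV$.

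My plan is to compare $\fL$ and $\fL'$ on a common refinement. Since the set of formal $\kcirc$-models of $V$ is directed (see \ref{formal models}), choose a formal $\kcirc$-model $\fV''$ of $V$ dominating both $\fV$ and $\fV'$ via morphisms $p\colon \fV'' \to \fV$ and $q\colon \fV'' \to \fV'$ inducing the identity on $V$. By Raynaud's theory these morphisms factor through admissible formal blowing-ups and hence are proper; in particular the restrictions $p_s$ and $q_s$ to the special fibers are proper and surjective. The pullbacks $p^*\fL$ and $q^*\fL'$ are formal line bundles on $\fV''$ inducing the same formal metric on $L$: on a formal affine open $\fW \subset \fV''$ with local frames $s$ of $p^*\fL|_\fW$ and $t$ of $q^*\fL'|_\fW$ we have $|s/t| \equiv 1$ on the generic fiber $W$. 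After replacing $\fV''$ by a suitable further admissible formal blowing-up, these ratios become units in $\Ocal_{\fV''}$, so we may assume $p^*\fL \cong q^*\fL'$ as formal line bundles on $\fV''$.

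Since $\fL'$ is nef and $q_s$ is proper, the projection formula gives that $q^*\fL'$, and therefore $p^*\fL$, is nef on $\fV''_s$. It remains to descend nefness along $p_s$. Let $C$ be a closed curve in $\fV_s$ proper over $\ktilde$. Pick an irreducible component $Z$ of $p_s^{-1}(C)$ dominating $C$; since $p_s$ is proper, $Z$ is proper over $\ktilde$. If $\dim Z = 1$ take $C' := Z$; otherwise apply Chow's lemma to $Z$ and intersect with successive hyperplane sections of the resulting projective cover to produce a closed curve $C'$ in $\fV''_s$ proper over $\ktilde$ with $p_s(C') = C$. The projection formula for the finite surjective morphism $C' \to C$ then yields
\[ \deg_\fL(C) = \frac{1}{[\kappa(C'):\kappa(C)]}\,\deg_{p^*\fL}(C') \geq 0, \]
whence $\fL$ is nef.

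The main obstacle I expect is the descent step along $p_s$. The direction ``pullback of nef is nef'' is immediate from the projection formula, but the converse requires lifting an arbitrary proper curve $C \subset \fV_s$ to a proper curve in $\fV''_s$. When the components of $p_s^{-1}(C)$ dominating $C$ have dimension greater than one, this needs the standard hyperplane-section trick via Chow's lemma, which is the most delicate piece of the argument. A secondary subtlety is the reduction to $p^*\fL \cong q^*\fL'$, which relies on the correspondence between formal line bundles on models of $V$ and their induced metrics supplied by Raynaud's theory.
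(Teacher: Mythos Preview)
Your argument has a genuine gap at the step where you claim that ``after replacing $\fV''$ by a suitable further admissible formal blowing-up, these ratios become units in $\Ocal_{\fV''}$, so we may assume $p^*\fL \cong q^*\fL'$''. This is the heart of the matter, and it is not something Raynaud's theory provides. Knowing $|s/t|\equiv 1$ on the generic fibre tells you that $s/t$ and $t/s$ lie in $\Acal^\circ$ for the affinoid algebra $\Acal$ of each chart, but the structure sheaf of an admissible formal model is in general a \emph{proper} subalgebra $A\subsetneq\Acal^\circ$. Admissible formal blowing-ups only adjoin ratios $a_i/a_j$ of generators of the blow-up ideal; there is no mechanism by which they force an arbitrary power-bounded unit of $\Acal$ into the structure sheaf. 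So two line bundles on a formal model inducing the same metric need not become isomorphic after any admissible blow-up. Your closing remark that this ``relies on the correspondence between formal line bundles on models of $V$ and their induced metrics supplied by Raynaud's theory'' is precisely where the misconception lies: no such correspondence exists in this generality.

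The paper's proof addresses exactly this point, and by a different route. It first base-changes to $\C_K$ (legitimate by Lemma~\ref{lemma semipositive base change}), then replaces $\fV$ not by a blow-up but by a \emph{finite} morphism $\fV'\to\fV$ constructed so that $\fV'$ is locally of the form $\Spf(\Acal^\circ)$; this uses that $\Acal^\circ$ is an admissible $\kcirc$-algebra when $K$ is algebraically closed. Such a $\fV'$ has reduced special fibre, and on models of this shape one can invoke \cite[Proposition~7.5]{gubler98}: the metric then \emph{does} determine the line bundle up to isomorphism. The descent of nefness along the finite surjective map $\fV'_s\to\fV_s$ is then immediate from the projection formula. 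Incidentally, your worry about the descent step along $p_s$ is misplaced: that direction (nef upstairs implies nef downstairs for a proper surjective map) is standard, and both the paper and your sketch handle it the same way. The real obstacle is the one you glossed over.
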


\begin{proof}
By definition if $\fL$ is nef, then $\| \ \|_\fL$ is semipositive, so we only have to prove the reverse implication. 
Hence we assume that $\metr_\fL$ is a semipositive formal metric and we have to show that $\fL$ is nef.
Using Lemma \ref{lemma semipositive base change}, we can replace $K$ by $\C_K$ and hence we may assume that $K$ is algebraically closed.

By definition of semipositivity, there is a nef formal $\kcirc$-model $\fM$ of $L$ on some formal $\kcirc$-model $\fW$ of $V$ with $\metr_\fL = \metr_\fM$. 
There exists  a model $\fX$ of $V$ which dominates both $\fV$ and $\fW$. 
Let $\pi : \fX \to \fV$ be the induced morphism.
Since the induced morphism on the special fibers $\pi_s \colon \fX_s \to \fV_s$ is proper and surjective, by the projection formula, 
$\fL$ is nef if and only $\pi^* \fL$ is nef. 
Hence replacing $(\fV,\fL)$ by $(\fX, \pi^* \fL)$, we can assume that $\fV$ dominates $\fW$.

Let $\fV_{\rm red}$ be the induced reduced structure on $\fV$. 
Hence $\fV_{\rm red} \to \fV$ is finite.
Locally, $\fV_{ \rm red}$ is given by $\Spf(A)$ for some reduced admissible $\Ko$-algebra $A$.
Let $\Acal \coloneqq A\otimes_{\Ko} K$. 
It is a strictly $K$-affinoid algebra, and by \cite[6.4.3]{bgr} $A' \coloneqq \Acal^\circ$ is an admissible $K^\circ$-algebra as $K$ is algebraically closed, and  by {\cite[Proposition 3.8]{baker-payne-rabinoff}}, $A \to A'$ 
is finite.
By definition of $A'$ we have an isomorphism $A \hat{\otimes}_\kcirc K \cong A' \hat{\otimes}_\kcirc K \cong \Acal$. 
By \cite[Proposition 7.2.6/3]{bgr}, we can glue the morphisms $\Spf(A') \to \Spf(A)$ to get a formal $\kcirc$-model $\fV'$ of $V_{\rm red}$ such that 
$\fV' \to \fV_{\rm red}$ is finite.
In particular, we deduce that the induced morphisms $\fV'_s \to (\fV_{\rm red})_s \to \fV_s$  are proper and surjective, and we conclude from the projection formula that $\fL$ is nef if and only if  its pull-back $\fL'$ to $\fV'$ is nef.

By construction, $\fV'$ is locally of the form $\Spf(\Acal^\circ)$, hence we deduce that $\fV'_s$ is locally given by $\Spec(\tilde{\Acal})$ 
which is reduced.
Now we use the fact that on an admissible formal scheme with reduced special fibre and with $K$ algebraically closed, the 
metric $\metr_{\fL'}$ determines the model $\fL'$ up to isomorphism (see \cite[Proposition 7.5]{gubler98}).
Using that $\metr_{\fM'}=\metr_\fL= \metr_{\fL'}$ for the pull-back $\fM'$ of $\fM$ to $\fV'$,   we deduce that $\fM' \cong \fL'$.
As above, the pull-back $\fM'$ of $\fM$ is nef and hence $\fL'$ is nef.
\end{proof}

\begin{prop} \label{pullback of semipositive formal}

Let $f:V'\to V$ be a morphism of paracompact strictly $K$-analytic spaces and let $\metr$ be a formal metric on a line bundle $L$ of $V$. 
\begin{itemize}
\item[(a)] If $\metr$ is a semipositive formal metric, then $f^*\metr$ is a semipositive formal metric. 
\item[(b)] If $f$ is a surjective proper morphism and if $f^*\metr$ is a semipositive formal metric, then $\metr$ is a semipositive formal metric.
\end{itemize}
\end{prop}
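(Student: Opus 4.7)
The strategy for both parts is to reduce via Raynaud's theorem and Proposition~\ref{model test} to a statement about nefness of line bundles on special fibres, which can then be checked by standard algebro-geometric arguments. Write $\metr = \metr_\fL$ for some formal $\kcirc$-model $(\fV, \fL)$ of $(V, L)$. Using Raynaud's equivalence of categories in the paracompact setting \cite[Theorem~8.4.3]{bosch-lectures-2015}, after possibly enlarging $\fV$ by an admissible blowing up one extends $f$ to a morphism $\hat f \colon \fV' \to \fV$ of admissible formal $\kcirc$-schemes, where $\fV'$ is a formal $\kcirc$-model of $V'$. Then $\hat f^* \fL$ is a formal $\kcirc$-model of $f^*L$ whose associated formal metric equals $f^*\metr$. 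By Proposition~\ref{model test}, semipositivity of $\metr$ (resp.\ of $f^*\metr$) is equivalent to nefness of $\fL$ (resp.\ of $\hat f^*\fL$), so it suffices to argue at the level of special fibres via the induced morphism $\hat f_s \colon \fV'_s \to \fV_s$.

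For (a), let $C \subset \fV'_s$ be a closed curve proper over $\ktilde$, and let $C' := \hat f_s(C)$ be its scheme-theoretic image in $\fV_s$. Since $C$ is proper over $\ktilde$ and $\fV_s$ is separated, $C'$ is a proper closed subscheme of $\fV_s$. If $\dim C' = 0$, then $\hat f_s^*\fL|_C$ is trivial and has degree zero. Otherwise $C'$ is a proper closed curve in $\fV_s$, and the projection formula yields $\deg_C(\hat f_s^*\fL) = [\kappa(C) : \kappa(C')] \cdot \deg_{C'}(\fL) \ge 0$ by nefness of $\fL$.

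For (b), the additional ingredient is that $f$ is surjective and proper. I would first arrange $\hat f$ to be a proper morphism of formal schemes, using the fact that proper morphisms of paracompact strictly $K$-analytic spaces extend to proper formal morphisms after admissible blowing ups. Consequently $\hat f_s$ is proper, and it is also surjective: any closed point of $\fV_s$ lifts via the reduction map to a Berkovich point of $V$, which lies in $f(V')$ by surjectivity of $f$, and whose preimage reduces to a point of $\fV'_s$ mapping to the given closed point. Now, for any closed curve $C \subset \fV_s$ proper over $\ktilde$, a standard combination of Chow's lemma with iterated general hyperplane sections applied to an irreducible component of $\hat f_s^{-1}(C)$ surjecting onto $C$ produces a proper closed curve $C' \subset \fV'_s$ mapping finitely and surjectively onto $C$. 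The projection formula then gives $\deg_{C'}(\hat f_s^*\fL) = \deg(C'/C) \cdot \deg_C(\fL)$, whence $\deg_C(\fL) \ge 0$ by nefness of $\hat f_s^*\fL$.

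The main technical obstacle is on the formal-analytic side rather than the algebro-geometric side: constructing a formal model $\hat f$ of $f$ in the paracompact setting, and in particular arranging it to be proper in part (b). Once this is granted, both statements become routine applications of the projection formula and standard properties of nefness under pullback along morphisms of proper schemes over a field.
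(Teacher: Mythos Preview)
Your proof is correct and follows the same overall architecture as the paper: extend $f$ to a morphism of formal models via Raynaud, identify $f^*\metr$ with the metric of the pulled-back line bundle, and reduce via Proposition~\ref{model test} to a nefness statement on special fibres. The differences are in how the remaining algebro-geometric and formal-analytic steps are handled.

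For the algebro-geometric part, the paper first base-changes to algebraically closed $K$ (Lemma~\ref{lemma semipositive base change}) and then cites Kleiman \cite[Proposition~I.4.1]{Kle} as a black box for the behaviour of nefness under proper pullback and pushforward. You instead spell out the projection-formula arguments by hand, which has the minor advantage of not requiring the base change to $\C_K$.

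For the properness of $\hat f_s$ in part~(b), which you correctly flag as the main technical point, the paper does \emph{not} try to arrange for the formal morphism $\hat f$ itself to be proper. Instead it invokes Temkin \cite[Corollary~4.4]{Tem}, which shows directly that if $f$ is proper on the analytic side then the induced map on special fibres is proper, for \emph{any} formal model of $f$. This is cleaner than your proposed route of first upgrading $\hat f$ to a proper formal morphism via further blowing-ups, and it sidesteps the question of whether such an upgrade is available in the paracompact (non-quasicompact) setting. The surjectivity argument for $\hat f_s$ is the same in both proofs.
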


\begin{proof}
There is a formal $K^\circ$-model $(\fV,\fL)$ of $(V,L)$ such that $\metr= \metr_\fL$. 
We use Raynaud's theorem to extend $f$ to a morphism $\varphi:\fV' \to \fV$ of formal $\kcirc$-models. 
Then 
\begin{equation} \label{pullback and formal}
f^*\metr = \metr_{\varphi^*\fL}
\end{equation}
shows that $f^*\metr$ is a formal metric. 
To check semipositivity, Lemma \ref{lemma semipositive base change} shows that $K$ may be assumed to be algebraically closed which will allow us to use the results of \cite{Kle}.
Then the claims follow from \cite[Proposition I.4.1]{Kle} 
applied to $\varphi_s$. 
For (b), we use additionally that $\varphi_s$ is proper by \cite[Corollary 4.4]{Tem} and surjective 
(as $f$ and the reduction map $V \to \fV_s$ are surjective). 

\end{proof}

\begin{lem}
\label{irreducible components}
Let $X$ be a proper scheme over $K$, $L$ a line bundle on $X$ and $(\KX,\KL)$ an algebraic $K^\circ$-model of $(X,L)$ with $\| \ \| \coloneqq \| \ \|_\KL$.
Let $(X_i)_{i\in I}$ be the irreducible components of $X$ equipped with their induced reduced structures.
Then $\| \ \|$ is semipositive if and only if  $\| \ \|_{|X_i}$ is semipositive for all $i \in I$.
\end{lem}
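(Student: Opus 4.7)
The plan is to reduce both directions to Proposition \ref{model test}, which identifies semipositivity of an algebraic metric with nefness of the corresponding model. The forward implication is essentially formal: after analytification, each closed immersion $X_i \hookrightarrow X$ is a morphism of (compact) paracompact strictly $K$-analytic spaces, and Proposition \ref{pullback of semipositive formal}(a) gives that the pullback $\|\ \|_{|X_i}$ of a semipositive formal metric is semipositive.

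For the converse, suppose each $\|\ \|_{|X_i}$ is semipositive; I would aim to show that $\KL$ is nef on $\KX_s$. For each $i \in I$, let $\KX_i \subseteq \KX$ be the scheme-theoretic closure of $X_i$ with its induced reduced structure. Since $X_i$ is open dense in $\KX_i$, the generic points of $\KX_i$ are exactly those of $X_i$; in particular they map to the generic point of $\Spec K^\circ$, so $\KX_i$ is $K^\circ$-flat and thus a reduced proper flat $K^\circ$-model of $X_i$. Setting $\KL_i := \KL|_{\KX_i}$, functoriality of model metrics (formula \eqref{pullback and formal}) gives $\|\ \|_{\KL_i} = \|\ \|_{|X_i}$, which is semipositive by hypothesis. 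Applying Proposition \ref{model test} to $(\KX_i,\KL_i)$ then yields that $\KL_i$ is nef on $(\KX_i)_s$.

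To finish, I would verify the geometric fact $\KX_s = \bigcup_{i \in I} (\KX_i)_s$ as sets: by $K^\circ$-flatness of $\KX$, every generic point of $\KX$ meets the open generic fiber $X$ and is therefore the generic point of some $X_i$, whence the topological irreducible components of $\KX$ are precisely the closures $\KX_i$. Any closed curve $C \subset \KX_s$ is irreducible, so its generic point lies in some $(\KX_i)_s$, and taking closures gives $C \subseteq (\KX_i)_s$. Then $\deg_\KL(C) = \deg_{\KL_i}(C) \geq 0$ by nefness of $\KL_i$; since this holds for every closed curve in $\KX_s$, the line bundle $\KL$ is nef, and Proposition \ref{model test} once more delivers that $\|\ \|_\KL$ is semipositive.

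The step I expect to require the most care is the structural claim that $\KX_{\rm red} = \bigcup_i \KX_i$ and that each $\KX_i$ is $K^\circ$-flat, since $K^\circ$ need not be noetherian; however, this is handled cleanly by the fact that flatness over a valuation ring is torsion-freeness, together with the observation that the minimal primes of the reduced closure lie over the generic point of $\Spec K^\circ$.
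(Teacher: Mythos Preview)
Your proof is correct and follows essentially the same approach as the paper: both take $\KX_i$ to be the reduced closure of $X_i$ in $\KX$, observe that $(\KX_i,\KL|_{\KX_i})$ is a model of $(X_i,L|_{X_i})$, and reduce the equivalence to nefness on the special fibre via Proposition~\ref{model test}. The paper is simply terser---it packages both directions into the single observation that $\coprod_i \KX_i \to \KX$ is finite surjective and invokes the projection formula---whereas you unpack this by treating the forward direction via Proposition~\ref{pullback of semipositive formal}(a) and the converse by the set-theoretic argument that every closed curve in $\KX_s$ lies in some $(\KX_i)_s$. Your extra care in verifying that each $\KX_i$ is $K^\circ$-flat (using that minimal primes consist of zero-divisors, hence cannot contain a non-zero-divisor coming from $K^\circ$) is a detail the paper leaves implicit.
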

\begin{proof}
For each $i\in I$, let $\KX_i$ be the closed subscheme of $\KX$ defined as the topological closure of $X_i$ in $\KX$ equipped with the induced reduced structure. 
We then get for each $i\in I$  a cartesian diagram 
\[ 
\xymatrix{
X_i \ar[r]  \ar[d] &  \ar[d] X \\
\KX_i \ar[r] & \KX
}
\]
Since the morphism $\coprod_{i\in I} \KX_i \to \KX$ is finite surjective, 
 the projection formula shows that $\KL$ is nef on $\KX$ if and only if $\KL|_{\KX_i}$ is nef on $\KX_i$ for all $i$.
\end{proof}

\begin{art} \label{local definition of semipositive formal metrics}
Following a suggestion of Tony Yue Yu, we can define semipositivity locally on $V$. 
We say that a piecewise linear metric on $L$ is {\it semipositive at $x \in V$} if there is 
a compact strictly $K$-analytic domain $W$ in $V$ which is a neighborhood of $x$ such that the restriction of $\metr$ to $L|_W$ is a semipositive formal metric in the sense of \ref{definition semipositive formal metrics} 
(using the equivalence of Proposition \ref{formal vs piecewise linear metrics}).
We say that $\metr$ is {\it a semipositive piecewise linear metric} if it is semipositive at all $x \in V$. 
We will see  in Proposition \ref{local vs global semipositivity} that this fits with the definition in \ref{definition semipositive formal metrics} 
assuming that $V$ is {separated}. 
\end{art} 

\begin{definition}
\label{semipositive Q-formal}
Let $\metr$ be a  piecewise $\Q$-linear metric on the line bundle $L$ over $V$ and let  $x \in V$.
Then $\metr$ is called {\it semipositive} at $x \in V$ if and only if we may choose a compact strictly $K$-analytic domain  $W$ which is  a neighborhood of $x$ 
and some integer $k\geq 1$ such that $\metr_{|W}^{\otimes k}$ is a semipositive formal metric.
\end{definition}

It follows easily from Proposition \ref{model test} that a piecewise linear  
metric on $L$ is semipositive as a  piecewise linear  
metric if and only if it is semipositive as a piecewise $\Q$-linear  
metric.

\begin{prop} \label{semipositivity properties}
Let $L$ be a line bundle on a paracompact strictly $K$-analytic space $V$. 
Let $x \in V$ and let $\metr$ be a piecewise $\Q$-linear  metric on $L$.

\begin{itemize}
 \item[(a)] The set of points in $V$ where $\metr$ is semipositive is open in $V$.
 \item[(b)] The tensor product of two piecewise $\Q$-linear  metrics which are semipositive at $x$ is again semipositive at $x$.
 \item[(c)] Let  $f:V'\to V$ be a morphism of paracompact strictly $K$-analytic spaces. 
If $\metr$ is semipositive at $x$, then
$f^*\metr$ is  semipositive at any point of $f^{-1}(x)$.
\end{itemize}
\end{prop}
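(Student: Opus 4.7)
The plan is to reduce each item to local data provided by the definition of semipositivity in a point (Definition \ref{semipositive Q-formal}), then apply the already established tools: intersection and restriction of strictly $K$-analytic domains, the tensor product stability of semipositive formal metrics (mentioned right after Proposition \ref{model test}), and the functoriality in Proposition \ref{pullback of semipositive formal}.

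For (a), I would unwind the definition: if $\|\cdot\|$ is semipositive in $x$, there exist a compact strictly $K$-analytic domain $W$ of $V$ with $x \in W^\circ$ and an integer $k \geq 1$ such that $\|\cdot\|^{\otimes k}_{|W}$ is a semipositive formal metric. Then for every $y \in W^\circ$, the same $W$ is a neighbourhood of $y$ in the Berkovich topology, so the very same data witnesses semipositivity of $\|\cdot\|$ in $y$. Hence the semipositivity locus contains $W^\circ$, which is an open neighbourhood of $x$, proving openness.

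For (b), let $\|\cdot\|_1,\|\cdot\|_2$ be piecewise $\Q$-linear metrics, semipositive in $x$, with witnesses $(W_i,k_i)$ as above. I would take $W \coloneqq W_1 \cap W_2$; since $V$ is Hausdorff and $W_i$ are compact strictly $K$-analytic domains, $W$ is again a compact strictly $K$-analytic domain, and $x \in W_1^\circ \cap W_2^\circ \subset W^\circ$, so $W$ is a neighbourhood of $x$. Setting $k \coloneqq k_1 k_2$, the metric $(\|\cdot\|_1 \otimes \|\cdot\|_2)^{\otimes k}_{|W}$ equals the tensor product of $(\|\cdot\|_1^{\otimes k_1}_{|W})^{\otimes k_2}$ and $(\|\cdot\|_2^{\otimes k_2}_{|W})^{\otimes k_1}$. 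Each factor is a tensor power of a semipositive formal metric on $W$, hence semipositive formal (by iterating the stability of semipositive formal metrics under tensor product recorded after Proposition \ref{model test}); and the tensor product of the two factors is again semipositive formal by the same stability. This gives (b).

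For (c), let $y \in f^{-1}(x)$ and pick $W, k$ as in the definition of semipositivity of $\|\cdot\|$ in $x$. Since $f$ is continuous, $f^{-1}(W^\circ)$ is an open neighbourhood of $y$ in $V'$; using that compact strictly $K$-analytic domains form a basis of neighbourhoods in the Berkovich topology, I can choose a compact strictly $K$-analytic domain $W'$ of $V'$ with $y \in (W')^\circ$ and $W' \subset f^{-1}(W)$. Then $f$ restricts to a morphism $g \colon W' \to W$ of compact (hence paracompact) strictly $K$-analytic spaces, and $(f^*\|\cdot\|)^{\otimes k}_{|W'} = g^*(\|\cdot\|^{\otimes k}_{|W})$. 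Since $\|\cdot\|^{\otimes k}_{|W}$ is a semipositive formal metric, Proposition \ref{pullback of semipositive formal}(a) yields that its pull-back along $g$ is semipositive formal on $W'$, proving that $f^*\|\cdot\|$ is semipositive in $y$. No part of the argument presents a real obstacle; the only point worth care is the existence of a compact strictly $K$-analytic neighbourhood $W'$ inside $f^{-1}(W^\circ)$, which is exactly the basis property of strictly $K$-analytic domains.
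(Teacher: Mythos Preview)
Your proof is correct and follows essentially the same route as the paper, which merely says that (a) is obvious from the definitions, (b) follows from Proposition \ref{model test} and linearity of the degree, and (c) follows from Proposition \ref{pullback of semipositive formal}. The only step you leave implicit is in (b): you assert that $\metr_i^{\otimes k_i}|_W$ is semipositive formal on $W$, whereas the hypothesis only gives this on $W_i$; the passage from $W_i$ to $W$ is justified by Proposition \ref{pullback of semipositive formal}(a) applied to the inclusion $W \hookrightarrow W_i$, which you might cite explicitly.
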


\begin{proof}
Property (a) is obvious from the definitions. 
Property (b) follows easily from Proposition \ref{model test} and the linearity of the degree of a proper curve with respect to the divisor. Finally (c) follows from Proposition \ref{pullback of semipositive formal}.
\end{proof}

%{In the following result, we need the notion of the boundary of an analytic space as introduced in \cite[\S 2.5, \S 3.1]{berkovich-book}. An analytic space without boundary is called boundaryless. Note that the analytification of a scheme locally of finite type over $K$ is always boundaryless by \cite[Theorem 3.4.1]{berkovich-book} (boundaryless is called closed there).}

\begin{prop} \label{local vs global semipositivity}
Let $L$ be a line bundle on the {separated} paracompact strictly $K$-analytic space $V$ and let $\metr$ be a formal metric on $L$. Then $\metr$ is a semipositive formal metric as globally defined in \ref{definition semipositive formal metrics} if and only if $\metr$ is a semipositive piecewise linear  metric in every $x \in V$ as defined in \ref{local definition of semipositive formal metrics}. 
\end{prop}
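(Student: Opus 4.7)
The forward direction is essentially formal. Given a nef formal model $(\fV,\fL)$ of $(V,L)$ with $\metr = \metr_\fL$ and any $x\in V$, Raynaud's theorem together with the boundarylessness of $V$ allows us to find an admissible blow-up $\fV'\to \fV$ and a formal affine open $\fU\subset \fV'$ such that $W\coloneqq \fU_\eta$ is a compact strictly $K$-analytic neighborhood of $x$ (boundarylessness ensures that $W$ contains a Berkovich-open neighborhood of $x$ in $V$). Setting $\fL'\coloneqq \pi^*\fL$, the projection formula forces $\fL'$ to be nef on $\fV'$, and since $\fU_s$ is open in $\fV'_s$, every proper curve in $\fU_s$ is a proper curve in $\fV'_s$. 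Hence $\fL'|_\fU$ is nef on $\fU$, the identity $\metr|_W = \metr_{\fL'|_\fU}$ exhibits $\metr|_W$ as a semipositive formal metric, and so $\metr$ is semipositive at $x$ in the local sense.

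For the backward direction, assume $\metr$ is semipositive at every $x\in V$. Fix any formal model $(\fV,\fL)$ of $(V,L)$ with $\metr=\metr_\fL$; it suffices, in view of the definitions, to verify that $\fL$ is nef on $\fV$, i.e.\ that $\deg_\fL(C)\geq 0$ for every proper curve $C\subset \fV_s$ over $\tilde K$. By Lemma \ref{lemma semipositive base change} we may assume that $K$ is algebraically closed. Fix such a $C$, let $\eta$ be its generic point, and pick $x\in V$ with $\pi(x)=\eta$ under the reduction $\pi\colon V\to\fV_s$. By hypothesis we obtain a compact strictly $K$-analytic neighborhood $W$ of $x$ with $\metr|_W$ semipositive formal. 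Raynaud's theorem then provides an admissible blow-up $\fV'\to \fV$ and a formal open $\fU\subset \fV'$ with $W=\fU_\eta$, and Proposition \ref{model test} applied to $\metr|_W=\metr_{\fL'|_\fU}$ (where $\fL'\coloneqq \pi^*\fL$) shows that $\fL'|_\fU$ is nef on $\fU$.

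Let $C'\subset \fV'_s$ denote the strict transform of $C$; by the projection formula $\deg_\fL(C)=\deg_{\fL'}(C')$, so we must verify that $\deg_{\fL'}(C')\geq 0$. The point $\pi'(x)\in \fV'_s$ is the generic point of $C'$ and lies in $\fU_s$ because $x\in W=\fU_\eta$, so $\fU_s\cap C'$ is dense in $C'$. If $C'\subseteq \fU_s$, then nefness of $\fL'|_\fU$ yields the bound directly. The main obstacle is precisely the opposite case, in which the finite set $C'\setminus\fU_s$ is nonempty: there is no reason why the single neighborhood $W$, being merely a Berkovich-neighborhood of the divisorial point $x$, should engulf the tubular preimage $\pi^{-1}(C)$ in $V$. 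The fix is to apply the local hypothesis at several points of $V$ at once. For each $y\in C'\setminus \fU_s$ choose a $z_y\in V$ with $\pi'(z_y)=y$ and a compact neighborhood $W_y$ with $\metr|_{W_y}$ semipositive formal, pass to a common admissible blow-up $\fV''\to\fV'$ for which $W$ and all $W_y$ become the generic fibers of formal opens $\fU,\fU_y\subset \fV''$, and observe that the strict transform $C''\subset\fV''_s$ of $C$ is now covered by $\fU_s\cup\bigcup_y\fU_{y,s}$. Each local piece $\fL''|_{\fU_y}$ is nef by Proposition \ref{model test}; iterating this refinement finitely many times (since at each stage the set of missed closed points of $C$'s transform strictly shrinks), we end up with a formal open $\widetilde\fU$ in some admissible blow-up of $\fV$ whose special fiber contains the entire strict transform of $C$ and on which the restriction of the pullback of $\fL$ is nef. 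The projection formula then delivers $\deg_\fL(C)\geq 0$, completing the argument.
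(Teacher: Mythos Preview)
Your forward direction is fine (and in fact the paper dispatches it in one line). The backward direction, however, has a genuine gap, and the gap is precisely where the boundarylessness hypothesis should enter---you never use it there.

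The problem is the covering/iteration argument. Suppose after your first blow-up the strict transform $C'$ is not contained in $\fU_s$. You propose to cover the missing points by further neighborhoods $W_y$ and pass to a common refinement on which $C''$ is covered by $\fU_s\cup\bigcup_y \fU_{y,s}$. But knowing that the pullback of $\fL$ is nef on each $\fU$ and $\fU_y$ separately tells you nothing about $\deg_{\fL''}(C'')$: nefness of $\fL''|_{\fU}$ only controls degrees of proper curves lying \emph{entirely} in $\fU_s$, and $C''$ lies in no single one of your opens. The union $\fU\cup\bigcup_y \fU_y$ need not have $\fL''$ nef on it. So the step ``nefness on pieces $\Rightarrow$ nonnegative degree on $C''$'' fails. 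Your claim that the missed set ``strictly shrinks'' under iteration is also unjustified: each further blow-up can introduce new closed points on the transform not covered by any of the old opens, and there is no finiteness mechanism forcing termination. Finally, the projection-formula identity $\deg_\fL(C)=\deg_{\fL'}(C')$ for the strict transform is not quite right without accounting for exceptional components.

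The paper avoids all of this with a single application of boundarylessness. After arranging (as you do) that $W=\fW_\eta$ for a formal open $\fW\subset \fV$ with $\fL|_\fW$ nef, observe that because $V$ is boundaryless and $W$ is a Berkovich neighborhood of $x$, the point $x$ lies in the \emph{interior} of $W$ (the analytic boundary of $W$ equals its topological boundary in $V$). By \cite[Lemma~6.5.1]{chambert-loir-ducros}, interior points of $W$ are exactly those whose reduction has proper closure in $\fW_s$. Since $\pi(x)$ is the generic point of $C$, its closure is $C$ itself, so $C\subset \fW_s$ already. Then $\deg_\fL(C)\geq 0$ follows immediately from nefness of $\fL|_\fW$---no covering, no iteration, no strict transforms.
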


\begin{proof}
The proof follows mainly the arguments in \cite[Proposition 6.4]{GK2}. 
It is clear that a semipositive formal metric is a semipositive piecewise linear metric in every $x\in X$ and we will prove now the converse.
Let $(\fV,\fL)$ be a formal $\kcirc$-model of $(V,L)$ with $\metr = \metr_\fL$. 
{Since the generic fiber $V$ is separated, it follows from \cite[Proposition 4.7]{bosch-luetkebohmert-1} that the model $\fV$ is also separated and hence we may apply \cite[Lemma 6.5.1]{chambert-loir-ducros}. This is a criterion which characterizes the points $v$  in the relative interior ${\rm Int}(V)$ over the base $K$ (defined in \cite[Definition 1.5.4]{berkovich-ihes}) by the property that the closure of the reduction of $v$ in the special fiber $\fV_s$ is proper over $\ktilde$.}

We assume that $\metr$ is semipositive at every $x \in V$. We choose a closed curve $C$ in $\fV_s$ which is proper over $\ktilde$. We have to show that $\deg_\fL(C) \geq 0$. 
By surjectivity of the reduction map $\pi:V \to \fV_s$, there is $x \in V$ such that $\pi(x)$ is the generic point of $C$. 
{Using \cite[Lemma 6.5.1]{chambert-loir-ducros}, the properness of  $C$ yields that $x \in {\rm Int}(V)$.}  
Since $\metr$ is semipositive at $x$, there is a compact strictly $K$-analytic neighborhood $W$ of $x$, a nef formal $\kcirc$-model $(\fW,\fM)$ of $(W,L|_W)$ {and a non-zero $k \in \N$} such that $\metr^{{\otimes k}} = \metr_\fM$ over $W$.  Using Proposition \ref{model test}, we may always replace the models $\fW$ and $\fV$ by dominating formal $\kcirc$-models and the line bundles $\fM$ and $\fL$ by their pull-backs. By \cite[Corollary 5.4 (b)]{bosch-luetkebohmert2}\footnote{{Note the misprint in \cite[Corollary 5.4 (b)]{bosch-luetkebohmert2}: \emph{immersion} should be replaced by \emph{open immersion}.}}, we may therefore assume that $\fW$ is a formal open subset of $\fV$. Then $\fL|_\fW^{{\otimes k}}$ is also a formal $\kcirc$-model of $L|_W^{{\otimes k}}$ and hence Proposition \ref{model test} implies that $\fL|_\fW$ is nef. 

Since $W$ is a neighborhood of $x$ and since {${\rm Int}(W)$ is contained in the intersection of ${\rm Int}(V)$ with the topological interior of $W$ in $V$ by \cite[Proposition 1.5.5]{berkovich-ihes}, we conclude that $x \in {\rm Int}(W)$.}
 Using \cite[Lemma 6.5.1]{chambert-loir-ducros} again, the closure of the reduction of $x$ in $\fW_s$ is proper over $\ktilde$ and hence equal to $C$. 
Since $\fL|_\fW$ is nef, it follows that $\deg_\fL(C) \geq 0$. 
\end{proof}

\begin{prop} \label{convexity and max for psh}
Let $\| \ \|_1$ and $\| \ \|_2$ be   algebraic metrics of the line bundle $L$ over the proper scheme $X$ over $K$.  
Then $\| \ \| := \min ( \| \ \|_1, \| \ \|_2)$ is an algebraic metric on $L$. If $\metr_1$ and $\metr_2$ are semipositive at $x \in \Xan$, then 
$\metr$ is semipositive at $x$.
\end{prop}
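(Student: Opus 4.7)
For the first assertion, I appeal to Theorem \ref{comparison}, which identifies algebraic metrics on a proper scheme with piecewise linear metrics, and to Proposition \ref{properties for piecewise linear}(d), according to which the minimum of two piecewise linear metrics is again piecewise linear. Combining the two yields at once that $\metr = \min(\metr_1, \metr_2)$ is algebraic.

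For the semipositivity claim, the plan is to work in a neighbourhood of $x$ and produce a nef formal model of the minimum metric there. First I choose, using Definition \ref{local definition of semipositive formal metrics}, compact strictly $K$-analytic neighbourhoods $W_i$ of $x$ in $\Xan$ on which $\metr_i|_{W_i}$ is a semipositive formal metric, and I set $W := W_1 \cap W_2$; this is again a compact strictly $K$-analytic neighbourhood of $x$, and Proposition \ref{pullback of semipositive formal}(a) applied to the inclusions $W \hookrightarrow W_i$ ensures that each $\metr_i|_W$ remains a semipositive formal metric, so it suffices to produce a nef formal model for $\metr|_W$. Using Raynaud's theorem (see \ref{formal models}) together with Proposition \ref{model test} and pullback preservation of nefness (Proposition \ref{pullback of semipositive formal}(a)), I then pass to a single formal $\kcirc$-model $\mathfrak{W}$ of $W$ carrying nef formal models $\mathfrak{L}_1, \mathfrak{L}_2$ of $L|_W$ with $\metr_i|_W = \metr_{\mathfrak{L}_i}$.

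The next step is a standard blowing-up construction. Viewing the $\mathfrak{L}_i$ as $\mathcal{O}_\mathfrak{W}$-lattices inside $L|_W$ via the fixed generic-fibre identifications, I consider the coherent submodule $\mathfrak{L}_1 + \mathfrak{L}_2$ and perform an admissible formal blowing up $\varphi : \mathfrak{W}' \to \mathfrak{W}$ that makes it locally principal; this yields a line bundle $\mathfrak{L}$ on $\mathfrak{W}'$ together with two natural injections $\alpha_i : \varphi^* \mathfrak{L}_i \hookrightarrow \mathfrak{L}$ whose images jointly generate $\mathfrak{L}$. A local computation with frames $s_i$ of $\mathfrak{L}_i$ and the unit $f = s_2/s_1$ on the generic fibre checks $\metr_\mathfrak{L} = \min(\metr_{\mathfrak{L}_1}, \metr_{\mathfrak{L}_2}) = \metr|_W$. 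Writing each $\alpha_i$ as the canonical section of $\mathfrak{L} \otimes (\varphi^* \mathfrak{L}_i)^{-1} \cong \mathcal{O}_{\mathfrak{W}'}(E_i)$ for an effective Cartier divisor $E_i$ on $\mathfrak{W}'$, joint generation forces $\mathrm{Supp}(E_1) \cap \mathrm{Supp}(E_2) = \emptyset$.

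The final step is to check that $\mathfrak{L}$ is nef. For any closed curve $C$ in $\mathfrak{W}'_s$ proper over $\ktilde$, the disjointness of the supports forces $C \not\subset \mathrm{Supp}(E_i)$ for at least one $i \in \{1,2\}$; for such an $i$,
\[
\deg_\mathfrak{L}(C) \;=\; \deg_{\varphi^* \mathfrak{L}_i}(C) + (E_i \cdot C) \;\geq\; 0,
\]
since $\varphi^*\mathfrak{L}_i$ is nef (projection formula applied to the proper $\varphi_s$) and $(E_i \cdot C) \geq 0$ because $E_i$ is effective and meets $C$ properly. Hence $\mathfrak{L}$ is nef, and Proposition \ref{model test} concludes that $\metr|_W$ is a semipositive formal metric. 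The main obstacle in executing this plan is the clean simultaneous verification that the blowing up produces $\metr_\mathfrak{L} = \min(\metr_{\mathfrak{L}_1}, \metr_{\mathfrak{L}_2})$ and that $\mathrm{Supp}(E_1)$ and $\mathrm{Supp}(E_2)$ are disjoint; once these two geometric points are in place, the nefness argument is routine.
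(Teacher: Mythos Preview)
Your argument is correct and takes a genuinely different route from the paper's.

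The paper proceeds by first reducing to $K$ algebraically closed and $X$ a variety, then invokes the characterisation of local semipositivity from \cite[6.5]{GK2} in terms of degrees on curves in the reduction. The heart of the paper's proof is the curve--lifting theorem \cite[Theorem 4.1]{GK2}: given a curve $C$ in the special fibre one finds a closed curve $Y$ in $X$ whose closure has $C$ as a component, and thereby reduces to the one--dimensional case. One then passes to a canonical formal model with reduced special fibre, uses the unique point $\xi$ above the generic point of $C$, observes that $\metr(\xi)=\metr_i(\xi)$ for some $i$, and concludes via a meromorphic--section argument that $\deg_{\mathfrak L}(C)\geq \deg_{\mathfrak L_i}(C)\geq 0$.

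Your approach bypasses all of this: by blowing up the coherent submodule $\mathfrak L_1+\mathfrak L_2$ you manufacture a formal model $\mathfrak L$ of the minimum directly, and the key structural fact $\mathrm{Supp}(E_1)\cap\mathrm{Supp}(E_2)=\emptyset$ replaces the curve--lifting and reduced--fibre machinery. For any proper curve $C$ in the special fibre one of the effective divisors $E_i$ restricts to a nonzero section, giving $(E_i\cdot C)\geq 0$ and hence $\deg_{\mathfrak L}(C)\geq \deg_{\varphi^*\mathfrak L_i}(C)\geq 0$. This is more elementary and self--contained: it needs neither the base change to $\C_K$, nor the lifting theorem from \cite{GK2}, nor the passage to reduced special fibre. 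The two ``obstacles'' you flag are routine: the identification $\metr_{\mathfrak L}=\min(\metr_{\mathfrak L_1},\metr_{\mathfrak L_2})$ follows from a local frame computation on the two charts of the blow--up, and disjointness of the $\mathrm{Supp}(E_i)$ is immediate from the fact that in a local ring a principal ideal generated by two elements is already generated by one of them. One small point worth making explicit is that the blow--up is indeed admissible: tensoring by $\mathfrak L_1^{-1}$ turns $\mathfrak L_1+\mathfrak L_2$ into a vertical coherent fractional ideal, and multiplying by a suitable power of a uniformiser (using compactness of $W$) gives a coherent open ideal in $\mathcal O_{\mathfrak W}$.
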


\begin{proof}
Since formal and algebraic metrics are the same as noted in Remark \ref{algebaic vs formal metric} 
and hence also the same as piecewise linear metrics, we deduce from Proposition \ref{properties for piecewise linear} (d)
  that $\| \ \|$ is an algebraic metric. If the given metrics are semipositive at $x$, then it remains to prove 
 that $\| \ \|$ is semipositive at $x$. 
By base change again, we may assume that $K$ is algebraically closed. By Lemma \ref{irreducible components}, 
we may assume that $X$ is a proper variety over $K$.
Let us pick models $\Lcal_1$, $\Lcal_2$ and  $\Lcal$ of $L$ defining the {algebraic} metrics $\| \ \| $,  $\| \ \|_1$ and $ \| \ \|_2$.
There is an algebraic $\kcirc$-model $\Xcal$ of $X$ on which $\Lcal_1$, $\Lcal_2$ and  $\Lcal$ are defined.  
There is an open neighborhood $W$ of $x$ in $\Xan$ such that $\metr_1$ and $\metr_2$ are semipositive at all points of $W$. We will show that $\metr$ is semipositive at every point of $W$. 
By \cite[6.5]{GK2}, it is equivalent to show that $\deg_\Lcal(C) \geq 0$ for any closed curve $C$ of $\Xcal_s$ contained in the reduction of $W$. Moreover, the same result yields that $\Lcal_1$ and $\Lcal_2$ restrict to nef line bundles on $C$.
By  
\cite[Theorem 4.1]{GK2}, there is a closed curve $Y$ in $X$ such that $C$ is an irreducible component of the special fibre of the closure $\overline{Y}$ in $\Xcal$. 
By restriction, we may assume that $X=Y$ is a curve and hence $C$ is an irreducible component of $\Xcal_s$. 
Let $\fX$ be the formal completion of $\Xcal$  and let $\fL, \fL_1,\fL_2$ be the line bundles on $\fX$ induced by the pull-backs of $\Lcal,\Lcal_1,\Lcal_2$. 

We have seen in the proof of Proposition \ref{model test} that we can associate to $\fX$ a canonical  formal $K^\circ$-model $\fX'$ of $\Xan$ with reduced special fibre and a canonical finite surjective morphism $\iota:\fX' \to \fX$.
So there is a closed curve $C'$ in $\fX'_s$ which maps onto $C$ in $\fX_s=\Xcal_s$. 
Let $\fL',\fL_1',\fL_2'$ be the line bundles on $\fX'$ given by pull-back of $\fL,\fL_1, \fL_2$. 
Note that $\fL',\fL_1',\fL_2'$ are formal $K^\circ$-models of the metrics $\metr,\metr_1,\metr_2$ on $\Lan$. 
By the projection formula, the line bundles $\fL_1',\fL_2'$ restrict to nef line bundles on $C'$ and it remains to show that 
\begin{equation} \label{positivity to show for deg}
\deg_{\fL'}(C') \geq 0.
\end{equation}
Let $\zeta$ be the generic point of $C'$.
Then there is a unique point $\xi$ in $\Xan$ with reduction $\zeta$.
This follows from  \cite[Proposition 2.4.4]{berkovich-book} since $\zeta$ has a formal affine open neighborhood in $\fX'$ of the form $\Spf(\Acal^\circ)$ for a strictly $K$-affinoid algebra $\Acal$.
Using $\| \ \| = \min ( \| \ \|_1, \| \ \|_2)$, we may assume $\metr(\xi)=\metr_1(\xi)$. 
Since $\Lan$ is algebraic, there is a non-trivial meromorphic section $t$ of $\fL'$. 
Note that the restriction of $t$ to the generic fibre $\Lan$ induces also a meromorphic section $t_1$ of $\fL_1'$. 
The meromorphic section $t/t_1$ of ${\mathfrak M}:=\fL' \otimes (\fL_1')^{-1}$ restricts to the trivial section $1$ of $\KO_\Xan$ and we have 
\begin{equation*} \label{metric ineq} \|t/t_1\|_{\mathfrak M} =\|t\|/\|t_1\|_1=\|t\|/\|t\|_1 \leq 1.
\end{equation*}  
By \cite[Proposition 7.5]{gubler98}, we deduce that $t/t_1$ is a global section of $\mathfrak M$. The definition of formal metrics and $\|t/t_1\|_{\mathfrak M}(\xi) = \|t\|(\xi)/\|t\|_1(\xi)=1$ yield that $\{y \in \Xan \mid \|t/t_1\|_{\mathfrak M}(y)\geq 1\}$ is the generic fibre of a formal open neighborhood $\fU$ of $\zeta$. Hence \cite[Proposition 7.5]{gubler98} again shows that $t/t_1$ is a nowhere vanishing regular section of $\mathfrak M$ on $\fU$. We conclude that the restriction of the global section $t/t_1$ to $C'$ is not identically zero inducing an effective Cartier divisor $D$ on $C'$. This shows 
$$\deg_{\mathfrak M}(C')= \deg_D(C') \geq 0.$$ 
Using that $\fL_1'$ is nef on $C'$ and $\fL'={\mathfrak M}\otimes \fL_1'$, we get 
$$\deg_{\fL'}(C') \geq \deg_{\fL_1'}(C') \geq 0$$
proving \eqref{positivity to show for deg}.
\end{proof}

\section{Plurisubharmonic model functions}
\label{section psh model functions}
We will introduce closed $(1,1)$-forms $\theta$ on a proper scheme $X$ over $K$ and $\theta$-psh model functions following the terminology in \cite{BFJ1}.

\begin{art}
\label{model metric}
Let $L$ be a line bundle on $X$.
We say that a metric $\| \ \|$ on $L^\an$ is a \emph{model metric} if there is a non-zero $d\in \N$ 
such that ${\| \ \|}^{\otimes d}$ is an algebraic metric on $(\Lan)^{\otimes d}$.
By Proposition \ref{formal vs piecewise linear metrics} and Remark \ref{algebaic vs formal metric}, 
$\| \ \|$ is a model metric if and only if it is a piecewise $\Q$-linear metric.
\end{art}

\begin{art}
\label{definition model functions}

We say that a function $\varphi : X^\an \to \R$ is a \emph{model function}  
if there exists $d\in \N_{>0}$ and $\| \ \|$ an algebraic metric on $\mathcal{O}_{X^\an}$ such that 
$\varphi = -\frac{1}{d} \log \| 1 \|$.
If we can take $d=1$, we say that $\varphi$ is a \emph{$\Z$-model function}.
The set of model functions on $X$ is denoted  by $\mathcal{D}(X)$. 
\end{art}

\begin{art} \label{lemma model function Cartier}
Let $\Xcal$ be an algebraic $\Ko$-model of $X$. 
A vertical Cartier divisor on $\Xcal$ is a Cartier divisor $D$  on $\Xcal$ which is supported on the special fiber $\Xcal_s$.
A vertical Cartier divisor $D$ on $\Xcal$ determines a model $\Ocal(D)$ of $\Ocal_X$  hence an associated model function 
\begin{equation*} 
\varphi_D \coloneqq -\log \| 1 \|_{\Ocal(D)} : X^\an \to \R
\end{equation*}
Note that every $\Z$-model function has this form. 
Indeed, if $\Lcal$ is an algebraic $K^\circ$-model of $\Ocal_X$ with $\varphi = - \log \metr_\Lcal$, then the section $1$ of $\Ocal_X$ extends to a meromorphic section $s$ of $\Lcal$ and the vertical Cartier divisor $D := \div(s)$ satisfies $\varphi=\varphi_D$.

{For example, the constant functions on $\Xan$ with values in the value group $\Gamma = - \log |K^\times|$ are 
the $\Z$-model functions of the form $\varphi_D$ with $D= \div(\alpha)$ for non-zero elements $\alpha \in K$. Moreover, the constant functions on $\Xan$ with values in $\Q\Gamma$ are $\Q$-model functions.}
\end{art}

\begin{art}
We set $\Pic(\Xcal)_\R \coloneqq \Pic(\Xcal)\otimes_\Z \R$.
We define the \emph{N\'{e}ron-Severi} group as the $\R$-vector space $\Pic(\Xcal)_\R$ modulo the subspace generated by numerically trivial line bundles. 
We   denote this space by $N^1(\Xcal / S)$, where $S := \Spec(\kcirc)$.
The space of \emph{closed $(1,1)$-forms} on $X$ is defined as the direct limit 
\[ \mathcal{Z}^{1,1}(X) \coloneqq \varinjlim N^1(\Xcal / S)\]
where the limit is taken over all algebraic $\kcirc$-models of $X$.
We say that a closed $(1,1)$-form $\theta$ is \emph{determined on some model $\Xcal$} if it is in the image of the map 
$N^1(\Xcal / S) \to  \mathcal{Z}^{1,1}(X)$.
\end{art}

\begin{art}
\label{curvature}
We denote by $\Pichat(X)$ the group of isomorphism classes of line bundles on $X$ equipped with a model metric. 
There is a  map $c_1 : \Pichat(X) \to \mathcal{Z}^{1,1}(X)$ which sends the class of 
$(L,\| \ \|_\KL)$ to the class of $\KL$. 
We will show below that this map is well defined. 
We denote its image by $c_1(L,\| \ \|_\KL)$ and call it the \emph{curvature form} of $(L, \| \ \|_\KL)$. 
We get a natural linear map $dd^c: \mathcal{D}(X) \to \mathcal{Z}^{1,1}(X)$ which maps a $\Z$-model function $\varphi$ to $c_1(\Ocal_\Xan,\| \ \|_\varphi)$, where $\metr_\varphi$ is the corresponding algebraic metric on $\Ocal_\Xan$\footnote{{For a $dd^c$-lemma, see \cite[Theorem 4.3]{BFJ1} in the discretely valued case and \cite[Theorem 4.2.7]{Jell-thesis} for a generalization to non-discrete valuations.}}.

To show that the curvature $c_1(L,\| \ \|_\KL)$ is well defined in $\mathcal{Z}^{1,1}(X)$, we consider algebraic $K^\circ$-models $\Lcal,\Lcal'$ of $L$ with $\metr_\Lcal=\metr_{\Lcal'}$. 
We have to show that $\Lcal$ and $\Lcal'$ are numerically equivalent on a suitable algebraic $\kcirc$-model of $X$. 
Since the isomorphism classes of algebraic $\kcirc$-models of $X$ form a directed set, we may assume that $\Lcal$ and $\Lcal'$ are line bundles on the same algebraic $\kcirc$-model $\Xcal$. 
Using base change to $\C_K$, Lemma \ref{lemma semipositive base change} shows that we may assume $K$ algebraically closed. By Lemma \ref{semipositive reduced}, we may assume that $\Xcal$ is reduced. Then  the formal completion $\fX$  of $\Xcal$ is also reduced. We consider the canonical formal $\kcirc$-model $\fX'$ with reduced special fibre and with a  finite surjective morphism $\fX' \to \fX$ extending $\id_{\Xan}$ as in the proof of Proposition \ref{model test}.
We may apply \cite[Proposition 7.5]{gubler98} to the line bundles $\fL,\fL'$ on $\fX'$ induced by $\Lcal,\Lcal'$. Since $\metr_\fL=\metr_\Lcal=\metr_{\Lcal'}=\metr_{\fL'}$, we deduce that $\fL \cong \fL'$. Using that $\fX_s=\Xcal_s$, the projection formula applied to the finite surjective morphism $\fX_s'\to \fX_s$ shows that $\Lcal$ is numerically equivalent to $\Lcal'$.
\end{art}

\begin{art} \label{positive forms}
We say that an element of $N^1(\Xcal /S)$ is \emph{ample} if it is the class of a non-empty sum $\sum_i a_i c_1(\Lcal_i)$ 
for some real numbers $a_i >0$ and some ample line bundles $\Lcal_i$. 
For an algebraic $K^\circ$-model $\Xcal$ of $X$,  a closed $(1,1)$-form $\theta$ is called
{\it $\Xcal$-positive} if 
{it is determined on $\Xcal$ by some 
$\theta_\Xcal \in N^1(\Xcal /S)$  which is ample.}
We say that a model metric $\metr$ of  a line bundle $L$ is 
{\it $\Xcal$-positive} if the same holds for the curvature form $c_1(L,\metr)$.
We say that an element $\theta \in N^1(\Xcal / S)$ is nef if $\theta \cdot C\geq 0$ for any closed curve $C \subset \Xcal_s$. 
A closed $(1,1)$-form $\theta$ is said to be \emph{semipositive} if 
it is determined by a nef class $\theta_\Xcal \in N^1(\Xcal /S)$ on a model $\Xcal$.

If $\theta$ is a closed $(1,1)$-form, we say that a model function $\varphi$ is 
\emph{$\theta$-plurisubharmonic} (briefly \emph{$\theta$-psh}) if 
$\theta +dd^c\varphi$ is semipositive. 
If $\theta$ is the closed $(1,1)$-form associated with some line bundle $\Lcal$ on $\Xcal$  
and if $D$ is a vertical Cartier divisor on $\Xcal$, then by definition $\varphi_D$ is a $\theta$-psh function if and only if 
$\Lcal \otimes \Ocal(D)$ is nef if and only if $\| \ \|_{\Lcal \otimes \Ocal(D)}$ is a semipositive metric.
\end{art}

\begin{art}
\label{semipositive metric vs psh}
Let $L$ be a line bundle on $X$.
Let $\| \ \|$ be a model metric on $L^\an$ and $\theta \coloneqq c_1(L, \| \ \|)$.
Let $\| \ \|'$ be another metric on $L^\an$ and let $\varphi \coloneqq -\log( \| \ \|' / \| \ \|)$.
Then $\| \ \|'$ is a model metric if and only if $\varphi$ is a model function. 
Moreover $\| \ \|'$ is a semipositive model metric if and only if $\varphi$ is a $\theta$-psh model function. 
\end{art}

\begin{art} \label{vertical nef and generic fibre}
The  N\'eron--Severi group $N^1(X)$ of $X$ is the group $\Pic(X)\otimes_\Z \R$ modulo the subspace generated by the numerically trivial line bundles.
For a closed $(1,1)$-form $\theta$, let $\{\theta\}$ be the associated de Rham class, given by $\{\theta\}=\theta_\Xcal|_X \in N^1(X)$ for any algebraic $\kcirc$-model $\Xcal$  on which $\theta$ is determined by $\theta_\Xcal \in N^1(\Xcal / S)$. 
If $\theta$ is semipositive, then $\{\theta\}$ is nef. 

To see this, we choose any closed curve $C$ in $X$ and non-zero $\rho$ in the maximal ideal of the valuation ring $\kcirc$. 
Then using the divisorial intersection theory  in \cite{gubler98}, we have
$$v(\rho)\deg_{\{\theta\}}(C) =  \deg(\Div(\rho) . \theta_\Xcal . \overline{C}) = \deg(\theta_\Xcal . \Div(\rho) . \overline{C}) = v(\rho)\deg_{\theta_\Xcal}(\overline{C}_s).$$
Since $\theta_\Xcal$ is 
nef, the degree of the special fibre $\overline{C}_s$ of the closure $\overline{C}$  in $\Xcal$ 
is non-negative proving the claim.
\end{art}

\begin{lem} \label{model functions and multiplicities}
Let us assume that $K$ is algebraically closed.
Let $\varphi$ be a  model function determined by a vertical Cartier divisor $D$ on the algebraic $\kcirc$-model $\Xcal$ of $X$. We assume that the special fibre $\Xcal_s$ is reduced. 
Then $\varphi \geq 0$ if and only if the Cartier divisor $D$ is effective. 
\end{lem}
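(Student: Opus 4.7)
My plan is to prove the two implications separately, with the nontrivial direction going through the formal completion of $\Xcal$ and Proposition 7.5 of \cite{gubler98}.

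For the easy direction, assume $D$ is effective. Then the canonical meromorphic section $s_D$ of $\Ocal_\Xcal(D)$ is a global regular section. Since $D$ is vertical, $D|_X = 0$, so there is a canonical identification $\Ocal_\Xcal(D)|_X \cong \Ocal_X$ under which $s_D|_X$ corresponds to the trivial section $1$ of $\Ocal_X$. By the very definition of the model metric $\metr_{\Ocal(D)}$ (Definition \ref{formal metric}, applied to the formal completion of $\Ocal_\Xcal(D)$), global regular sections have sup-norm at most $1$; hence $\|1\|_{\Ocal(D)} \leq 1$ on $X^\an$, and $\varphi_D = -\log \|1\|_{\Ocal(D)} \geq 0$.

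For the hard direction, assume $\varphi_D \geq 0$, which under the same identifications means $\|s_D\|_{\Ocal(D)} \leq 1$ on $X^\an$. Let $\fX$ be the formal completion of $\Xcal$ along its special fibre, so $\fX_s = \Xcal_s$ is reduced. The Cartier divisor $D$ pulls back to a vertical Cartier divisor $\hat D$ on $\fX$, and its canonical section yields a meromorphic section $\hat s_D$ of $\Ocal_\fX(\hat D)$ of sup-norm at most $1$ on $X^\an$. Since $K$ is algebraically closed and $\fX_s$ is reduced, \cite[Proposition 7.5]{gubler98} applies (as was already used in the proofs of Proposition \ref{model test} and Proposition \ref{convexity and max for psh}) and tells us that $\hat s_D$ is in fact a global regular section of $\Ocal_\fX(\hat D)$. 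Equivalently, $\hat D$ is effective on $\fX$.

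It remains to descend effectivity from $\fX$ back to $\Xcal$. Since $D$ is vertical, effectivity is automatic at points of the generic fibre, so one only needs to check at points $x \in \Xcal_s$, where it amounts to saying that the local equation $f \in K(X)^\times$ of $D$ at $x$ lies in $\Ocal_{\Xcal,x}$. From the previous step we know $f \in \Ocal_{\fX,x}$, and using the (faithfully flat) completion map $\Ocal_{\Xcal,x} \to \Ocal_{\fX,x}$ together with the identification $\Ocal_{\Xcal,x} = \Ocal_{\fX,x} \cap K(X)$, one concludes $f \in \Ocal_{\Xcal,x}$. The main obstacle is this final descent: because $K$ being algebraically closed forces $K^\circ$ to be non-noetherian, the usual completion theorems must be handled with care via the finite-presentation structure of $\Xcal$ over $K^\circ$. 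An equivalent route that sidesteps this is to compute directly that $\varphi_D(\xi_Y) = \ord_Y(D)$ at the divisorial point $\xi_Y$ associated to each irreducible component $Y$ of the reduced scheme $\Xcal_s$, and then argue that the combined non-negativity at all such $Y$ (together with the formal effectivity of $\hat D$) forces the local equation to be regular.
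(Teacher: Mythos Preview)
Your easy direction is fine, and your reduction of the hard direction to the formal completion via \cite[Proposition 7.5]{gubler98} is essentially the same content as the paper's appeal to \cite[Proposition A.7]{gubler-rabinoff-werner}. The problem is the last paragraph: the descent of effectivity from $\fX$ to $\Xcal$ is where all the work lies, and you have not actually carried it out.

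Concretely, you invoke a ``faithfully flat completion map $\Ocal_{\Xcal,x} \to \Ocal_{\fX,x}$'' and an identification $\Ocal_{\Xcal,x} = \Ocal_{\fX,x} \cap K(X)$, but neither is justified, and in the non-noetherian situation (which, as you correctly note, is unavoidable here since $K$ is algebraically closed) these statements are far from automatic. Your alternative route via $\varphi_D(\xi_Y) = \ord_Y(D)$ does not help either: non-negativity of the associated Weil divisor only forces effectivity of the Cartier divisor $D$ when $\Xcal$ is normal, which is not assumed, and appending ``together with the formal effectivity of $\hat D$'' just restates the descent problem you are trying to solve.

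The paper closes this gap cleanly by rephrasing effectivity of $D$ as an inclusion $a\,\Ocal_\Xcal(-D) \subset a\,\Ocal_\Xcal$ of \emph{coherent} subsheaves of $\Ocal_\Xcal$ (the scalar $a \in K^\circ \setminus \{0\}$ is chosen so that $D + \div(a)$ is effective, which is possible since $D$ is vertical). Such an inclusion can be tested after formal completion because the completion functor is fully faithful on coherent sheaves by the formal GAGA of Fujiwara--Kato \cite[Theorem I.10.1.2]{fujiwara-kato-1}. This is exactly the non-noetherian substitute for the local-ring argument you were reaching for.
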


\begin{proof}
By definition of a Cartier divisor, $\mathcal{O}_{\Xcal} (-D)$ is a coherent subsheaf of the sheaf of meromorphic functions on $\Xcal$, 
and $D$ is effective if and only if $\mathcal{O}_{\Xcal} (-D)$ happens to be a subsheaf of $\mathcal{O}_{\Xcal}$. 
Since $D$ is a vertical Cartier divisor, there exists $a \in K^\circ \setminus \{0\}$ such that $D+\div(a)$  is effective, i.e. 
$a\mathcal{O}_{\Xcal} (-D)$ is a subsheaf of $\mathcal{O}_{\Xcal}$.
Now   $D$ is effective if and only if $a\mathcal{O}_{\Xcal} (-D)$ is a subsheaf of $a\mathcal{O}_{\Xcal}$ when they are both considered as coherent subsheaves of $\mathcal{O}_{\Xcal}$.
Since the completion functor is fully faithful on the category of coherent sheaves by \cite[Proposition I.9.4.2]{fujiwara-kato-1}, 
it is equivalent to check the associated inclusion on the formal completion of $\hat{\Xcal}$.

We now get the claim from the corresponding statement for admissible formal schemes which is proven in \cite[Proposition  A.7]{gubler-rabinoff-werner} and hence applies to the formal completion $\hat{\Xcal}$ of $\Xcal$ and its Cartier divisor $\hat{D}$ given by pull-back of $D$. 
\end{proof}

\begin{rem} \label{model functions and multiplicities for normal}
If we assume that $\Xcal$ is normal instead of assuming that $\Xcal_s$ is reduced, then Lemma \ref{model functions and multiplicities}  holds for any non-archimedean field $K$ (see \cite[Corollary 2.12]{GS15}). 
\end{rem}

We recall the following result from  \cite[Corollary 1.5]{BFJ1}. For convenience of the reader and to check that no noetherian hypotheses are used, we give here a proof. 

\begin{prop} \label{extension of ampleness}
Let $L$ be an ample line bundle on the projective scheme $X$ over $K$ 
and let $\Xcal_0$ be any algebraic $\kcirc$-model of $X$.  
Then there is an algebraic $\kcirc$-model $\Xcal$ of $X$ dominating $\Xcal_0$ and an ample line bundle $\Lcal$ on $\Xcal$ which is a $\kcirc$-model of $L^{\otimes m}$ for a suitable $m \in \N$.
\end{prop}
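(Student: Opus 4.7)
My plan has three main steps: first, reduce to the case where $L$ is very ample; second, build a projective $\kcirc$-model $\Ycal$ of $X$ with an ample line bundle extending a power of $L$, by closing up a projective embedding; third, produce the desired model $\Xcal$ dominating $\Xcal_0$ by a scheme-theoretic closure construction inside the product $\Xcal_0 \times_{\kcirc} \Ycal$.

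In detail, after replacing $L$ by $L^{\otimes m_0}$ for $m_0$ large enough, I may assume $L$ is very ample, so that it defines a closed immersion $\iota : X \hookrightarrow \P^N_K$ with $\iota^*\Ocal(1) = L$. Let $\Ycal$ be the scheme-theoretic closure of $\iota(X)$ in $\P^N_{\kcirc}$. Then $\Ycal$ is proper and flat over $\kcirc$ (scheme-theoretic closures of subschemes of the generic fibre of a flat $\kcirc$-scheme are flat), has generic fibre $X$, and the very ample line bundle $\Mcal := \Ocal_{\P^N_{\kcirc}}(1)\vert_{\Ycal}$ satisfies $\Mcal\vert_X = L$. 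Now let $\Xcal$ be the scheme-theoretic closure of $X$, embedded diagonally via $(\Xcal_0)_\eta = X = \Ycal_\eta$, in $\Xcal_0 \times_{\kcirc} \Ycal$. It is a flat proper $\kcirc$-model of $X$, and the projections $p : \Xcal \to \Xcal_0$ and $q : \Xcal \to \Ycal$ both restrict to $\id_X$ on the generic fibre; in particular $\Xcal$ dominates $\Xcal_0$. The composition $\Xcal \hookrightarrow \Xcal_0 \times_{\kcirc} \Ycal \hookrightarrow \Xcal_0 \times_{\kcirc} \P^N_{\kcirc}$ is a closed immersion, which makes $p$ projective and exhibits $\Lcal := q^*\Mcal$ as a $p$-very ample line bundle on $\Xcal$ with $\Lcal\vert_X = L$.

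The principal obstacle is to upgrade the $p$-ampleness of $\Lcal$ to absolute ampleness on $\Xcal$ while preserving the restriction to a power of $L$ on $X$. My plan is to first replace $\Xcal_0$ by a dominating projective $\kcirc$-model, applying Chow's lemma for proper morphisms over the quasi-compact quasi-separated base $\Spec(\kcirc)$; after this replacement, $\Xcal$ itself is projective over $\kcirc$ (closed in the product of two projective $\kcirc$-schemes), and the standard relative-to-absolute ampleness criterion gives that $\Lcal^{\otimes k} \otimes p^*\Hcal$ is ample on $\Xcal$ for $k$ sufficiently large and $\Hcal$ ample on the new $\Xcal_0$. The residual subtlety of making the restriction to $X$ a pure power of $L$ can be arranged by choosing the Chow dominator of $\Xcal_0$ to itself come from the construction of step two, i.e.\ as a scheme-theoretic closure of $X$ under an embedding by some power of $L$, so that the ample line bundle $\Hcal$ already extends a power of $L$ on $X$. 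The directedness of the set of algebraic $\kcirc$-models of $X$ makes such a compatible choice possible.
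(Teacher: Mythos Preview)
Your approach is essentially the same as the paper's: produce a projective model $\Ycal$ carrying an ample extension $\Mcal$ of a power of $L$ by closure in $\P^N_{\kcirc}$, form the diagonal closure $\Xcal$ in $\Xcal_0 \times_{\kcirc} \Ycal$, reduce to the case where $\Xcal_0$ is projective, and upgrade the relative ampleness of $q^*\Mcal$ to absolute ampleness. The paper organizes this slightly differently---it first replaces $\Xcal_0$ by a dominating projective model (citing \cite[Proposition~10.5]{gubler03} rather than Chow's lemma) and isolates the last step as Lemma~\ref{precise extension of ampleness}---but the skeleton is identical.

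Your treatment of the ``residual subtlety'' has a gap. You want the projective dominator of $\Xcal_0$ to carry an ample $\Hcal$ that already restricts to a power of $L$. Your proposal is to choose this dominator as a closure of $X$ under a projective embedding by a power of $L$; but such a closure is just another $\Ycal'$, and there is no reason it dominates $\Xcal_0$. Directedness only produces a common refinement of $\Xcal_0$ and $\Ycal'$, which is again merely proper with no controlled ample line bundle, so you are back where you started and the argument is circular. The paper avoids this by reversing the roles of the two projections: after making $\Xcal_0$ projective (with some uncontrolled ample), it applies Lemma~\ref{precise extension of ampleness} with $f$ the projection to $\Ycal$, the factor that already carries the ample extension of $L^{\otimes m_1}$; the content of that lemma is precisely that an ample extension of $L^{\otimes m}$ can then be found on $\Xcal$.
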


\begin{proof}
Every algebraic $\kcirc$-model of a projective scheme  $X$ is dominated by a projective algebraic $\kcirc$-model \cite[Proposition 10.5]{gubler03}.
Hence we may assume that $\Xcal_0$ is projective. There is $m_1 \in \N$ and a closed immersion of $X$ into $\P_K^N$ such that $L^{\otimes m_1} = \Ocal_{\P_K^N}(1)|_X$. 
Then the schematic closure of $X$ in $\P_\kcirc^N$ is an algebraic $\kcirc$-model $\Xcal_1$ of $X$.
Moreover, $\Xcal_1$ 
has an ample line bundle $\Lcal_1$ such that $\Lcal_1|_X=L^{\otimes m_1}$. 
Then the schematic closure of the diagonal in $\Xcal_0 \times_\kcirc \Xcal_1$ is a projective $\kcirc$-model $\Xcal$ of $X$.  Now the claim follows from the following lemma applied to $f:=p_1$. 
\end{proof} 

\begin{lem} \label{precise extension of ampleness}
Let $f:\Xcal \to \Xcal_1$ be a morphism of projective algebraic $\kcirc$-models of $X$ extending $\id_X$ and assume that $L^{\otimes m_1}$ extends to an ample line bundle $\Lcal_1$ on $\Xcal_1$ for some  $m_1 \in \N$. 
Then there is a positive multiple $m$ of $m_1$ such that $L^{\otimes m}$ extends to an ample line bundle on $\Xcal$. 
\end{lem}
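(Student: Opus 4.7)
The plan is to obtain the ample extension on $\Xcal$ by combining an $f$-ample line bundle with a sufficiently high power of $f^*\Lcal_1$, and to arrange for the generic-fibre restriction to be a pure power of $L$ by taking the $f$-ample line bundle to be vertical on $X$.

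First I would produce an $f$-ample line bundle on $\Xcal$. Since $\Xcal$ and $\Xcal_1$ are both projective over $\kcirc$, the morphism $f$ is projective, so there is a closed $\Xcal_1$-immersion $\Xcal \hookrightarrow \P^N_{\Xcal_1}$. The pullback of the tautological bundle then gives an $f$-very ample $\Ncal$ on $\Xcal$. By the standard criterion that combines relative ampleness with ampleness on the base (EGA II, 4.6.13; or Hartshorne II, Prop.~7.10), there exists $k_0 \in \N$ such that $\Ncal \otimes f^*\Lcal_1^{\otimes k}$ is ample on $\Xcal$ for all $k \geq k_0$. This criterion reduces, via projective embeddings and Nakai--Moishezon on the two fibres, to statements about projective schemes over the residue field and over $K$, so the possible non-noetherianity of $\kcirc$ is not an essential obstacle.

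Second, the restriction of $\Ncal \otimes f^*\Lcal_1^{\otimes k}$ to $X$ is $(\Ncal|_X) \otimes L^{\otimes k m_1}$, which is only a power of $L$ when $\Ncal|_X$ is itself a power of $L$. The idea is to replace $\Ncal$ by $\Ocal_\Xcal(-E)$ for a vertical effective Cartier divisor $E$ on $\Xcal$ such that $\Ocal_\Xcal(-E)$ is $f$-very ample. Since $f$ is projective and birational (it extends $\id_X$), Raynaud--Gruson applied to $f$ allows us to realize $f$ (after possibly replacing $\Xcal$ by a dominating model, which we then argue to be compatible with the given $\Xcal$ via a vertical blow-up) as the blow-up of $\Xcal_1$ in a coherent vertical ideal $\Jcal \subset \Ocal_{\Xcal_1}$; the exceptional Cartier divisor $E$ of such a blow-up is vertical, and $\Ocal(-E)$ is $f$-very ample by construction. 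Because $E$ is supported in the special fibre, $\Ocal_\Xcal(-E)|_X = \Ocal_X$, so the restriction of $\Ocal_\Xcal(-E) \otimes f^*\Lcal_1^{\otimes k}$ to $X$ is exactly $L^{\otimes k m_1}$, giving a positive multiple $m = k m_1$ of $m_1$ as required.

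The hard part will be rigorously producing an $f$-ample line bundle on $\Xcal$ itself whose restriction to the generic fibre is trivial, without having to pass to an even larger dominating model of $X$. Equivalently, one needs the existence on $\Xcal$ of a vertical Cartier divisor $E$ such that $\Ocal(-E)$ is $f$-ample; any $f$-very ample bundle coming from an arbitrary embedding $\Xcal \hookrightarrow \P^N_{\Xcal_1}$ has some uncontrolled restriction to $X$, and one must twist by an $f^*$-pullback from $\Xcal_1$ to correct this. Since $f^*$-pullbacks preserve $f$-ampleness and alter the restriction to $X$ by the corresponding line bundle on $X$, the problem reduces to extending a single explicit line bundle on $X$ to $\Xcal_1$ in a way compatible with the projective structure; this is where the hypothesis that $\Lcal_1$ is ample on $\Xcal_1$ (rather than merely nef) and the Raynaud--Gruson / formal GAGA techniques used earlier in Section~\ref{section formal and piecewise} enter decisively.
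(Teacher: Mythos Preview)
Your core strategy coincides with the paper's: take an $f$-very ample sheaf on $\Xcal$ coming from a closed immersion $\Xcal \hookrightarrow \P^k_{\Xcal_1}$, and tensor with a high power of $f^*\Lcal_1$ to make it ample on $\Xcal$. The paper's proof is exactly this and nothing more: it sets $\Ecal := \Ocal_{\P^k_{\Xcal_1}}(1)|_\Xcal$, observes that $\Lcal := f^*(\Lcal_1)^{\otimes m_2} \otimes \Ecal$ is ample for suitable $m_2$, and then declares $\Lcal$ to be an ample $\kcirc$-model of $L^{\otimes m}$ with $m = m_1 m_2$, full stop.

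You go further than the paper by flagging that $\Ecal|_X$ has no a priori reason to be trivial, so that $\Lcal|_X = L^{\otimes m_1 m_2} \otimes \Ecal|_X$ is not obviously a power of $L$. This is a genuine point that the paper's proof does not address; the paper simply asserts the final sentence. Your instinct to replace $\Ecal$ by $\Ocal_\Xcal(-E)$ for a \emph{vertical} effective Cartier divisor $E$ is the right repair, since then $\Ecal|_X = \Ocal_X$ is automatic.

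Where your argument becomes shakier is in producing such an $E$. Invoking Raynaud--Gruson and formal GAGA is heavier machinery than needed, and as you yourself note, it threatens to replace $\Xcal$ by a dominating model rather than working on $\Xcal$ itself, which the lemma does not permit. The cleaner observation is that $f$ is a projective morphism which is an isomorphism over the open subset $X \subset \Xcal_1$; a standard result (valid for qcqs targets without noetherian hypotheses) then says $f$ is itself the blow-up of $\Xcal_1$ in a closed subscheme disjoint from $X$, hence contained in the special fibre. The exceptional divisor of that blow-up is the vertical $E$ you want, and $\Ocal_\Xcal(-E)$ is $f$-very ample with trivial restriction to $X$. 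With this in hand, the paper's short argument goes through verbatim.
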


\begin{proof}
Note that $f$ is a projective morphism, hence there is a closed immersion of $\Xcal$ into a projective space $\P_{\Xcal_1}^k$ over $\Xcal_1$. Let $\Ecal$ be the restriction of $\Ocal_{\P_{\Xcal_1}^k}(1)$ to $\Xcal$. Since $\Ecal$ is relatively ample with respect to $f$ and since  $\Lcal_1$ is an ample line bundle on $\Xcal_1$, there is $m_2 \in \N$ such that $\Lcal :=f^*(\Lcal_1)^{\otimes m_2} \otimes \Ecal$ is ample on $\Xcal$. Then $\Lcal$ is an ample $\kcirc$-model of $L^{\otimes m}$ for $m:=m_1m_2$.
\end{proof}

\begin{prop} \label{perturbation of positivity}
Let $\omega$ be  $\Xcal$-positive and let $\theta$ be any closed $(1,1)$-form determined on $\Xcal$. 
Then $\omega + \ve \theta$ is $\Xcal$-positive for $\ve \in \R$ sufficiently close to $0$.
\end{prop}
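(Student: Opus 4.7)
The plan is to represent both $\omega$ and $\theta$ by line bundles on $\Xcal$ and reduce the question to the openness of the ample cone, applied along the direction of an ample line bundle appearing in the expansion of $\omega$.

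By definition one may write $\omega_\Xcal = \sum_{i=1}^{r} a_i\, c_1(\Lcal_i)$ in $N^1(\Xcal/S)$ with $a_i > 0$ and $\Lcal_i$ ample on $\Xcal$, and pick any representative $\theta_\Xcal = \sum_{j=1}^{s} b_j\, c_1(\Mcal_j)$ in $\Pic(\Xcal)_\R$ with $b_j \in \R$ and $\Mcal_j$ line bundles on $\Xcal$; after discarding vanishing terms one may assume all $b_j \neq 0$. The key input will be the following ``openness of ampleness'' statement: for any line bundle $\Mcal$ on $\Xcal$, both $\Lcal_1^{\otimes N} \otimes \Mcal$ and $\Lcal_1^{\otimes N} \otimes \Mcal^{\vee}$ are ample on $\Xcal$ once $N \in \N$ is sufficiently large. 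Applying it to each $\Mcal_j$ produces integers $N_j$ and ample line bundles $\Pcal_j^{\pm} := \Lcal_1^{\otimes N_j} \otimes \Mcal_j^{\pm 1}$. Rewriting $c_1(\Mcal_j) = c_1(\Pcal_j^+) - N_j\, c_1(\Lcal_1)$ when $b_j > 0$ and $c_1(\Mcal_j) = N_j\, c_1(\Lcal_1) - c_1(\Pcal_j^-)$ when $b_j < 0$, and summing, yields the decomposition
\[
\theta_\Xcal = \sum_{j=1}^{s} |b_j|\, c_1(\Pcal_j^{\sgn(b_j)}) - T\, c_1(\Lcal_1), \qquad T := \sum_{j=1}^{s} |b_j| N_j \geq 0,
\]
together with an analogous decomposition for $-\theta_\Xcal$ featuring the same $T$ but with $\Pcal_j^{-\sgn(b_j)}$ in place of $\Pcal_j^{\sgn(b_j)}$. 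Inserting the appropriate one into $\omega_\Xcal + \ve\theta_\Xcal$ according to the sign of $\ve$ gives, for $|\ve|$ small enough that $|\ve| T < a_1$, an expression
\[
\omega_\Xcal + \ve\theta_\Xcal = (a_1 - |\ve| T)\, c_1(\Lcal_1) + \sum_{i \geq 2} a_i\, c_1(\Lcal_i) + |\ve|\sum_{j=1}^{s} |b_j|\, c_1\bigl(\Pcal_j^{\sgn(b_j)\sgn(\ve)}\bigr),
\]
which is a non-empty strictly positive real combination of Chern classes of ample line bundles on $\Xcal$, and hence ample in $N^1(\Xcal/S)$. This gives the $\Xcal$-positivity of $\omega + \ve\theta$.

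The main obstacle is the ``openness of ampleness'' input used above. Over a field it is a classical consequence of the fact that the ample cone is open in the finite-dimensional N\'eron--Severi space. For $\Xcal$ proper over the (possibly non-noetherian) valuation ring $\kcirc$, one applies the fibral criterion for relative ampleness of a proper morphism: since $\Spec\kcirc$ has only two points, ampleness of a line bundle on $\Xcal$ is equivalent to ampleness on both the generic fibre $X$ and the special fibre $\Xcal_s$, where the openness of the ample cone is the classical statement over a field. This reduces the key input to two instances of the classical fact and thereby settles the proposition.
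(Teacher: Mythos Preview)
Your proof is correct and rests on the same core inputs as the paper's: the fibral criterion for ampleness over $\Spec(\kcirc)$ (EGA~IV$_3$, 9.6.4--9.6.5) together with the openness of the ample cone over a field. The paper packages these two facts more directly, arguing that since $\Spec(\kcirc)$ is affine and local with closed point the special point, ampleness of a class in $N^1(\Xcal/S)$ can be tested on the special fibre alone; there one simply invokes that the ample cone in $N^1(\Xcal_s)$ is the interior of the nef cone, so $\omega_s + \ve\theta_s$ stays ample for small $\ve$. Your argument instead unfolds this openness into an explicit rewriting of $\omega_\Xcal + \ve\theta_\Xcal$ as a positive combination of first Chern classes of ample line bundles on $\Xcal$, by absorbing each $c_1(\Mcal_j)$ into a twist $\Lcal_1^{\otimes N_j}\otimes\Mcal_j^{\pm 1}$. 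This is a valid and pleasantly constructive variant; it has the virtue that the ``$\R$-ampleness on $\Xcal$'' conclusion is visibly a positive combination of honest ample line bundles, whereas the paper's one-line reduction leaves that passage from $\R$-classes back to integral ample bundles implicit. One minor simplification: in your fibral check you need only the special fibre, not both, since ampleness on the closed fibre already forces relative ampleness over all of $\Spec(\kcirc)$.
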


\begin{proof}
Since $\Spec(\kcirc)$ is affine, ample  is the same as relatively ample. It remains to check that the restriction of $\omega + \ve \theta$ to the special fibre is ample (see \cite[9.6.4 and 9.6.5]{EGAIV3}). 
The ample cone on the special fiber is the  interior of the nef cone. This proves immediately the claim.  
\end{proof}

The following result generalizes \cite[Proposition 5.2]{BFJ1}. Again, we follow their arguments.

\begin{prop} \label{semipositive vs ample}
Let $\theta$ be a closed $(1,1)$-form with ample de Rham class $\{\theta\} \in N^1(X)$ and let $\Xcal_0$ be any algebraic $\kcirc$-model of $X$. 
Then there is an algebraic $\kcirc$-model $\Xcal$ of $X$ dominating $\Xcal_0$ such that $\theta$ is determined on $\Xcal$ and a  model function $\varphi$ such that $\theta + dd^c\varphi$ is $\Xcal$-positive. If $\theta$ is semipositive and $\ve >0$, then we may find such a model function with $-\ve \leq \varphi \leq 0$.
\end{prop}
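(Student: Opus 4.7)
By the directedness of algebraic $K^\circ$-models of $X$ (see \ref{algebraic models}) and the definition of $\mathcal{Z}^{1,1}(X)$ as a direct limit, I pick an algebraic $K^\circ$-model $\Xcal$ dominating $\Xcal_0$ on which $\theta$ is determined by a class $\theta_\Xcal \in N^1(\Xcal/S)_\R$; for the semipositive case I additionally require $\Xcal$ to dominate a model on which $\theta$ is represented by a nef class, so that $\theta_\Xcal$ itself is nef (pullback of nef being nef). Write $\theta_\Xcal = \sum_{i=1}^{k} a_i\,[\Lcal_i]$ as a finite $\R$-linear combination of line bundle classes on $\Xcal$.

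\textbf{Rationalisation.} The ample cone in $N^1(X)_\R$ is open and $\{\theta\}$ is ample, so I may pick rationals $a_i' \in \Q$ arbitrarily close to the $a_i$ such that $\theta_\Xcal' := \sum_i a_i'\,[\Lcal_i]$ still has ample generic fibre. Clearing a common denominator yields $d\theta_\Xcal' = [\Lcal]$ for a line bundle $\Lcal$ on $\Xcal$ and some $d \in \N_{>0}$; the Nakai--Moishezon criterion on the projective scheme $X$ then makes $L := \Lcal|_X$ into an ample line bundle.

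\textbf{Ample extension and proof of the first assertion.} Applying Proposition \ref{extension of ampleness} to $L$ and $\Xcal$ yields a model $\Xcal''$ dominating $\Xcal$ together with an ample line bundle $\Hcal$ on $\Xcal''$ such that $\Hcal|_X \cong L^{\otimes M}$ for some $M \geq 1$. Pulling $\Lcal$ back to $\Xcal''$, the line bundle $\mathcal{M} := \Hcal \otimes (\Lcal^{\otimes M})^{-1}$ is a model of $\Ocal_X$, so $\psi := -\frac{1}{dM}\log\|1\|_{\mathcal{M}}$ is a model function, and its curvature equals
\[
dd^c \psi \; = \; \tfrac{1}{dM}\,[\mathcal{M}] \; = \; \tfrac{1}{dM}\,[\Hcal] - \theta_\Xcal',
\]
whence $\theta_\Xcal' + dd^c\psi = \tfrac{1}{dM}[\Hcal]$, which is $\Xcal''$-positive. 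Writing $\theta_\Xcal - \theta_\Xcal' = \sum_i (a_i - a_i')[\Lcal_i]$ and iterating Proposition \ref{perturbation of positivity} once per index $i$ shows that
\[
\theta_\Xcal + dd^c \psi \; = \; (\theta_\Xcal' + dd^c\psi) + \sum_{i=1}^{k} (a_i - a_i')\,[\Lcal_i]
\]
is still $\Xcal''$-positive, provided the $a_i'$ were chosen close enough to the $a_i$ at the outset. Setting $\varphi := \psi$ proves the first claim. The main obstacle is precisely this rationalisation: Proposition \ref{extension of ampleness} only extends individual ample integer line bundles, so one must approximate $\theta_\Xcal$ by a $\Q$-class and then use Proposition \ref{perturbation of positivity} to transfer the ampleness back to the original $\theta_\Xcal$.

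\textbf{Semipositive case with bound.} Assume now $\theta$ is semipositive; then $\theta_\Xcal$ is nef by the setup, and so is its pullback to $\Xcal''$. For every $\delta \in \Q \cap (0,1]$,
\[
\theta + dd^c(\delta\varphi) \; = \; (1-\delta)\,\theta + \delta\,(\theta + dd^c\varphi),
\]
which on $\Xcal''$ is a nef class plus a positive multiple of an ample class, hence ample; so $\theta + dd^c(\delta\varphi)$ is $\Xcal''$-positive for every such $\delta$. The function $\varphi$ is bounded on the compact space $\Xan$, and since $-\log|a|$ is a $\Z$-model function with vanishing curvature for every $a \in K^\times$ and $\Q \cdot v(K^\times)$ is dense in $\R$, I may shift $\varphi$ by a suitable constant of this form to arrange $\varphi \leq 0$ without altering $dd^c\varphi$. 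Taking $\delta \in \Q_{>0}$ small enough then yields $-\varepsilon \leq \delta\varphi \leq 0$, with $\delta\varphi$ still a model function, completing the proof.
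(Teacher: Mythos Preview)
Your overall strategy---rationalise $\theta_\Xcal$ to a $\Q$-class $\theta_\Xcal'$, extend the resulting ample $L$ to an ample model bundle via Proposition \ref{extension of ampleness}, then perturb back using Proposition \ref{perturbation of positivity}---is close in spirit to the paper's, but the order of operations introduces a genuine circularity. The model $\Xcal''$, the ample bundle $\Hcal$, and the integers $d, M$ all depend on your choice of rationals $a_i'$; hence so does the threshold in Proposition \ref{perturbation of positivity} governing how small the perturbation $\sum_i (a_i - a_i')[\Lcal_i]$ must be in $N^1(\Xcal''/S)$. Your phrase ``provided the $a_i'$ were chosen close enough at the outset'' presupposes a threshold that is only defined \emph{after} the $a_i'$ are fixed. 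There is no a priori reason why, as $a_i' \to a_i$, the class $\tfrac{1}{dM}[\Hcal]$ does not approach the boundary of the ample cone of the (varying!) space $N^1(\Xcal''/S)$ faster than $|a_i - a_i'| \to 0$; nothing you have written rules this out.

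The paper circumvents this by reversing the order: it first writes $\{\theta\} = \sum_j \mu_j c_1(H_j)$ with $H_j$ ample on $X$ and $\mu_j>0$, extends each $H_j$ to an ample $\Hcal_j$ on a \emph{single fixed} projective model $\Xcal$ (via Lemma \ref{precise extension of ampleness}), and only then performs the rational approximation of both the $\lambda_i$ and the $\mu_j$ inside the fixed finite-dimensional space $N^1(\Xcal/S)$. The ample reference class $\vartheta = \sum_j \mu_j c_1(\Hcal_j)$ and the ambient N\'eron--Severi space are then independent of the approximation, so openness of the ample cone furnishes a well-defined perturbation neighbourhood before any rationals are chosen. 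Your semipositive paragraph (convex combination of nef with ample, shift by a constant in $\Q\cdot v(K^\times)$, then scale by a small rational $\delta$) is correct and coincides with the paper's argument.
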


\begin{proof}
For any algebraic $\kcirc$-model $\Xcal$, the canonical homomorphism $N^1(\Xcal/S) \to N^1(\Xcal_s)$ is injective by definition of numerical equivalence. 
The ample cone of $N^1(\Xcal/S)$ is the preimage of the ample cone of $N^1(\Xcal_s)$ (see the proof of Proposition \ref{perturbation of positivity}).

By assumption, $\theta$ can be represented by a finite sum $\sum_i \lambda_i c_1(\Lcal_i)$ with line bundles $\Lcal_i$ on algebraic $\kcirc$-models $\Xcal_i$ of $X$ and $\lambda_i \in \R$. 
{We set $L_i \coloneqq \Lcal_i|_{X}$.
Hence $\sum_i \lambda_i c_1(L_i)$ represents $\{ \theta\}$.}
Since $\{\theta\}$ is ample, there are finitely many ample line bundles $H_j$ on $X$ 
and $\mu_j>0$ such that $\sum_j \mu_j c_1(H_j)$  {also} represents $\{\theta\}$. 
Clearly, we may assume that every $H_j$ is very ample and hence $H_j$ has a very ample $\kcirc$-model $\Hcal_j$ on a projective algebraic $\kcirc$-model $\Ycal_j$ of $X$. 
Let $\vartheta  \in \mathcal{Z}^{1,1}(X)$ be the class of $\sum_j \mu_j c_1(\Hcal_j)$.
We recall that the isomorphism classes of $\kcirc$-models of $X$ form a directed set and that any $\kcirc$-model of the projective variety $X$ is dominated by a projective $\kcirc$-model. We conclude that there is a projective $\kcirc$-model $\Xcal$ of $X$ such that $\id_X$ extends to morphisms $\Xcal \to \Xcal_i$ and $\Xcal \to \Ycal_j$ for every $i$ and $j$. Replacing $\Lcal_i$ by its pull-back to $\Xcal$, we  may assume that $\Xcal_i = \Xcal$ meaning that $\Lcal_i$ lives on $\Xcal$ for every $i$. 
Moreover, it follows from Lemma \ref{precise extension of ampleness}  that $H_j$ extends to an ample line bundle on $\Xcal$ and hence we may assume that every $\Hcal_j$ lives on $\Xcal$ as well. 
This means that  $\vartheta$ is $\Xcal$-positive.

We now consider the linear equation in the variables 
$(\lambda_i')$, $(\mu'_j)$
\begin{equation}
\label{eq linear system}
\sum_i \lambda_i' c_1(L_i)  - \sum_j \mu_j' c_1(H_j) =0
\end{equation}
which we consider  as an equation  in $\Pic(X) \otimes_\Z \Q \subset \Pic(X) \otimes_\Z \R$.
By assumption, $(\lambda_i)$, $(\mu_j)$ is a real solution of \eqref{eq linear system}.
Hence, by a linear algebra argument, for any $\delta >0$, we can find a solution $(\lambda_i')$, $(\mu'_j)$ of \eqref{eq linear system} 
with $\lambda'_i, \mu_j' \in \Q$ such that 
$|\lambda'_i - \lambda_i| <\delta$ and $|\mu_j' -\mu_j|<\delta$.
We can take $\delta$  small enough to get $\mu_j' >0$. 
We may assume that the $\Q$-line bundles 
$$\Lcal':=\bigotimes_i  \Lcal_i^{\otimes \lambda_i'} \quad {\rm and} \quad \Hcal':=\bigotimes_j  \Hcal_j^{\otimes \mu_j'}$$
agree on the generic fibre $X$. 
Let $\varphi$ be the model function corresponding to $\Hcal' \otimes (\Lcal')^{-1}$. 
As all $\mu_j'$ are positive and all $\Hcal_j$ are ample on $\Xcal$,  $\Hcal'$ is an ample $\Q$-line bundle on $\Xcal$.
By definition we have 
\[ 
\theta + dd^c \varphi = 
\theta + c_1(\Hcal') - c_1(\Lcal')  =
( \theta - c_1(\Lcal') ) + ( c_1(\Hcal') - \vartheta) + \vartheta.
\]
Taking $\delta$ small enough, we  can make the first two terms 
$( \theta - c_1(\Lcal') )$ and $( c_1(\Hcal') - \vartheta)$ as small as we want in $N^1(\Xcal/S)$.
In addition, we know that $\vartheta$ is ample on $\Xcal$.
It follows from our remark at the beginning that the ample cone is open in $N^1(\Xcal/S)$.  For $\delta$ small enough, we deduce that 
$\theta + dd^c \varphi$ is ample on $\Xcal$.

Now let us assume that $\theta$ is semipositive. 
Since a  function in $\mathcal{D}(X)$ is continuous on $\Xan$, it is bounded.
We may replace $\varphi$ by $\varphi - c$ for any sufficiently large $c$ in the value group $\Gamma$ without changing $dd^c\varphi$
and hence we may assume $\varphi \leq 0$. 
Since the sum of a nef and an ample class in $N^1(\Xcal / S)$ remains ample 
(as we can check that on the special fibre, see the remark at the beginning of the proof), we know that 
$$\theta + dd^c(\ve\varphi)= \ve(\theta +dd^c\varphi) + (1-\ve)\theta$$
is also $\Xcal$-positive  for all $0 < \ve \leq 1$. 
Using  a rational $\ve>0$  sufficiently close to $0$, we  get the last claim for the model function $\ve \varphi$.
\end{proof}

\begin{art}
\label{remark Kiehl}
We want to recall a result due to Kiehl \cite[Theorem 2.9]{Kie72}  that we will use later.
Let $\Ycal$ be a  $K^\circ$-scheme. 
Let  $f \colon \Xcal \to \Ycal$ be a proper morphism of finite presentation and let $\Mcal$ be a coherent $\mathcal{O}_\Xcal$-module.
Then $f_*(\Mcal)$ is a coherent $\mathcal{O}_\Ycal$-module.
For some explanation on why Kiehl's result implies this, we refer to Example 3.3 and 3.5 in \cite{Ulr95}. 

We will apply this result in the case of proper flat schemes $\Xcal,\Ycal$  over $\kcirc$. By [Raynaud-Gruson, Corollaire 3.4.7], they are finitely presented over $\kcirc$ and hence any  $f \colon \Xcal \to \Ycal$ is proper and of finite presentation.
\end{art}

\begin{art}
\label{Stein factorization}
We also recall a non-noetherian version of the Stein factorization theorem that will be used later. 
We quote the following result from \cite[Tag 03GY]{stacks-project}.

Let $f : X \to S$ be a universally closed and
quasi-separated morphism of schemes. Then there exists a factorization
$$
\xymatrix{
X \ar[rr]_{f'} \ar[rd]_f & & S' \ar[dl]^{g} \\
& S &
}
$$
with the following properties:
\begin{enumerate}
\item the morphism $f'$ is universally closed, quasi-compact, quasi-separated 
and surjective;
\item the morphism $g : S' \to S$ is integral;
\item  $f'_*\mathcal{O}_X = \mathcal{O}_{S'}$;
\item The relative spectrum of $f_*\mathcal{O}_X$ over $S$ is equal to $S'$;
\end{enumerate}
\end{art}

In the following, we consider an admissible formal scheme $\fX$ over $\kcirc$. 
A {\it  vertical coherent fractional ideal } $\mathfrak{a}$ on $\fX$ is  an $\Ocal_{\fX}$-submodule of $\Ocal_{\fX}\otimes_{K^\circ} K$  with generic fibre $\mathfrak{a}_\eta = \Ocal_{\fX_\eta}$ such that for every formal open affine subset $\fU$, there is a non-zero $\alpha \in \kcirc$ with $\alpha \mathfrak{a}|_\fU$ a coherent ideal sheaf on $\fU$.

\begin{definition}
\label{definition function ideal formal}
Let $\mathfrak{a}$ be a vertical coherent fractional ideal on $\fX$.
We define the function 
\begin{equation}
\begin{array}{cccc}
   \log |\mathfrak{a}| \colon  &\fX_\eta     & \to         &        \R  \\
    &  x    & \mapsto &   \max \{ \log |f(x)| \ \big| \ f\in \mathfrak{a}_{\pi(x)} \}
\end{array}
\end{equation}
where $\pi:\fX_\eta \to \fX_s$ is the reduction map. 
This is a continuous map because $\mathfrak{a}$ is  a coherent sheaf.
\end{definition}

We will now use divisorial points as introduced and studied in Appendix \ref{appendix on divisorial points}.

\begin{lem}
\label{lemma convexity}
Let $\fX$ be an admissible formal scheme over $\kcirc$ and let $f\in \mathcal{O}(\fX_\eta)$.
Let $I$ be the set of divisorial points associated with $\fX$.
Then 
\[ \sup_{x\in \fX_\eta} |f(x)| = \sup_{x\in I} |f(x)|.\]
\end{lem}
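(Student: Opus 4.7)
The plan is the following. The inequality $\sup_{x\in I}|f(x)| \le \sup_{x\in \fX_\eta}|f(x)|$ is immediate from the inclusion $I \subset \fX_\eta$, so only the reverse inequality requires work. I will prove it by reducing to the maximum modulus principle on strictly $K$-affinoid spaces and then identifying the relevant Shilov boundary points with divisorial points of $\fX$.

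Fix $M < \sup_{\fX_\eta} |f|$ and pick $x_0 \in \fX_\eta$ with $|f(x_0)| > M$. First I would work locally: choose a formal affine open subset $\fU = \Spf(A)$ of $\fX$ such that the reduction $\pi(x_0) \in \fX_s$ lies in $\fU_s$; then $x_0 \in \fU_\eta$ and $\fU_\eta = \mathcal{M}(\Acal)$ is a strictly $K$-affinoid space with $\Acal = A\otimes_{\kcirc}K$. By the Berkovich maximum modulus principle, the continuous function $|f|$ on the compact space $\fU_\eta$ attains its supremum on the Shilov boundary, so there is a Shilov point $y \in \fU_\eta$ with $|f(y)| \ge |f(x_0)| > M$.

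Next I would recall that the Shilov points of $\fU_\eta$ are in canonical bijection with the generic points of the irreducible components of $\Spec(\tilde{\Acal})$ via the reduction map; equivalently, after replacing $\fU$ by its admissible blowup making the special fibre reduced (or passing to the canonical formal model of $\fU_\eta$ with reduced special fibre, cf.\ the proof of Proposition \ref{model test}), each Shilov point $y$ lifts the generic point of an irreducible component of the special fibre. By the definition in Appendix \ref{appendix on divisorial points}, such a point is a divisorial point of $\fU$. It remains to exhibit $y$ as a divisorial point of $\fX$: one globalises by taking the closure in $\fX_s$ of the irreducible component of $\fU_s$ whose generic point is $\pi(y)$ and performing an admissible formal blowup of $\fX$ along an appropriate vertical ideal (using \cite[Proposition 8.2.13]{bosch-lectures-2015} to extend modifications of $\fU$ to $\fX$) so that $y$ becomes the lift of a generic point of an irreducible component of the special fibre of the blowup. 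Then $y \in I$ and $|f(y)| > M$, which gives the claim after letting $M \nearrow \sup_{\fX_\eta}|f|$.

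The main obstacle I expect is the last step, namely the careful bookkeeping that ensures a Shilov point of an affinoid subdomain $\fU_\eta$, which is naturally divisorial for $\fU$, is divisorial for the ambient $\fX$ in the precise sense of the appendix. Once one knows that the definition of divisorial points is stable under admissible formal blowup and that any irreducible closed subset of $\fU_s$ can be promoted to an irreducible component of the special fibre of some admissible formal blowup of $\fX$, the argument closes. Everything else (maximum modulus, identification of Shilov points with reductions) is standard.
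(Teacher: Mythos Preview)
Your overall strategy---reduce to a formal affine open and invoke the identification of Shilov boundary points with divisorial points---is exactly what the paper does. But the ``main obstacle'' you flag is a phantom, and your proposed workaround via admissible blowups is both unnecessary and incorrect.

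The point you are missing is elementary: if $\fU$ is a formal open subscheme of $\fX$, then $\fU_s$ is Zariski open in $\fX_s$, so the generic points of the irreducible components of $\fU_s$ are precisely the generic points of those irreducible components of $\fX_s$ that meet $\fU_s$. Hence every divisorial point associated with $\fU$ is already a divisorial point associated with $\fX$, i.e.\ already lies in $I$. There is nothing to globalise and no blowup is needed. Your proposed blowup of $\fX$ would in fact change the model and hence the set $I$, so even if you produced $y$ divisorial for the blowup, that would not place $y$ in the original $I$; the argument as written does not close.

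Similarly, the detour through ``making the special fibre reduced'' in your step~4 is unnecessary (and again changes the model): Proposition~\ref{divisorial points associated to formal affine model} states directly that for \emph{any} formal affine $\kcirc$-model $\fU=\Spf(A)$, the divisorial points associated with $\fU$ coincide with the Shilov boundary of $\fU_\eta$, without any reducedness hypothesis on $\fU_s$. The paper's proof is therefore just: work locally (legitimate by the openness remark above), and apply that proposition.
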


\begin{proof}
Since we can work locally on $\fX$, we can assume that $\fX=\Spf(A)$ is formal affine, in which case the supremum is not $+\infty$. 
It follows from Proposition \ref{divisorial points associated to formal affine model} that the set of divisorial points $I$ is the Shilov boundary of $\fX_\eta$ proving precisely our claim.
\end{proof}

\begin{cor}
\label{corollary convexity}
Let $\fX$ be an admissible formal scheme over $\kcirc$ and let 
$\varphi \coloneqq \log|\mathfrak{a}|$ for  a vertical coherent fractional ideal $\mathfrak{a}$ on $\fX$.
Let $I$ be the set of divisorial points associated with $\fX$.
Then 
\[ \sup_{x\in \fX_\eta} \varphi(x)= \sup_{x\in I} \varphi(x).\]
\end{cor}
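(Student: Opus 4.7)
The inequality $\sup_{y \in I} \varphi(y) \leq \sup_{x \in \fX_\eta} \varphi(x)$ is immediate from $I \subset \fX_\eta$, so the plan is to establish the reverse inequality pointwise: for every $x \in \fX_\eta$, I want to produce a divisorial point $y \in I$ (or a sequence of such) witnessing $\varphi(y) \geq \varphi(x) - \ve$. The key ingredient is Lemma \ref{lemma convexity}, but that lemma concerns $\sup |f|$ for a single $f \in \Ocal(\fX_\eta)$, whereas $\varphi$ is the pointwise max of $\log|f|$ over local generators of $\mathfrak{a}$; reducing one to the other is the content of the argument.

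First I would localize. Fix $x \in \fX_\eta$ and let $\xi := \pi(x) \in \fX_s$. Choose a formal affine open $\fU = \Spf(A)$ of $\fX$ containing $\xi$, so $x \in \fU_\eta$. Since $\mathfrak{a}|_\fU$ is coherent and vertical, it is generated by finitely many elements $f_1,\dots,f_n \in A \otimes_\kcirc K$. By Definition \ref{definition function ideal formal}, for every $y \in \fU_\eta$,
\[
\varphi(y) \;=\; \max_{1 \leq i \leq n} \log|f_i(y)|,
\]
and in particular $\varphi(x) = \log|f_{i_0}(x)|$ for some index $i_0$. Choose $\alpha \in \kcirc \setminus \{0\}$ with $\alpha f_{i_0} \in A$, so that $\alpha f_{i_0} \in \Ocal(\fU_\eta)$. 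Applying Lemma \ref{lemma convexity} to the admissible formal scheme $\fU$ and the function $\alpha f_{i_0}$, and dividing by $|\alpha|$, we obtain
\[
|f_{i_0}(x)| \;\leq\; \sup_{y \in \fU_\eta} |f_{i_0}(y)| \;=\; \sup_{y \in I_\fU} |f_{i_0}(y)|,
\]
where $I_\fU$ denotes the set of divisorial points associated with $\fU$.

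Next I would combine this with the definition of $\varphi$ on divisorial points. For any $y \in I_\fU$, we have $\pi(y) \in \fU_s$, so $f_{i_0} \in \mathfrak{a}_{\pi(y)}$ and hence $\log|f_{i_0}(y)| \leq \varphi(y)$. Using the functoriality of divisorial points with respect to formal open immersions (Appendix \ref{appendix on divisorial points}), we have $I_\fU \subset I$. Putting everything together,
\[
\varphi(x) \;=\; \log|f_{i_0}(x)| \;\leq\; \sup_{y \in I_\fU} \log|f_{i_0}(y)| \;\leq\; \sup_{y \in I_\fU} \varphi(y) \;\leq\; \sup_{y \in I} \varphi(y),
\]
and taking the supremum over $x \in \fX_\eta$ gives the claim (both sides being allowed to equal $+\infty$ if $\fX$ is not quasicompact).

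The only non-routine step is the inclusion $I_\fU \subset I$, i.e.\ that divisorial points of a formal open subset $\fU \subset \fX$ are divisorial points of $\fX$. I expect this to be available in the appendix on divisorial points: admissible formal blow-ups of $\fU$ extend to admissible formal blow-ups of $\fX$ (by blowing up along a coherent ideal whose restriction to $\fU$ gives the blow-up and which is trivial outside $\fU$), and the generic point of an irreducible component of the special fibre of $\fU$ after blow-up corresponds to an analogous generic point after the extended blow-up of $\fX$. Aside from this bookkeeping, the proof is essentially a pointwise reduction to Lemma \ref{lemma convexity}.
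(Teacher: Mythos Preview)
Your argument is correct and follows essentially the same route as the paper: reduce to a formal affine open where $\mathfrak{a}$ is generated by finitely many $f_1,\dots,f_n$, write $\varphi=\max_i\log|f_i|$, and invoke Lemma~\ref{lemma convexity}; the paper phrases this as ``replace $\fX$ by an affine open'' rather than fixing a point $x$, but the content is identical. One minor remark: your justification of $I_\fU\subset I$ via extending admissible blow-ups is more elaborate than needed, since here $I$ means the divisorial points associated with the \emph{given} model $\fX$ (no blow-up), and the inclusion follows at once from the fact that irreducible components of the open subscheme $\fU_s\subset\fX_s$ are exactly the traces of irreducible components of $\fX_s$, so their generic points coincide.
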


\begin{proof}
We can easily replace $\fX$ by one of its open formal subschemes 
because divisorial points are compatible with formal open subsets.
So we can assume that $\fX$ is an admissible  formal affine scheme.
Hence  $\mathfrak{a}$ is generated by finitely many functions $f_1, \ldots, f_n$. 
Then $\varphi(x) = \max_{j=1\ldots n} \log |f_j(x)|$. 
The result follows from Lemma \ref{lemma convexity}.
\end{proof}

Let us return to our algebraic setting with an algebraic $\kcirc$-model $\Xcal$ of the proper scheme $X$.  
We will apply the above to the formal completion $\fX'$ of the algebraic $\kcirc$-model $\Xcal'$ from the following result.

\begin{lem}
\label{lemma limit psh function}
Let $\theta$ be a closed $(1,1)$-form determined on $\Xcal$ and let $\varphi$ be a $\theta$-psh model function on $X$.
Then there is an algebraic $\kcirc$-model $\Xcal'$ of $X$ with a finite morphism $\Xcal' \to \Xcal$ extending $\id_X$ and  a sequence $\mathfrak{a}_m$ of vertical coherent fractional ideals on $\Xcal'$ such that 
$\frac{1}{m} \log | \mathfrak{a}_m|$ converges uniformly to $\varphi$.
\end{lem}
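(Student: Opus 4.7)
The plan is to adapt the Goodman-type construction from \cite[\S 5]{BFJ1} to the present non-noetherian setting, relying on Kiehl's coherence theorem (\ref{remark Kiehl}), the ample-perturbation result Proposition \ref{semipositive vs ample}, the non-noetherian Stein factorization (\ref{Stein factorization}), and the reduction of sup-norm estimates to divisorial points via Corollary \ref{corollary convexity}. After multiplying $\theta$ and $\varphi$ by a sufficiently divisible positive integer, we may assume that $\theta = c_1(\Lcal)$ for a line bundle $\Lcal$ on $\Xcal$ and that $\varphi = \varphi_D$ for a vertical Cartier divisor $D$ on some proper algebraic $\kcirc$-model $\tilde\Xcal$ dominating $\Xcal$ via a proper birational morphism $\pi$. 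The $\theta$-psh hypothesis then becomes that $\Mcal := \pi^{*}\Lcal \otimes \Ocal_{\tilde\Xcal}(D)$ is nef on $\tilde\Xcal$. Applying Stein factorization (\ref{Stein factorization}) to $\pi$ produces $\tilde\Xcal \xrightarrow{\pi'} \Xcal' \xrightarrow{\nu} \Xcal$ with $\nu$ finite and $\pi'_{*}\Ocal_{\tilde\Xcal} = \Ocal_{\Xcal'}$, giving the required finite cover $\Xcal'$ of $\Xcal$. We also fix an ample line bundle $\Hcal$ on $\Xcal$ via Proposition \ref{extension of ampleness}.

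For each $m \geq 1$, we would apply Proposition \ref{semipositive vs ample} to the closed $(1,1)$-form $m(\theta + dd^{c}\varphi) + c_1(\Hcal)$, whose de Rham class $m\{\theta\} + \{\Hcal\}$ is nef plus ample, hence ample on $X$. This yields a model function $\psi_m$ with $|\psi_m| \leq 1/m$ and an ample $\Q$-line bundle $\Ncal_m$ on some model $\Xcal_m$ dominating $\tilde\Xcal$, representing the class $m(\theta + dd^{c}\varphi) + c_1(\Hcal) + dd^{c}\psi_m$. By Kiehl's theorem (\ref{remark Kiehl}) the $\kcirc$-module $H^{0}(\Xcal_m, \Ncal_m^{\otimes k})$ is finitely generated, and by ampleness it generates $\Ncal_m^{\otimes k}$ for $k$ sufficiently large. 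The base ideal of this linear series, pushed forward by $\pi'$ and twisted by a nonzero section of a suitable power of $\Hcal$ to normalize the generic fibre to $\Ocal_{X}$, defines the desired vertical coherent fractional ideal $\mathfrak{a}_m$ on $\Xcal'$.

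The convergence $\tfrac{1}{m}\log|\mathfrak{a}_m| \to \varphi$ then reduces to two estimates at divisorial points (introduced in the appendix). The upper bound $\log|\mathfrak{a}_m|(x) \leq m\varphi(x) + O(1)$ follows from $v_x(s) \geq -m\,v_x(D)$ for any section $s$ of $\Ocal(mD)$, together with the $O(1)$ correction from the $\Hcal$-twist. The harder lower bound $\log|\mathfrak{a}_m|(x) \geq m\varphi(x) - O(1)$, uniform in $x$ and $m$, follows from the ampleness of $\Ncal_m$ by Seshadri-type estimates, which at any prescribed divisorial point produce a section $s$ with $\log|s(x)|$ bounded below by a uniform constant. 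Since the difference $m\varphi - \log|\mathfrak{a}_m|$ can itself be written as $\log|\Jcal_m|$ for a vertical coherent fractional ideal $\Jcal_m$ on $\tilde\Xcal$ (built from $\Ocal_{\tilde\Xcal}(-mD)$ and the pullback of $\mathfrak{a}_m$), Corollary \ref{corollary convexity} upgrades the divisorial-point estimates to uniform sup-norm control, giving $\sup_{\Xan}|\varphi - \tfrac{1}{m}\log|\mathfrak{a}_m|| = O(1/m)$.

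The main obstacle is precisely this lower-bound step: in the noetherian residue-characteristic-zero setting of BFJ, it comes from Kodaira vanishing applied to multiplier ideals, neither of which is available here. We replace these tools by Proposition \ref{semipositive vs ample}, which trades the nef class $\theta + dd^{c}\varphi$ for an ample $\Q$-perturbation at the cost of a small corrector $\psi_m$; this, combined with Kiehl's coherence and Stein factorization, delivers both the finite cover $\Xcal' \to \Xcal$ and the desired asymptotic sections needed for the estimates above.
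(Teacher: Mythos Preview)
Your proposal has a genuine gap: you invoke Proposition \ref{extension of ampleness} to produce an ample line bundle $\Hcal$ on $\Xcal$, and then Proposition \ref{semipositive vs ample} to perturb $m(\theta+dd^c\varphi)+c_1(\Hcal)$ to something $\Xcal_m$-positive. Both propositions require $X$ to be projective and carry an ample line bundle. The lemma is stated for an arbitrary proper scheme $X$ over $K$ (see the standing hypotheses at the start of Section \ref{section psh model functions}), so this reduction is illegitimate. Beyond this, even granting projectivity, your ``Seshadri-type estimates'' for the lower bound are left as a slogan rather than an argument, and the models $\Xcal_m$ produced by Proposition \ref{semipositive vs ample} vary with $m$ and need not be compatible with the single Stein factorization $\tilde\Xcal\to\Xcal'$ you fixed at the outset.

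The paper's proof avoids all of this by working \emph{relatively} over $\Xcal$ rather than absolutely. First one arranges (via Raynaud's theorem and formal GAGA) that the dominating model $\Ycal\to\Xcal$ is a blow-up in a vertical ideal $\mathfrak b$; this is the point you miss, and it buys you a $\pi$-ample effective vertical divisor $H$ with $\mathfrak b\,\Ocal_\Ycal=\Ocal_\Ycal(H)$. The $\theta$-psh condition then shows $D$ is $\pi$-\emph{nef} (curves contracted by $\pi$ pair trivially with $\pi^*\theta$), so $kD+H$ is $\pi$-ample for every $k\geq 1$ by Kleiman. After Stein-factoring $\pi=g\circ\pi'$ with $g:\Xcal'\to\Xcal$ finite and $\pi'_*\Ocal_\Ycal=\Ocal_{\Xcal'}$, one simply sets $\mathfrak a:=\pi'_*\Ocal_\Ycal(n(kD+H)+lD)$. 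Relative ampleness gives surjectivity of $\pi'^{*}\mathfrak a\to\Ocal_\Ycal(n(kD+H)+lD)$ for $n\gg 0$, hence the \emph{exact} identity $\tfrac{1}{m}\log|\mathfrak a|=\varphi_D+\tfrac{n}{m}\varphi_H$ with $m=nk+l$. Uniform convergence is then immediate from $0\leq \tfrac{n}{m}\varphi_H\leq \tfrac{1}{k}\varphi_H$; no divisorial-point estimates, no Corollary \ref{corollary convexity}, and no global ample class are needed.
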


\begin{proof}
The proof of \cite[5.7]{BFJ1} can be adapted in our non-noetherian context. 
For the convenience of the reader we detail this.

\emph{Step 1.} We have seen in \ref{lemma model function Cartier} that there is an algebraic $\kcirc$-model $\Ycal$ and a vertical  Cartier divisor $D$ on $\Ycal$ with $\varphi = \varphi_D$. 
By \ref{algebraic models}, we may assume that the identity $\id_X$ extends to a morphism $\pi:\Ycal \to \Xcal$. 
It follows from Raynaud's theorem that there is an admissible formal blowing up $\psi:\fY \to \hat{\Xcal}$ of the formal completion $\hat{\Xcal}$ in an open coherent ideal ${\mathfrak b}'$ such that $\fY$ dominates the formal completion of $\Ycal$. 
By the formal GAGA-principle for proper schemes over $\kcirc$ proved by Fujiwara--Kato \cite[Theorem I.10.1.2]{fujiwara-kato-1}, the coherent ideal $\mathfrak b'$ is the formal completion of a coherent vertical ideal $\mathfrak b$ on $\Xcal$ and hence $\psi$ is the formal completion of the blowing up of $\Xcal$ in $\mathfrak b$. Hence we may assume that $\pi$ is precisely this algebraic blow up morphism.

\emph{Step 2.} 
Note that $\pi$ is a proper morphism and hence $\pi_*(\Ocal_\Ycal)$ is a coherent sheaf by \ref{remark Kiehl}. Let $\pi= g \circ \pi'$ be the Stein factorization of $\pi$ as in \ref{Stein factorization}. It follows from coherence of $\pi_*(\Ocal_\Ycal)$ and from \ref{Stein factorization}(4) that the morphism $g:\Xcal' \to \Xcal$ is finite. By construction, $\Xcal'$ is a model of $X$ and $\pi',g$ restrict to the identity on $X$.

\emph{Step 3.} 
Let $C \subset \Ycal_s$ be a curve which is contracted by $\pi$ i.e. such that  $\pi (C) = \{x\}$ for some closed point $x\in \Xcal_s$.
Since $\varphi$ is $\theta$-psh, by definition we get that 
$(D + \pi^*(\theta))\cdot C \geq 0$.
On the other hand, by the projection formula, 
$\pi_*(\pi^*(\theta) . C )= \theta . \pi_*(C)=0$.
Hence $\pi^*(\theta)\cdot C=0$ and so 
 $D\cdot C \geq 0$. 
By definition, this means that $D$ is $\pi$-nef.

\emph{Step 4.}
By the construction in Step 1, there is a vertical ideal sheaf $\mathfrak{b}$ on $\Xcal$ such that $\pi$ is the blow up of $\Xcal$ along $\mathfrak{b}$.
By the universal property of the blow up, $\mathfrak{b} \mathcal{O}_{\Ycal} = \mathcal{O}_{\Ycal}(H)$ for 
an effective $\pi$-ample vertical Cartier divisor $H$ on $\Ycal$.

\emph{Step 5.} 
We choose a non-zero $k \in \N$. Let $x\in \Xcal_s$ be a closed point. 
{We denote by $\Ycal_x$ the fiber over $x$ with respect to the morphism $\Ycal_s \to \Xcal_s$. Note that $\Ycal_x$ is a proper scheme over the residue field of $x$.} 
Since  $H$ is $\pi$-ample, $\mathcal{O}(H)_{|\Ycal_x}$ is ample.
Similarly, since $D$ is $\pi$-nef, $\mathcal{O}(D)_{|\Ycal_x}$ is nef.
It follows from Kleiman's criterion that
$\mathcal{O}(kD+H)_{|\Ycal_x}$ is ample.
Hence by \cite[Corollaire 9.6.5]{EGAIV3}, $kD+H$ is $\pi$-ample  
and hence $kD+H$ is also $\pi'$-ample.

\emph{Step 6.}
By \ref{Stein factorization}(3), we have $\pi'_* \mathcal{O}_{\KY} = \mathcal{O}_{\KX'}$.
It follows that $\pi'_*$ maps vertical coherent fractional ideals on $\Ycal$ to vertical coherent fractional ideals on $\Xcal'$.
 It follows   that 
$\mathfrak{a} \coloneqq \pi'_* \mathcal{O}_{\Ycal}(n(kD+H)+lD)$ is a  vertical coherent fractional ideal on $\Xcal'$ for every $n \in \N$ and $l=0,\dots,k-1$.

\emph{Step 7.} 
Hence by the characterization given in \cite[Proposition 4.6.8]{EGAII}  of $\pi'$-ampleness,  for all sufficiently large $n\in \N$, the map
$\pi'^* \pi'_* \mathcal{O}_{\Ycal}(n({k}D+H)+lD) \to \mathcal{O}_{\Ycal}(n({k}D+H)+lD)$ is surjective which means that $\pi'^* \mathfrak{a} \to \Ocal_{\Ycal}(n(kD+H)+lD)$ is surjective.
This implies that 
$\log |\mathfrak{a}| = \varphi_{n(kD+H)+lD}$ and hence
$$\frac{1}{m}\log |\mathfrak{a}| = \varphi_D + \frac{n}{m}\varphi_H$$
for $m \coloneqq nk + l$. We have $0 \leq \frac{n}{m}\varphi_H \leq \frac{1}{k} \varphi_H$ and this is arbitrarily small for sufficiently large $k$ independently of the choice of $n$ and $l$. This leads easily to the construction of an approximating sequence as in the claim.
\end{proof}

\begin{rem} \label{approximation and normal}
If $\Xcal$ is normal, then we have $\Xcal=\Xcal'$ in Lemma \ref{lemma limit psh function}.
\end{rem}

\begin{prop} \label{theta-psh and skeleton}
Let $\Xcal$ be an algebraic $\kcirc$-model of the proper scheme $X$ over $K$. 
Let $I$ be the set of divisorial points of $\Xan$ associated with $\Xcal$.
Let $\theta$ be a  closed $(1,1)$-form which is determined on $\Xcal$ and let $\varphi$ 
be a $\theta$-psh model function on X. 
Then 
\[ \sup_{x\in \Xan} \varphi(x) = \sup_{x\in I} \varphi(x).\]
\end{prop}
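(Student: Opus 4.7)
The plan is to reduce the statement, via uniform approximation, to the analogue for model functions of the shape $\tfrac{1}{m}\log|\mathfrak{a}|$, where Corollary \ref{corollary convexity} already gives the desired maximum principle on a formal completion.

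First, I would invoke Lemma \ref{lemma limit psh function} to produce an algebraic $\kcirc$-model $\Xcal'$ of $X$, a finite morphism $\pi \colon \Xcal' \to \Xcal$ extending $\id_X$, and a sequence $(\mathfrak{a}_m)_{m \in \N}$ of vertical coherent fractional ideals on $\Xcal'$ such that the model functions $\varphi_m \coloneqq \tfrac{1}{m}\log|\mathfrak{a}_m|$ converge uniformly to $\varphi$ on $\Xan$.

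Next, I would compare the divisorial points of the two models. Let $I'$ denote the set of divisorial points of $\Xan$ associated with $\Xcal'$. Since $\pi$ is finite and restricts to the identity on the generic fibre, the induced map $\pi_s \colon \Xcal'_s \to \Xcal_s$ is finite and surjective on special fibres, and by a short dimension argument based on flatness of the models over $\kcirc$ it sends generic points of irreducible components of $\Xcal'_s$ to generic points of irreducible components of $\Xcal_s$. Since the reduction maps on $\Xan$ satisfy $\mathrm{red}_\Xcal = \pi_s \circ \mathrm{red}_{\Xcal'}$, this yields $I' \subset I$.

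Then, applying Corollary \ref{corollary convexity} to the formal completion $\hat{\Xcal'}$ and to the vertical coherent fractional ideal induced by $\mathfrak{a}_m$, I would obtain
\[
\sup_{x \in \Xan} \varphi_m(x) \;=\; \sup_{x \in I'} \varphi_m(x) \;\leq\; \sup_{x \in I} \varphi_m(x) \;\leq\; \sup_{x \in \Xan} \varphi_m(x),
\]
forcing equality throughout. Passing to the limit as $m \to \infty$ and using uniform convergence $\varphi_m \to \varphi$ then gives
\[
\sup_{\Xan} \varphi \;=\; \lim_{m \to \infty} \sup_{\Xan} \varphi_m \;=\; \lim_{m \to \infty} \sup_{I} \varphi_m \;=\; \sup_{I} \varphi,
\]
as required. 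I expect the main obstacle to be the inclusion $I' \subset I$ from the second step, since it requires controlling the behaviour of divisorial points under a finite refinement of algebraic models; this rests on the characterization of divisorial points developed in Appendix \ref{appendix on divisorial points} combined with the equidimensionality of the special fibres coming from flatness of $\Xcal$ and $\Xcal'$ over $\kcirc$.
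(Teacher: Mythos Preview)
Your proof is correct and follows essentially the same approach as the paper: invoke Lemma \ref{lemma limit psh function}, compare the divisorial points of $\Xcal'$ and $\Xcal$, apply Corollary \ref{corollary convexity} on the formal completion, and pass to the uniform limit. The paper actually asserts the stronger equality $I'=I$ (the reverse inclusion follows since $\pi_s$ is finite surjective and both special fibres are equidimensional by flatness, so the fibre of $\pi_s$ over a generic point of $\Xcal_s$ consists only of generic points of $\Xcal'_s$), which makes your sandwich argument unnecessary, but your weaker inclusion $I' \subset I$ is entirely sufficient.
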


\begin{proof}
Let $\Xcal'$ be the model of $X$ from Lemma \ref{lemma limit psh function}. 
Since the constructed morphism $\Xcal'\to \Xcal$ is finite, the set of divisorial points of $\Xan$ associated with $\Xcal'$ agrees with $I$.
Then the claim follows from Lemma \ref{lemma limit psh function} and from Corollary \ref{corollary convexity} applied to the formal completion $\fX'$ of $\Xcal'$.
\end{proof}

\section{Semipositivity and pointwise convergence}
\label{section semipositivity and pointwise convergence}

Our goal is to generalize \cite[Theorem 5.11]{BFJ1}  to a line bundle $L$ 
on a proper {scheme} $X$ over any non-archimedean field $K$.
This is a generalization in various aspects as in \cite{BFJ1}, $X$ was assumed to be a smooth projective variety and the valuation was discrete with residue characteristic zero (due to a use of the theory of multiplier ideals). 

In terms of metrics, the main result means that pointwise convergence of semipositive  model metrics on $L^\an$ to a model metric  implies that the limit is a semipositive model metric. 
By Chow's lemma, we will reduce to the case of projective varieties. 

We will first prove an analogue of \cite[Lemma 5.12]{BFJ1}.
Recall that we denote by $X^{\rm div}$ the set of divisorial points 
of the analytification $\Xan$ (see Appendix \ref{appendix on divisorial points}).

\begin{prop} \label{Lemma 5.12}
Let $X$ be a projective scheme over $K$ with an ample line bundle $L$. 
We consider an algebraic $\kcirc$-model $\Xcal$ of $X$ and a line bundle $\Lcal$ on $\Xcal$ extending $L$. 
Let $\metr=\metr_\Lcal$ be the corresponding model metric on $L^\an$ which is assumed to be the pointwise limit over $X^{\rm div}$ of semipositive model metrics on $L^\an$.  
Then $\metr$ is a semipositive model metric.
\end{prop}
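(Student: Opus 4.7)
The plan is to recast the claim in the language of $\theta$-plurisubharmonic model functions. Set $\theta := c_1(L,\metr)$, a closed $(1,1)$-form determined on $\Xcal$ whose de Rham class $\{\theta\} = c_1(L) \in N^1(X)$ is ample. Writing each approximant as $\metr_m = e^{-\varphi_m}\metr$ gives, by \ref{semipositive metric vs psh}, a sequence $(\varphi_m)$ of $\theta$-psh model functions with $\varphi_m(x) \to 0$ for every $x \in X^{\div}$. Combining Remark \ref{algebaic vs formal metric} with Proposition \ref{model test} applied to the formal completion of $\Xcal$, the claim that $\metr_\Lcal$ is a semipositive model metric amounts to showing $\deg_\Lcal(C) \ge 0$ for every closed curve $C \subset \Xcal_s$.

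The first step is to establish the upper bound $\sup_{\Xan}\varphi_m \to 0$. Since $\theta$ is determined on $\Xcal$ and $\varphi_m$ is $\theta$-psh, Proposition \ref{theta-psh and skeleton} yields
\[
\sup_{\Xan}\varphi_m \;=\; \sup_{x \in I}\varphi_m(x),
\]
where $I \subset X^{\div}$ is the finite set of divisorial points of $\Xan$ associated with $\Xcal$ (one per irreducible component of $\Xcal_s$). Pointwise convergence of $\varphi_m \to 0$ on the finite set $I$ is uniform, so $\sup_{\Xan}\varphi_m \to 0$.

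Next, fix $C \subset \Xcal_s$. For each $m$, after passing to a dominating algebraic model that we may assume to be a blowup not centered along $C$, we obtain a morphism $\pi_m : \Xcal_m \to \Xcal$ on which $\varphi_m$ is represented by a vertical Cartier divisor $D_m = \sum_j a_{m,j}E_{m,j}$ and on which $\pi_m^*\Lcal + D_m$ is nef. For every irreducible component $E' \subset \Xcal_s$, the multiplicity of $D_m$ along the strict transform $\tilde E'$ in $\Xcal_m$ is, up to the valuation normalization, proportional to $-\varphi_m(x_{E'})$; since $\Xcal_s$ has only finitely many components and $\varphi_m(x_{E'}) \to 0$ on each, these multiplicities go to $0$ uniformly in $E'$. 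Intersecting the nef class $\pi_m^*\Lcal + D_m$ with the strict transform $\tilde C_m$ of $C$ and using the projection formula $\pi_m^*\Lcal \cdot \tilde C_m = \Lcal \cdot (\pi_m)_*[\tilde C_m] = \deg_\Lcal(C)$, one obtains
\[
0 \;\le\; (\pi_m^*\Lcal + D_m) \cdot \tilde C_m \;=\; \deg_\Lcal(C) + D_m \cdot \tilde C_m.
\]

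The main obstacle is to prove that $D_m \cdot \tilde C_m \to 0$ as $m \to \infty$. This intersection decomposes into strict-transform contributions (which are small because the relevant $|a_{m,\tilde E'}|$ are uniformly small) and contributions from $\pi_m$-exceptional components whose coefficients in $D_m$ correspond to divisorial points of $X^{\div}$ that depend on $m$, so pointwise convergence alone does not yield uniform control. To overcome this, I would invoke Proposition \ref{semipositive vs ample}: for every $\varepsilon > 0$ it produces a dominating model $\Xcal_\varepsilon$ and a model function $\psi_\varepsilon$ with $-\varepsilon \le \psi_\varepsilon \le 0$ such that $\vartheta_\varepsilon := \theta + dd^c\psi_\varepsilon$ is $\Xcal_\varepsilon$-positive. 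The auxiliary function $\varphi_m - \psi_\varepsilon$ is $\vartheta_\varepsilon$-psh, so Proposition \ref{theta-psh and skeleton} applied on $\Xcal_\varepsilon$ furnishes uniform control over $\varphi_m$ on the finite set of divisorial points associated with $\Xcal_\varepsilon$. Passing to a model simultaneously dominating $\Xcal_\varepsilon$ and $\Xcal_m$ transfers this uniform control to the exceptional contribution in $D_m \cdot \tilde C_m$ up to an error of size $O(\varepsilon)$. Letting first $m \to \infty$ and then $\varepsilon \to 0$ forces $D_m \cdot \tilde C_m \to 0$ and hence $\deg_\Lcal(C) \ge 0$, completing the argument.
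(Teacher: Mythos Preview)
Your setup and first step are fine: translating to $\theta$-psh model functions and invoking Proposition \ref{theta-psh and skeleton} to get $\sup_{\Xan}\varphi_m\to 0$ is correct. The gap is in the second step, in the control of $D_m\cdot\tilde C_m$. Two independent obstructions make this impossible along the lines you sketch. First, the maximum principle you use only ever yields \emph{upper} bounds on $\varphi_m$; but the coefficients of $D_m$ along the $\pi_m$-exceptional components are values of $-\varphi_m$ at divisorial points that depend on $m$, and for these you need a \emph{lower} bound on $\varphi_m$. Your fix via $\psi_\varepsilon$ and Proposition \ref{semipositive vs ample} is again a maximum-principle statement on the fixed finite set $I_\varepsilon$ and says nothing about $\varphi_m$ at the divisorial points of $\Xcal_m$ (or of any common refinement). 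Second, even if you somehow knew $|a_{m,j}|\le\varepsilon$ for all coefficients, the intersection $D_m\cdot\tilde C_m=\sum_j a_{m,j}(E_{m,j}\cdot\tilde C_m)$ involves the quantities $E_{m,j}\cdot\tilde C_m$, and there is no uniform bound on $\sum_j|E_{m,j}\cdot\tilde C_m|$ as the models $\Xcal_m$ become more and more complicated with $m$. (Your side remark that $\pi_m$ may be taken to be a blowup ``not centered along $C$'' is also unjustified.)

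The paper circumvents exactly this by reversing the order of operations: one first fixes $C$ and blows up $\Xcal$ along $C$ to obtain a \emph{fixed} model $\Xcal'$ with exceptional divisor $E$; the finitely many intersection numbers $c_1(\Hcal')^{n-1}\cdot E\cdot V$ (with $V$ the components of $\Xcal'_s$) are then constants independent of $m$. Ampleness of $L$ is used substantively (not just through Proposition \ref{semipositive vs ample}) to show, via base ideals $\mathfrak b_m$ of $\Lcal^{\otimes m}$, that the multiplicities $\mu_W$ appearing after pulling back to suitable $\Xcal_m$ are all $\le m\varepsilon$ on the divisorial points of the fixed $\Xcal'$ for $m$ large. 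The projection formula then brings everything down to the fixed intersection numbers on $\Xcal'$, so the error term is genuinely $O(\varepsilon)$. Your approach misses this key device of testing against a model that is chosen once $C$ is fixed, and of using global generation of $\Lcal^{\otimes m}(-D_m)$ (coming from ampleness) rather than merely nefness of $\pi_m^*\Lcal+D_m$.
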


By  Lemma \ref{lemma semipositive base change} and Lemma \ref{irreducible components}, it is enough to check the claim for a projective variety over an algebraically closed field $K$. 
Here we have used  that $(X \otimes \C_K)^{\rm div}$ is the preimage of $X^{\rm div}$ with respect to base change morphism $(X \otimes \C_K)^{\rm an} \to \Xan$ 
(see Proposition \ref{divisorial points and base change}), and that 
$X^{\rm div}= \bigcup (X_i)^{\rm div}$
where $X_i$ ranges over the irreducible components of $X$ (see Proposition \ref{divisorial points and irreducible components})
Then Proposition \ref{Lemma 5.12} follows immediately from Lemma \ref{Lemma 5.12-first step} and Lemma \ref{Lemma 5.12-second step} below. 

Recall that the base-ideal $\mathfrak{a}_m$ of $\Lcal^{\otimes m}$
is defined as the image of the canonical map
$$H^0(\Xcal, \Lcal^{\otimes m}) \otimes \Lcal^{\otimes (-m)} \rightarrow \Ocal_\Xcal.$$
Since $L$ is ample on $X$, $\mathfrak{a}_m$ is a vertical coherent ideal sheaf for $m$ sufficiently large. 

We give now the analogue of Definition \ref{definition function ideal formal} in the algebraic setting:

\begin{definition}
\label{definition function ideal}
Let $\mathfrak{a}$ be a coherent fractional ideal 
on the algebraic  $\kcirc$-model $\Xcal$ of the proper scheme $X$ over $K$.
Then we set
$$ \log |\mathfrak{a}|(x) \coloneqq \max \{ \log |f(x)| \ \big| \ f\in \mathfrak{a}_{\pi(x)} \} \in [-\infty, \infty[$$
where $\pi:\Xan \to \Xcal_s$ is the reduction map. 
\end{definition}

\begin{lem}
\label{Lemma 5.12-first step}
We keep the same hypotheses as in Proposition \ref{Lemma 5.12}. We assume additionally that $K$ is algebraically closed and that $X$ is a variety. We fix a finite subset $S$ of $X^{\rm div}$.    
Then there is a sequence of algebraic $\kcirc$-models $\Zcal_m$ such that $\id_X$ extends to finite morphisms $g_m:\Zcal_m \to \Xcal$ with the following property: Let $\mathfrak b_m$ be the base-ideal of $g_m^*(\Lcal)^{\otimes m}$. For $m$ sufficiently large, $\mathfrak b_m$ is a vertical coherent ideal on $\Zcal_m$ and $\frac{1}{m}{\log |\mathfrak b_m|}$ converges pointwise to $0$ on $S$.
\end{lem}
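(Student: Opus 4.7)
The plan is to adapt the approach of Lemma \ref{lemma limit psh function}, using the semipositive approximation at the finite set $S$ as a substitute for the multiplier-ideal method of Boucksom--Favre--Jonsson.

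\textbf{Construction of $\Zcal_m$.} By Lemma \ref{lemma semipositive base change} and Lemma \ref{irreducible components} we reduce to $K$ algebraically closed and $X$ a projective variety. For each $m$, the finiteness of $S$ combined with the pointwise-convergence hypothesis on $X^{\rm div}$ yields a semipositive model metric $\metr_m'$ with $|\varphi_m(x)| < 1/m$ for all $x \in S$, where $\varphi_m := \log(\metr_\Lcal/\metr_m')$ is $\theta$-psh for $\theta = c_1(\Lcal)$. After replacing $\Lcal$ by a fixed positive power to clear denominators, we may write $\metr_m' = \metr_{\Lcal_m'}$ with $\Lcal_m'$ nef on a model $\Xcal_m'$. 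Dominating $\Xcal$ and $\Xcal_m'$ by a common model $\Ycal_m$ via morphisms $p_m$ and $q_m$ extending $\id_X$, we write $q_m^*\Lcal_m' = p_m^*\Lcal \otimes \Ocal(D_m)$ for a vertical Cartier divisor $D_m$, so that $\varphi_{D_m} = \varphi_m$ and $q_m^*\Lcal_m'$ is nef on $\Ycal_m$. Apply Stein factorization \ref{Stein factorization} to $p_m$ (coherence of $p_{m,*}\Ocal_{\Ycal_m}$ being ensured by Kiehl's theorem \ref{remark Kiehl}) to obtain $\Ycal_m \xrightarrow{p'_m} \Zcal_m \xrightarrow{g_m} \Xcal$ with $g_m$ finite and $p_{m,*}'\Ocal_{\Ycal_m} = \Ocal_{\Zcal_m}$; this defines our finite cover. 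The projection formula gives $H^0(\Zcal_m, g_m^*\Lcal^{\otimes k}) = H^0(\Ycal_m, p_m^*\Lcal^{\otimes k})$ for every $k$, and ampleness of $L$ on the generic fibre forces $\mathfrak{b}_m$ to be vertical for $m$ sufficiently large.

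\textbf{Producing sections and estimates.} Following steps 4--7 of the proof of Lemma \ref{lemma limit psh function}, construct an effective $p_m$-ample vertical Cartier divisor $H_m$ on $\Ycal_m$ (via an admissible formal blow-up descended by formal GAGA of Fujiwara--Kato). Set $\Lcal_n := q_m^*\Lcal_m'$, so that $\Lcal_n = p_m^*\Lcal + D_m$ is nef. By Kleiman's criterion, $k\Lcal_n + H_m$ is $p_m$-ample for every $k \geq 1$, hence by relative global generation (EGA II 4.6.8) the line bundle $\Lcal_n^{\otimes nk}(nH_m) = p_m^*\Lcal^{\otimes nk} \otimes \Ocal(nkD_m + nH_m)$ is $p_m$-globally generated for $n$ large. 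Push-forward through $p_m'$ together with the projection formula yields
\[
p'_{m,*}\bigl(\Lcal_n^{\otimes nk}(nH_m)\bigr) \,=\, g_m^*\Lcal^{\otimes nk} \otimes \mathfrak{a}_{m,k,n},
\]
where $\mathfrak{a}_{m,k,n} := p'_{m,*}\Ocal_{\Ycal_m}(nkD_m + nH_m)$ is the coherent fractional ideal of Lemma \ref{lemma limit psh function} with $\log|\mathfrak{a}_{m,k,n}|(x) = nk\,\varphi_{D_m}(x) + n\,\varphi_{H_m}(x)$ at divisorial points $x$. Setting the running index to be $M = nk$, these sections yield the lower bound $\tfrac{1}{M}\log|\mathfrak{b}_M|(x) \geq \varphi_{D_m}(x) + \tfrac{1}{k}\varphi_{H_m}(x)$ on $S$; combined with the trivial upper bound $\tfrac{1}{M}\log|\mathfrak{b}_M|(x) \leq 0$ coming from verticality, a diagonal choice $k = k(m) \to \infty$ with $|\varphi_{D_m}(x)| < 1/m$ on $S$ gives the desired convergence.

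\textbf{Main obstacle.} The principal technical difficulty is ensuring $\mathfrak{a}_{m,k,n} \subset \Ocal_{\Zcal_m}$, so that the push-forward sections genuinely contribute to the base-ideal $\mathfrak{b}_M$ of $g_m^*\Lcal^{\otimes M}$. The effective and anti-effective parts of $D_m$ are not controlled uniformly in $m$; one may need to pre-twist $\Lcal$ by a vertical Cartier divisor pulled back from $\Xcal$ — harmless on $S$ since $\varphi_{D_m}$ is already small there — before running the argument. A related subtlety is keeping the construction integral rather than $\Q$-valued, which requires care when the semipositive approximation $\metr_m'$ is only a $\Q$-power of an algebraic nef metric.
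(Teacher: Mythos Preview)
Your overall architecture (approximate by a $\theta$-psh model function, pass to a dominating model, Stein-factor to get the finite $\Zcal_m$, and then bound the base-ideal from below) is the same as in the paper, and you correctly isolate the one point where the argument is incomplete. But the gap you flag in your ``Main obstacle'' paragraph is real and is not repaired by a pre-twist; as written, the proof does not go through.

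The difficulty is that relative ampleness of $k\Lcal_n+H_m$ over $\Xcal$ only yields surjectivity of $p_m^*p_{m,*}\Fcal\to\Fcal$; it does not manufacture global sections of $g_m^*\Lcal^{\otimes nk}$. For your push-forward $\mathfrak a_{m,k,n}=p'_{m,*}\Ocal_{\Ycal_m}(nkD_m+nH_m)$ to sit inside $\Ocal_{\Zcal_m}$ you need $-(nkD_m+nH_m)$ effective, and since $H_m$ is effective this forces $\varphi_{D_m}\le 0$ \emph{everywhere on $\Xan$}. You only control $\varphi_{D_m}$ on the finite set $S$, so there is no reason for this global negativity, and a constant pre-twist by $c$ would shift the values on $S$ by the same $c$, destroying the lower bound unless you already know $\sup_{\Xan}\varphi_{D_m}$ is small.

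The paper supplies exactly the two missing ingredients. First, enlarge the control set to include the finitely many divisorial points associated with $\Xcal$ (this is free from the hypothesis) and then invoke the maximum principle of Proposition~\ref{theta-psh and skeleton}: since $\varphi_{D_m}$ is $\theta$-psh and $\theta$ is determined on $\Xcal$, its supremum on $\Xan$ is attained on those divisorial points, hence $\sup_{\Xan}\varphi_{D_m}\le\varepsilon$. Subtracting $\varepsilon$ gives a $\theta$-psh function $\varphi_{D'}\le 0$ globally, and after passing to a model with reduced special fibre (reduced fibre theorem) Lemma~\ref{model functions and multiplicities} converts this into effectivity of $-E'$. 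Second, nefness is upgraded to honest ampleness via Proposition~\ref{semipositive vs ample}: one finds a further $\varphi''$ with $-\varepsilon\le\varphi''\le 0$ so that $\pi^*\Lcal\otimes\Ocal(D'+D'')$ is ample on a model. Ampleness gives genuine global generation of $\rho^*\Lcal^{\otimes m}\otimes\Ocal(m(E'+E''))$ for $m\gg 0$, and since $\Ocal(m(E'+E''))\subset\Ocal_{\Ycal}$ these global sections land in $H^0(\rho^*\Lcal^{\otimes m})=H^0(g^*\Lcal^{\otimes m})$, giving $\Ocal(m(E'+E''))\subset\Ocal_\Ycal\mathfrak b_m$ and hence $\tfrac1m\log|\mathfrak b_m|\ge\varphi_{E'+E''}\ge -3\varepsilon$ on $S$. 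Your relative-ampleness route, borrowed from Lemma~\ref{lemma limit psh function}, cannot substitute for this step: in that lemma one only needs to compute $\log|\mathfrak a|$ for a \emph{fractional} ideal, whereas here the sections must be honest sections of $g_m^*\Lcal^{\otimes m}$ to contribute to the base-ideal.
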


This lemma is similar to the first step in the proof of \cite[Lemma 5.12]{BFJ1}. 
Note that we do not assume here that $X$ and the model $\Xcal$ are normal. This leads to additional complications. In case of a normal model $\Xcal$ (as in {\it{loc. cit.}}), 
the finite morphisms $g_m$ are the identity and $\mathfrak{b_m}$ is just the base-ideal $\mathfrak{a_m}$ of $\Lcal^{\otimes m}$ on 
$\Zcal_m=\Xcal$. 
Then pointwise convergence holds on $X^{\rm div}$. 
If the normalization $\Zcal$ of $\Xcal$ would be finite over $\Xcal$, then we could use $\Zcal_m = \Zcal$ for all $m$.

{After we have submitted this paper, Boucksom and Eriksson \cite[Theorem 4.20]{BE} showed that the normalization $\Zcal$  is indeed  finite over $\Xcal$ and so we may use $\Zcal_m=\Zcal$ in the lemma. Moreover, the pointwise convergence holds on $X^{\rm div}$. We were informed by Ofer Gabber that the finiteness of the normalization $\Zcal$ over $\Xcal$ was also shown by Anantharaman in \cite[Th\'eor\`eme 1' in Appendice II]{Anantharaman}. We thank Ofer Gabber very much for providing us with this reference.}

\begin{proof}
We have 
$\mathfrak{a}_m \cdot \mathfrak{a}_l \subset \mathfrak{a}_{m+l}$ by definition of 
the base-ideal $\mathfrak{a_m}$ of $\Lcal^{\otimes m}$.
It follows that the sequence $(\log|\mathfrak{a}_m|)$ is super-additive, i.e.
$$ \log|\mathfrak{a}_{m+l}| \geq \log|\mathfrak{a}_m| + \log|\mathfrak{a}_l| $$
for all $m,l \in \N$. 
For $m$ sufficiently large, $\mathfrak{a}_m$ is a coherent {vertical} ideal sheaf and hence $\log|\mathfrak{a}_m|>-\infty$.
By Fekete's super-additivity lemma, the limit of the sequence $\frac{1}{m}\log|\mathfrak{a}_m|$ exists pointwise in $]-\infty, \infty]$. Since $\mathfrak{a}_m$ is an  ideal sheaf, we have $\frac{1}{m}\log|\mathfrak{a}_m| \leq 0$ anyway and so Fekete's super-additivity lemma gives in fact 
\begin{equation} \label{application of Fekete}
 -\infty < \lim_{m \to \infty} \frac{1}{m}\log|\mathfrak{a}_m|  = \sup_m  \frac{1}{m}\log|\mathfrak{a}_m| \leq 0
\end{equation}
pointwise on $\Xan$. 

We choose 
 $\ve >0$ in the value group $\Gamma$ and we set $\theta := c_1(L,\metr)$. 
Now we use that $\metr$ is the pointwise limit of semipositive model metrics on $L$ over $X^{\rm div}$. 
This is equivalent to the property that $0$ is the pointwise limit of $\theta$-psh model-functions over $X^{\rm div}$(see \ref{semipositive metric vs psh}).
 It follows from \ref{lemma model function Cartier} that there is a vertical $\Q$-Cartier divisor $D$ on a model $\Xcal'$ of $X$ such that $\varphi_D$ is $\theta$-psh and such that 
\begin{equation} \label{pointwise inequalities}
\varphi_D(x) \geq -\ve, \quad 
\varphi_D(y) \leq \ve 
\end{equation}
for all $x \in S$ and all divisorial points $y$ associated with $\Xcal$. 
We may assume $D$ lives on a $\kcirc$-model $\Xcal'$ with  a morphism $\pi:\Xcal' \rightarrow \Xcal$ extending the identity on $X$.  
By Proposition \ref{theta-psh and skeleton}, we get $\varphi_D \leq \varepsilon$.

Let us consider the model function $\varphi_{D'} \coloneqq \varphi_D - \ve$ on $X$ 
with associated vertical $\Q$-Cartier divisor $D'$ on $\Xcal'$.
We conclude that 
\begin{equation} \label{varphi' inequalities}
\varphi_{D'}(x) \geq -2 \ve, \quad \varphi_{D'} \leq 0
\end{equation}
for all $x \in S$. 
We note that  $\Ocal(D') \cong \Ocal(D)$ as $\Q$-line bundles (which means that $D$ and $D'$ are $\Q$-linearly equivalent) and hence 
$\pi^*(\Lcal) \otimes \Ocal(D')$ is nef using that $\varphi_D$ is $\theta$-psh. Let $\theta'$ be  the corresponding semipositive closed $(1,1)$-form on $X$. Since $\{\theta'\}=\{\theta\}$ is ample, we may apply Proposition \ref{semipositive vs ample} to deduce that there is a sufficiently large $\kcirc$-model $\Xcal''$ dominating $\Xcal'$ and a model function $\varphi''$ with
\begin{equation} \label{varphi'' inequalities}
 -\ve \leq \varphi''\leq 0
\end{equation}
such that $\theta' + dd^c\varphi''$ is $\Xcal''$-positive. 
{Let $D''$ be the  vertical $\Q$-Cartier divisor on $\Xcal''$ such that $\varphi'' = \varphi_{D''}$.}

 To ease notation, we may assume that $\Xcal' = \Xcal''$. Then we deduce that $\pi^*(\Lcal) \otimes \Ocal(D'+D'')$ is an ample $\Q$-line bundle. 
Using that $K$ is algebraically closed, the reduced fibre theorem \cite[Theorem 2.1']{BLR4}  shows that there is a proper model $\Ycal$ of $X$ dominating $\Xcal'$ with reduced special fibre. 
By Lemma \ref{model functions and multiplicities}, \eqref{varphi' inequalities} and \eqref{varphi'' inequalities}, the pull-backs $-E',-E''$ of $-D',-D''$ to $\Ycal$ are both effective vertical $\Q$-Cartier divisors. Let $\rho:\Ycal \to \Xcal$ be the morphism extending the identity on the generic fibre. 
We note that $\rho^*(\Lcal) \otimes \Ocal(E'+E'')$ is semiample which means that $\rho^*(\Lcal)^{\otimes m_0} \otimes \Ocal(m_0(E'+E''))$ is a honest line bundle on $\Ycal$ generated by global sections for a suitable $m_0 \in \N \setminus \{0\}$.

Since models are proper over $\kcirc$, any $\kcirc$-morphism between models is proper and hence we may consider the Stein factorization $\rho= g \circ \rho'$ as in \ref{Stein factorization} {for morphisms $g:\Zcal \to \Xcal$ and $\rho':\Ycal \to \Zcal$ of schemes over $\kcirc$.}
Similarly as in Step 2 of the proof of Lemma \ref{lemma limit psh function}, we deduce that 
$g: \Zcal \to \Xcal$ is a finite morphism of $\kcirc$-models of $X$ extending $\id_X$. By (3) in \ref{Stein factorization}, we have
\begin{equation} \label{sheaf push-forward trivial}
 \rho'_*(\Ocal_\Ycal) = \Ocal_\Zcal.
\end{equation}
By the projection formula and \eqref{sheaf push-forward trivial}, we get 
$\rho'_*(\rho^*(\KL^{\otimes m})) \cong g^*(\KL^{\otimes m})$ and hence
\begin{equation} \label{consequence of projection formula}
H^0(\Zcal, g^*(\Lcal^{\otimes m})) = H^0(\Ycal,\rho^*(\Lcal^{\otimes m})). 
\end{equation}
For all $m \in \N$ divisible by $m_0$, we have seen that $-mE'-mE''$ is an effective vertical Cartier divisor and hence 
$\Ocal(mE'+mE'')$ is a vertical ideal sheaf in $\Ocal_{\Ycal}$. We get a canonical inclusion
\begin{equation} \label{inclusion of sections}
 \rho^*(\Lcal)^{\otimes m} \otimes \Ocal(m(E'+E'')) \subset \rho^*(\Lcal)^{\otimes m} 
\end{equation}
for all $m \in \N$ divisible by $m_0$. The left hand side is globally generated.
Let $\mathfrak{b}_m$ be the base ideal of $g^*(\Lcal^{\otimes m})$ on $\Zcal$. 
We claim that 
\begin{equation} \label{inclusion into the base-ideal}
\Ocal(m(E'+E'')) \subset \Ocal_\Ycal \mathfrak{b}_m \subset \Ocal_\Ycal.
\end{equation}
Note that the inclusion $\Ocal(m(E'+E'')) \subset \Ocal_\Ycal$ is given by multiplication with the canonical global section $s_{-m(E'+E'')}$ of $\Ocal(-mE'-mE'')$. We check \eqref{inclusion into the base-ideal} at $y \in \Ycal$. Using semiampleness, there is a global section $s$ of $\rho^*(\Lcal)^{\otimes m} \otimes \Ocal(m(E'+E''))$ which does not vanish at $y$, i.e.\  $s^{-1}$ is a local section at $y$.  Let $t$ be any section of $\Ocal(m(E'+E''))$ around $y$. We have to show that $t \otimes s_{-m(E'+E'')}$ is a section of $\Ocal_\Ycal \mathfrak{b}_m$ around $y$. To see this, we write 
$$ t \otimes s_{-m(E'+E'')} = (s \otimes s_{-m(E'+E'')}) \otimes (t \otimes s^{-1}).$$
Since $-mE'-mE''$ is an effective vertical Cartier divisor, $ s \otimes s_{-m(E'+E'')} $ is a global section of $\rho^*(\Lcal^{\otimes m})$ and hence it is the pull-back of a global section of $g^*(\Lcal^{\otimes m})$ by \eqref{consequence of projection formula}. 
Moreover, $t \otimes s^{-1}$ is a local section of $\rho^*(\Lcal^{-m})$ around $y$ and hence it is an $\Ocal_\Ycal$-multiple of the pull-back of a local section of $g^*(\Lcal^{-m})$ at $\rho'(y)$. 
It follows from the definition of the base-ideal $\mathfrak{b}_m$ that $t \otimes s_{-m(E'+E'')}$ is a local section of 
$\Ocal_\Ycal \mathfrak{b}_m$ around $y$ proving \eqref{inclusion into the base-ideal}.

It follows from \eqref{varphi' inequalities}, \eqref{varphi'' inequalities} and \eqref{inclusion into the base-ideal} that
$$-3 \ve \leq \varphi_{E'+E''}(x) \leq \frac{1}{m} \log|\mathfrak{b}_m|(x)$$ 
for all $m \in \N$ divisible by $m_0$ and all $x \in S$. 
By \eqref{application of Fekete} 
applied to the base ideals $\mathfrak b_m$ on $\Zcal$ instead of $\mathfrak a_m$, we deduce that  
\begin{equation} \label{pointwise inequality}
-3 \varepsilon \leq \lim_{m \to \infty} \frac{1}{m} \log|\mathfrak{b}_m|(x) \leq 0
\end{equation}
for all $x \in S$. 
Using a sequence $\varepsilon \to 0$, we construct easily from  \eqref{pointwise inequality} a sequence of finite morphisms $g_m:\Zcal_m \to \Xcal$ with the required property.
\end{proof}

The following result is similar to step 2 in the proof of \cite[Lemma 5.12]{BFJ1}. 
Note that we need here another argument as  the multiplier ideals  used in \cite{BFJ1} do  not work in residue characteristic $p>0$.
Let us recall that a line bundle $L$ on a scheme is called \emph{semiample} if $L^{\otimes m}$ is globally generated for some $m\in \N_{>0}$.

\begin{lem} \label{Lemma 5.12-second step}
Let $L$ be a semiample line bundle on the projective variety $X$ over the algebraically closed non-archimedean field $K$ with a $\kcirc$-model $\Lcal$ on the algebraic $\kcirc$-model $\Xcal$ of $X$.  
Suppose that for any finite $S \subset X^{\rm div}$, there is a sequence of 
algebraic $\kcirc$-models $\Zcal_m$ of $X$ with $\id_X$ extending to 
finite morphisms $g_m:\Zcal_m \to \Xcal$ 
such that $\frac{1}{m}{\log |\mathfrak b_m|}$ converges pointwise to $0$ on $S$, where $\mathfrak b_m$ is the base-ideal of $g_m^*(\Lcal)^{\otimes m}$ as before. Then $\metr_\Lcal$ is a semipositive model metric.
\end{lem}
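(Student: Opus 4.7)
The plan is to verify directly that $\Lcal$ is nef on $\Xcal$, i.e., $\deg_\Lcal(C) \geq 0$ for every closed curve $C \subset \Xcal_s$ proper over $\tilde K$. Since $L$ is semiample, the base-ideal $\mathfrak b_m$ is a vertical coherent ideal on $\Zcal_m$ for every $m$ in a sufficiently divisible sequence (as global generation of $L^{\otimes m}$ on the generic fiber forces $\mathfrak b_m$ to equal $\Ocal$ there). Fix such a curve $C$, with generic point $\eta$. I would first associate to $C$ a finite set $S \subset X^{\rm div}$ by letting $\Xcal^\dagger \to \Xcal$ be the blow up along the schematic closure of $\eta$ in $\Xcal$ and taking $S$ to consist of the divisorial points corresponding to the irreducible components of the exceptional divisor; each element of $S$ has center on $\Xcal$ contained in $C$.

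Applying the hypothesis to $S$ produces, for each $m$ in our sequence, a finite morphism $g_m : \Zcal_m \to \Xcal$ and the base-ideal $\mathfrak b_m$ of $g_m^*\Lcal^{\otimes m}$ with $\tfrac{1}{m}\log|\mathfrak b_m|(v) \to 0$ for every $v \in S$. Let $\pi_m : \Ycal_m \to \Zcal_m$ be the blow up of $\mathfrak b_m$, so $\mathfrak b_m \cdot \Ocal_{\Ycal_m} = \Ocal(-F_m)$ for an effective Cartier divisor $F_m$; then by definition of the base-ideal the line bundle $\Mcal_m := \pi_m^* g_m^* \Lcal^{\otimes m} \otimes \Ocal(-F_m)$ is globally generated on $\Ycal_m$, hence nef.

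Next, I would pick a curve $\tilde C_m \subset \Ycal_{m,s}$ whose image under $g_m \circ \pi_m$ is $C$; such a curve exists because $g_m$ is finite surjective and $\pi_m$ is birational with $\pi_m$ surjective on special fibres. Writing $d_m$ for the mapping degree of $\tilde C_m \to C$, the projection formula gives $(g_m \circ \pi_m)^* \Lcal \cdot \tilde C_m = d_m \deg_\Lcal(C)$, and combining with the nefness of $\Mcal_m$ we get
\[ m\, d_m \deg_\Lcal(C) \;\geq\; F_m \cdot \tilde C_m. \]
It therefore suffices to show that $F_m \cdot \tilde C_m = o(m\, d_m)$ as $m \to \infty$ (the case in which $\tilde C_m$ is contracted by $\pi_m$ is actually favourable, since then $F_m \cdot \tilde C_m \leq 0$ by nefness of $\Mcal_m$).

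The main obstacle is exactly this last asymptotic estimate, which demands linking the intersection number $F_m \cdot \tilde C_m$ to the pointwise values $\log|\mathfrak b_m|(v)$ at the divisorial points $v \in S$. My plan is to select $\tilde C_m$ and $S$ compatibly so that, after passing to a common model dominating both $\Ycal_m$ and $\Xcal^\dagger$, each $v \in S$ corresponds to a component of $F_m$ whose preimage contains the strict transform of $\tilde C_m$. The contribution of these components to $F_m \cdot \tilde C_m$ is then bounded by a multiple of $\mathrm{ord}_v(\mathfrak b_m) = -\log|\mathfrak b_m|(v) = o(m)$ coming from the hypothesis, while the remaining components of $F_m$ meet $\tilde C_m$ properly and give a non-negative contribution which may be discarded from the inequality. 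Making this decomposition rigorous is the technical heart; one clean way to execute it is to first reduce via general hyperplane sections to the case $\dim X = 2$, where $C$ is a divisor on a surface and the correspondence between components of $F_m$ and the divisorial points in $S$ becomes transparent, letting the divisorial intersection theory of \cite{gubler98} close the argument.
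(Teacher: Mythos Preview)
Your overall shape is right---blow up the base ideal to get a globally generated, hence nef, line bundle $\Mcal_m$, and compare with the pullback of $\Lcal$---and the inequality $m\,d_m\,\deg_\Lcal(C)\geq F_m\cdot\tilde C_m$ is correct. The gap is exactly where you locate it, and your proposed resolution does not close it. The problem is the correspondence you assert between the components of $F_m$ that contain $\tilde C_m$ and the divisorial points in $S$. The set $S$ is built from the exceptional divisor of $\Xcal^\dagger\to\Xcal$ and records only valuations centered on $C$, whereas the components of $F_m$ are governed by the base ideal $\mathfrak b_m$ on $\Zcal_m$ and have nothing to do with $C$; their associated divisorial points need not lie in $S$, so the hypothesis gives you no control over their multiplicities $\mathrm{ord}_W(\mathfrak b_m)$. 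Moreover, for a component $W$ containing $\tilde C_m$ the self-intersection type number $W\cdot\tilde C_m$ is uncontrolled (and can be arbitrarily negative), and $d_m$ also varies with $m$, so the estimate $F_m\cdot\tilde C_m=o(m\,d_m)$ is not accessible this way. The proposed reduction to surfaces does not help: the hypothesis concerns $X^{\rm div}$ for the original $X$, and it is unclear how it transfers to a hyperplane section; besides, the mismatch between $S$ and the components of $F_m$ is just as present on surfaces.

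The paper avoids lifting curves entirely. It first blows up $\Xcal$ along the curve $Y$ so that $Y$ becomes a vertical Cartier divisor $E$ on a projective model $\Xcal'$, then takes $S$ to be \emph{all} divisorial points of $\Xcal'$. The key step is to use commutativity of the intersection product to rewrite the number to be estimated as $\deg\bigl(c_1(\Hcal_m)^{n-1}\cdot E_m\cdot\cyc(D_m)\bigr)$, where $D_m$ is the pullback of $V(\mathfrak b_m)$ to a model $\Xcal_m$ dominating $\Xcal'$. Now $\cyc(D_m)$ is a vertical $n$-cycle, and after passing to a model with reduced special fibre its multiplicities are precisely $-\log|\mathfrak b_m|(\xi_W)$. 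Pushing forward to $\Xcal'$ kills the components $W$ not mapping onto a component of $(\Xcal')_s$, and for the surviving ones the point $\xi_W$ \emph{is} a divisorial point of $\Xcal'$, hence lies in $S$, so the hypothesis bounds the multiplicities by $m\varepsilon$. The remaining intersection numbers $\deg\bigl(c_1(\Hcal')^{n-1}\cdot E\cdot V\bigr)$ live on the fixed model $\Xcal'$ and are therefore bounded independently of $m$; together with $\sum_{W\mapsto V}[W:V]=m_V$ this yields $\deg_{\Lcal'}\bigl(c_1(\Hcal')^{n-1}\cdot E\bigr)\geq -R\varepsilon\sum_V m_V$ and hence $\geq 0$. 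The moral is: choose $S$ large enough to capture all divisorial points of a fixed intermediate model, and swap the order of intersection so that the uncontrolled object ($D_m$) is treated as a cycle rather than as a divisor against a moving curve.
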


\begin{proof}
{In this proof, we will need intersection theory on  $\kcirc$-models. Since the base $\kcirc$ is not noetherian, we will use the intersection theory with Cartier divisors from \cite{gubler98} (see also \cite[Section 2]{GS15} and \cite[Appendix]{gubler-rabinoff-werner} for algebraic versions). The main ingredient is that every vertical Cartier divisor $D$ has an associated Weil divisor $\cyc(D)$ with multiplicities in  the value group $\Gamma$. To define the multiplicities, we pass to a dominating model with reduced special fibre and use the projection formula
(see \cite[3.8, 3.10]{gubler98}). In the algebraic setting, such a dominating model exists by the reduced fibre theorem \cite[Theorem 2.1']{BLR4}.} 

Let $n\coloneqq \dim(X)$. 
Hence $\Xcal$ is irreducible of dimension $n+1$. 
We choose a closed curve $Y$ in the special fibre $\Xcal_s$. Then we have to show that $\deg_\Lcal(Y) \geq 0$. 
We follow the strategy  of \cite{Goo69} to use the blow-up $\pi:\Xcal' \rightarrow \Xcal$ along $Y$ (as suggested in \cite[Remark 5.13]{BFJ1}). Then $E:=\pi^{-1}(Y)$ is an effective Cartier divisor on $\Xcal'$ which is vertical. 
Note that any $\kcirc$-model of $X$ is dominated by a projective $\kcirc$-model of $X$ \cite[Proposition 10.5]{gubler03} and so we may replace 
 $\Xcal'$  by a projective dominating model. 
 Then we have a very ample invertible sheaf $\Hcal'$ on $\Xcal'$. 
 {We may view the vertical closed subscheme $E$ of $\Xcal'$ as a projective scheme of pure dimension $n$ over the residue field $\ktilde$ and we consider the surjective morphism $E \to Y$ induced by $\pi$. Then the support of the Weil divisor $\cyc(E)$ is contained in $E$. Using a dominating model with reduced special fibre, using \cite[Proposition A.7]{gubler-rabinoff-werner} and the projection formula \cite[Proposition 4.5]{gubler98}, it is clear that $\cyc(E)$ has a component mapping onto $Y$.} 
 It follows from using generic hyperplane sections and the fibre theorem \cite[Exercise II.3.22]{Har} 
that $\pi_*({c_1(\Hcal')^{n-1}.\cyc(E)})$ is a positive multiple of $Y$. 
By the projection formula,  it is enough to show 
\begin{equation} \label{degree claim on blow up}
 \deg_{\Lcal'}(c_1(\Hcal')^{n-1}.\cyc(E)) \geq 0
\end{equation}
for $\Lcal':=\pi^*(\Lcal)$. 

Let $S$ be the set of divisorial points of $\Xan$ associated with the $\kcirc$-model $\Xcal'$. 
By \ref{divisorial points associated to algebraic model} the set $S$ is finite.
{By our standing assumptions in Lemma \ref{Lemma 5.12-second step}, there is} a sequence of finite morphisms $g_m:\Zcal_m \to \Xcal$ with base-ideal $\mathfrak b_m$ {of $g_m^*(\Lcal)^{\otimes m}$ such that $\frac{1}{m}{\log |\mathfrak b_m|}$ converges pointwise to $0$ on $S$.} 
The crucial new idea is to consider a sequence of morphisms $\psi_m:\Xcal_m \rightarrow \Xcal'$ related to the base-ideals $\mathfrak{b}_m$. In the following, $m$ is a sufficiently {divisible} integer such that the base-ideal $\mathfrak b_m$ is vertical. 
Let $\pi':\Zcal_m' \to \Zcal_m$ be the base change of $\pi$ to $\Zcal_m$ and 
let $\psi_m':\Xcal_m \rightarrow \Zcal_m'$ be the blow up of $\Zcal_m'$ in the closed subscheme $(\pi')^{-1}(V(\mathfrak{b}_m))$. Then we have a commutative diagram 
\[ 
\xymatrix{
\Xcal_m \ar[rd]_{\psi_m} \ar[r]^{\psi_m'} & \Zcal_m' \ar[r]^{\pi'}  \ar[d]^{g_m'} &  \ar[d]^{g_m} \Zcal_m \\
& \KX' \ar[r]^\pi & \KX
}
\] 
of morphisms of $\kcirc$-models of $X$ extending $\id_X$. Note that the base change $g_m'$ of $g_m$ is a finite morphism.
Setting $\pi_m' \coloneqq \pi' \circ \psi_m'$, we have an effective vertical Cartier divisor $D_m \coloneqq (\pi_m')^{-1}(V(\mathfrak{b}_m))$ on $\Xcal_m$ 
and we denote by $s_{-D_m}$ the canonical meromorphic section of $\mathcal{O}(-D_m)$.
We define $\pi_m := \pi \circ \psi_m$. 
Note that $E_m:=\pi_m^{-1}(Y)=\psi_m^{-1}(E)$ is an effective Cartier divisor on $\Xcal_m$ and that $\Hcal_m := \psi_m^*(\Hcal')$ is 
a line bundle on $\Xcal_m$ which is generated by global sections. 
We conclude from refined intersection theory that 
\begin{equation} \label{1-dim representative} 
 {\rm cl}(C)=c_1(\Hcal_m)^{n-1}.\cyc(E_m) \in {\rm CH}_1(E_m)
\end{equation}
for an effective $1$-dimensional cycle $C$ of $\Xcal_m$ with support over $Y$. We consider the invertible sheaf
$\Lcal_m := \pi_m^*(\Lcal^{\otimes m})\cong \psi_m^*(\Lcal'^{\otimes m}) 
\cong (\pi_m')^*(g_m^*(\Lcal^{\otimes m}))$
of $\Xcal_m$. 
We claim that 
\begin{equation} \label{crucial inequality}
\deg_{\Lcal_m}(C) \geq \deg_{\Ocal(D_m)}(C).
\end{equation}
To prove this, let $C_m$ be any irreducible component of $C$. We choose $\zeta_m \in C_m$ and let $\zeta := \pi_m'(\zeta_m)$. 
We note first that the stalk of $\Lcal_m(-D_m)$ at $\zeta_m$ is generated by global sections. Indeed, it follows from the definitions that there is a global section $s_m$ of $g_m^*(\Lcal^{\otimes m})$ and an invertible section $\ell_m$ of $g_m^*(\Lcal^{\otimes m})$ at $\zeta$ such that 
$(\pi_m')^*(s_m/\ell_m)$ is an equation of the Cartier divisor $D_m$ at $\zeta_m$.
Using that $D_m = \pi_m'^{-1}(V(b_m))$, a similar local consideration in any point of $\Xcal_m$ shows that 
  $t_m:= (\pi_m')^*(s_m) \otimes s_{-D_m}$ is a global section of $\Lcal_m(-D_m)$ and the choice of $s_m$ yields that $t_m$ generates the stalk at $\zeta_m$. We deduce that the restriction of $t_m$ to $C_m$ is a global section which is not identically zero
 and hence 
$$\deg_{\Lcal_m}(C_m) = \deg_{\Ocal(D_m)}(C_m) + \deg(\Div(t_m|_{C_m})) \geq \deg_{\Ocal(D_m)}(C_m)$$
proving \eqref{crucial inequality}. 
By the projection formula \cite[Proposition 4.5]{gubler98} and \eqref{1-dim representative}, we have 
$$ m  \deg_{\Lcal'}(c_1(\Hcal')^{n-1}.E) = \deg_{\Lcal_m}(C)$$
and hence \eqref{crucial inequality} leads to 
$$ m \deg_{\Lcal'}(c_1(\Hcal')^{n-1}.E) \geq \deg_{\Ocal(D_m)}(C)
=\deg_{\Ocal(D_m)}(c_1(\Hcal_m)^{n-1}.\cyc(E_m)).$$
Commutativity of intersection product \cite[Theorem 5.9]{gubler98} shows
\begin{equation} \label{application of commutativity}
 m \deg_{\Lcal'}(c_1(\Hcal')^{n-1}.\cyc(E)) \geq \deg(c_1(\Hcal_m)^{n-1}. E_m. \cyc(D_m)).
\end{equation}
As we may replace $\Xcal_m$ in the above considerations by any dominating $\kcirc$-model of $X$, the reduced fibre theorem \cite[Theorem 2.1']{BLR4} shows that we may assume that $\Xcal_m$ has reduced special fibre. 
We have 
$$\cyc(D_m) = \sum_{W} \mu_W W,$$
where $W$ ranges over all irreducible components of the special fibre of $\Xcal_m$.
Since the special fibre of $\Xcal_m$ is reduced, \cite[Lemma 3.21]{gubler98} shows that
there is a unique point $\xi_W$ of the analytification $\Xan$ of the generic fibre of $\Xcal_m$ with reduction equal to the generic point of $W$ 
and the multiplicities $\mu_W$ are given by 
$$\mu_W = -\log \|s_{D_m}(\xi_W)\|_{\Ocal(D_m)}.$$
We insert this in \eqref{application of commutativity} and use again projection formula to get
\begin{equation} \label{back to X'}
 m \deg_{\Lcal'}(c_1(\Hcal')^{n-1}.\cyc(E)) \geq \sum_V \sum_{W: \psi_m(W)=V} \mu_W [W:V] \deg(c_1(\Hcal')^{n-1}. E. V),
\end{equation}
where $V$ ranges over all irreducible components of $(\Xcal')_s$ and $W$ ranges over the irreducible components of $(\Xcal_m)_s$ with $\psi_m(W)=V$. Here, $[W:V]$ is the degree of the induced map $W\rightarrow V$. Note that $\xi_W=\psi_m(\xi_W)$ is a divisorial point of $\Xan$ which reduces to the generic point of $V$ in the model $\Xcal'$ 
and hence $\xi_W$ is an element of the  set $S$ of divisorial points of $\Xan$ associated with the $\kcirc$-model $\Xcal'$. 

We choose  $\ve>0$ small. 
By the convergence assumption on the base-ideals $\mathfrak b_m$ and using that $S$ is finite, there is a sufficiently {divisible}   $m$ such that 
$$0 \leq -\frac{1}{m}\log |\mathfrak{b}_m|(\xi_W) \leq \ve $$
for all $W$ as above.  We conclude that
\begin{equation} \label{multiplicity estimate}
0 \leq \mu_W = -\log \|s_{D_m}(\xi_W)\|_{\Ocal(D_m)}= - \log |\mathfrak{b}_m|(\xi_W) \leq  m\ve
\end{equation}
for all $V$ and $W$ as above with  $\psi_m(W)=V$. Let $-R$ be the minimum of the finitely many  intersection numbers $\deg(c_1(\Hcal')^{n-1}. E. V)$ and $0$. 
Then \eqref{back to X'} and \eqref{multiplicity estimate} lead to 
$$\deg_{\Lcal'}(c_1(\Hcal')^{n-1}.\cyc(E)) \geq -R \ve \sum_V \sum_{W: \psi_m(W)=V} [W:V].$$
By the projection formula for $\psi_m$ applied to the Cartier divisor $\Div(\rho)$ on $\Xcal'$  for any non-zero $\rho$ in the maximal ideal of 
$\kcirc$ and using that the special fibre of $\Xcal_m$ is reduced,
we deduce easily that
$$\sum_{W: \psi_m(W)=V} [W:V] =   m_V$$
for the multiplicity $m_V$ of $(\Xcal')_s$  along $V$. 
We conclude that 
$$\deg_{\Lcal'}(c_1(\Hcal')^{n-1}.\cyc(E)) \geq -R \ve \sum_V m_V.$$
The numbers $R$ and $m_V$ are independent of $\ve$. This proves \eqref{degree claim on blow up} and hence the claim. 
\end{proof}

In the following, we use the notation introduced in \S \ref{section psh model functions}. Recall  that $\mathcal{D}(X)$ denotes the space of model functions on $X$.

\begin{thm} \label{pointwise convergence vs uniform convergence}
Let $X$ be a proper  scheme over $K$ and let $\theta$ be a closed $(1,1)$-form on $X$. Then the set of $\theta$-psh model functions is closed in $\mathcal{D}(X)$ with respect to pointwise convergence on $X^{\rm div}$.
\end{thm}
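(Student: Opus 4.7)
The plan is to reduce the statement to Proposition~\ref{Lemma 5.12}, which handles the case of semipositive model metrics on an ample line bundle over a projective scheme. The reduction proceeds in two layers: passage from proper to projective $X$ via Chow's lemma, and passage from arbitrary $\theta$ to $\theta$ with ample de~Rham class by perturbation.

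To reduce to $X$ projective, I would invoke Chow's lemma to obtain a projective birational surjection $\pi\colon X' \to X$ with $X'$ projective. If $\varphi_n$ is $\theta$-psh, then $\pi^*\varphi_n$ is $\pi^*\theta$-psh on $X'$, because nef classes pull back to nef classes on dominating models. Pointwise convergence on $X^{\rm div}$ yields pointwise convergence of the pullbacks on $(X')^{\rm div}$, using that $\pi^{\rm an}$ carries divisorial points to divisorial points (see the appendix on divisorial points). If the theorem is known for $X'$, then $\pi^*\varphi$ is $\pi^*\theta$-psh on $X'$, and descent by a projection-formula argument (analogous to Proposition~\ref{pullback of semipositive formal}(b)) recovers semipositivity of $\theta + dd^c\varphi$ on $X$: each closed curve in the special fibre of a model of $X$ is dominated by a closed curve in the special fibre of a dominating model of $X'$, and its intersection with the relevant class scales only by a positive integer.

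Assume next that $X$ is projective and that $\{\theta\}$ is ample in $N^1(X)$. By Proposition~\ref{semipositive vs ample} there is an algebraic $\kcirc$-model $\Xcal$ on which $\theta$ is determined and a model function $\psi$ with $\theta + dd^c\psi$ being $\Xcal$-positive. After passing to a positive integer multiple $k$, the class $k(\theta+dd^c\psi)$ is represented by an honest ample line bundle $\Lcal$ on $\Xcal$ with $L := \Lcal|_X$ ample on $X$. The shifted metrics $\metr_n := \metr_\Lcal \cdot \exp(-k(\varphi_n - \psi))$ on $L^{\rm an}$ have curvature $k(\theta + dd^c\varphi_n)$, hence are semipositive model metrics on the ample line bundle $L^{\rm an}$ by~\ref{semipositive metric vs psh}; they converge pointwise on $X^{\rm div}$ to $\metr := \metr_\Lcal \cdot \exp(-k(\varphi - \psi))$. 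Proposition~\ref{Lemma 5.12} then forces $\metr$ to be semipositive, so $k(\theta + dd^c\varphi)$ and hence $\theta + dd^c\varphi$ is semipositive, giving $\varphi$ $\theta$-psh.

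For general $\theta$ on projective $X$ I would argue by perturbation. If no $\theta$-psh function exists the claim is vacuous, so by~\ref{vertical nef and generic fibre} we may assume $\{\theta\}$ is nef. Fix a semipositive closed $(1,1)$-form $\omega$ with ample de~Rham class, obtained from an ample $\kcirc$-model of an ample line bundle on $X$ via Proposition~\ref{extension of ampleness}. For each rational $\varepsilon>0$ the class $\{\theta+\varepsilon\omega\}$ is ample (nef plus ample), and each $\varphi_n$ is $(\theta+\varepsilon\omega)$-psh since $(\theta + \varepsilon\omega) + dd^c\varphi_n = (\theta + dd^c\varphi_n) + \varepsilon\omega$ is a sum of semipositive forms; the ample case above then shows $\varphi$ is $(\theta+\varepsilon\omega)$-psh. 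Choose a model $\Xcal_0$ on which $\theta,\omega,\varphi$ are all determined, and write $\alpha_0, \omega_0 \in N^1(\Xcal_0/S)$ for the classes of $\theta+dd^c\varphi$ and $\omega$. For a closed curve $C \subset \Xcal_{0,s}$, semipositivity of $\theta + \varepsilon\omega + dd^c\varphi$ produces a model $\Xcal_\varepsilon \to \Xcal_0$ on which the pullback of $\alpha_0 + \varepsilon\omega_0$ is nef; lifting $C$ to a curve $C' \subset \Xcal_{\varepsilon,s}$ surjecting onto it, the projection formula gives $\alpha_0 \cdot C + \varepsilon\, \omega_0 \cdot C \geq 0$, and letting $\varepsilon \to 0$ yields $\alpha_0 \cdot C \geq 0$. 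Thus $\alpha_0$ is nef on $\Xcal_0$, so $\theta + dd^c\varphi$ is semipositive and $\varphi$ is $\theta$-psh. The main obstacle I expect is the ample case: one must carefully check that the shifted metrics $\metr_n$ genuinely fall within the framework of Proposition~\ref{Lemma 5.12} (same ample line bundle on $X$, pointwise convergence preserved through the shift by the fixed model function $\psi$), while the Chow reduction and the final perturbation are more formal manipulations relying on the projection formula and the closedness of the nef cone under $\varepsilon \to 0$.
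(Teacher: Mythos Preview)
Your overall architecture---Chow's lemma, then an ``ample'' case via Proposition~\ref{Lemma 5.12}, then a perturbation letting $\varepsilon\to 0$---matches the paper's, and the reductions at the two ends are essentially the same. The gap is in the middle step.

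You write: ``After passing to a positive integer multiple $k$, the class $k(\theta+dd^c\psi)$ is represented by an honest ample line bundle $\Lcal$ on $\Xcal$.'' This need not hold. Closed $(1,1)$-forms live in $N^1(\Xcal/S)=\Pic(\Xcal)_\R/(\text{num.~trivial})$, an $\R$-vector space, and $\theta$ can be a genuinely irrational class. Proposition~\ref{semipositive vs ample} only tells you that $\theta+dd^c\psi$ is $\Xcal$-positive, i.e.\ a positive \emph{real} combination of ample classes; no integer multiple will bring an irrational class into $\Pic(\Xcal)$. Without an honest line bundle $\Lcal$ you cannot form the model metrics $\metr_{\Lcal}\cdot\exp(-k(\varphi_n-\psi))$ needed to invoke Proposition~\ref{Lemma 5.12}. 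The same obstruction persists in your perturbation step, since $\theta+\varepsilon\omega$ is still an $\R$-class.

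The paper resolves this by merging your two perturbations into one done inside $N^1(\Xcal/S)$. After normalising to $\varphi=0$, it picks ample line bundles $\Hcal_1,\dots,\Hcal_n$ whose classes form a basis of $N^1(\Xcal/S)$, writes $\theta=\sum_j\lambda_j c_1(\Hcal_j)$, and replaces each $\lambda_j$ by a nearby rational $\lambda_j+\varepsilon_j$ with $\varepsilon_j>0$. This produces an honest ample $\Q$-line bundle $\Lcal_\varepsilon$ with ample restriction $L_\varepsilon$, and the key point is that for any $\theta$-psh $\psi$ one has $c_1(L_\varepsilon,e^{-\psi}\metr_{\Lcal_\varepsilon})=\theta+dd^c\psi+\sum_j\varepsilon_j c_1(\Hcal_j)$, which is still semipositive since the perturbation $\sum_j\varepsilon_j c_1(\Hcal_j)$ is ample. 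Now Proposition~\ref{Lemma 5.12} applies directly to $\metr_{\Lcal_\varepsilon}$, and closedness of the nef cone handles $\varepsilon\to 0$. The rationality and the nef-to-ample perturbation are achieved simultaneously; separating them as you do leaves the rationality issue unaddressed.
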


This is a generalization of Theorem 5.11 in \cite{BFJ1} as we allow 
$K$ to be an arbitrary non-archimedean field and also because we allow any proper scheme $X$. 

\begin{proof}
We may check semipositivity for the pull-back with  respect to a proper surjective morphism $X' \to X$ 
{by Proposition \ref{pullback of semipositive formal} (b).} 
 Using Chow's lemma and Proposition \ref{generically finite morphism}, we conclude that we may assume $X$ projective.

Let $\varphi$ be a model function on $X$ which is the pointwise limit over $X^{\rm div}$ of $\theta$-psh model functions on $\Xan$. 
Replacing $\theta$ by $\theta + dd^c \varphi$, we may assume that $\varphi = 0$. 
 Then the existence of a $\theta$-psh model function {$\psi$} yields that $\theta {+dd^c \psi}$ is semipositive and hence $\{\theta\}$ is nef 
(see \ref{vertical nef and generic fibre}). 
Let $\Xcal$ be an algebraic $\kcirc$-model of $X$ such that $\theta$ is determined on $\Xcal$. Then the restriction of $\theta_\Xcal$ to $X$ is nef. 
Since any $\kcirc$-model of $X$ is dominated by a projective $\kcirc$-model of $X$ \cite[Proposition 10.5]{gubler03}, we may assume that $\Xcal$ is projective.

The proof of Proposition \ref{semipositive vs ample} shows that $N^1(\Xcal/S)$ is a finite dimensional $\R$-vector space as we can see it as a subspace of $N^1(\Xcal_s)$. 
We have also seen that the ample cone in $N^1(\Xcal/S)$ is the intersection of $N^1(\Xcal/S)$ with the ample cone in $N^1(\Xcal_s)$ and hence it is open in $N^1(\Xcal/S)$. We conclude that  
there are $\Hcal_1, \dots , \Hcal_n$ ample line bundles on $\Xcal$ such that their numerical classes $\alpha_j$ form a basis of $N^1(\Xcal/S)$. Then there are $\lambda_j \in \R$ such that 
$\sum_j \lambda_j c_1(\Hcal_j) \in \Pic(\Xcal)_\R$ 
represents $\theta$.  Let $\ve_j$ be small positive numbers such that the numbers $\lambda_j + \ve_j$ are rational. We consider the $\Q$-line bundle
$$\Lcal_\ve := \bigotimes_j \Hcal_j^{\otimes (\lambda_j + \ve_j)}$$ 
on $\Xcal$ and let $L_\ve := \Lcal_\ve|_X$. 
Since $\{\theta\}$ is nef and  $\ve_j > 0$, it follows that  $L_\ve$  is ample. For any model function $\psi$ on $X$, we have 
$$c_1(L_\ve,e^{-\psi}\metr_{\Lcal_\ve}) = dd^c \psi + \theta + \sum_j \ve_j \alpha_j.$$
We conclude that a $\theta$-psh model function $\psi$ yields a semipositive model metric $e^{-\psi}\metr_{\Lcal_\ve}$. 
Since $\varphi=0$ is the pointwise limit over $X^{\rm div}$ of $\theta$-psh model functions $\psi$ on $X$,  we deduce that $\metr_{\Lcal_\ve}$ is the pointwise limit over $X^{\rm div}$ of semipositive model metrics on $L_\ve$. It follows from Proposition \ref{Lemma 5.12} that   $\metr_{\Lcal_\ve}$  is semipositive. This means that $\Lcal_\ve$ is  nef. 

By definition of  nef and using $N^1(\Xcal/S) \subset N^1(\Xcal_s)$, we see that the cone in $N^1(\Xcal/S)$ of  nef classes is the intersection of $N^1(\Xcal/S)$ with the nef cone in $N^1(\Xcal_s)$. In particular, the cone of  nef classes is closed in $N^1(\Xcal/S)$. 
Using $\ve = (\ve_1, \dots, \ve_n) \to 0$, we deduce that $\sum_j \lambda_j c_1(\Hcal_j) $   is  nef.
Since the latter represents $\theta$, we conclude that $\varphi = 0 $ is $\theta$-psh. 
\end{proof}

\begin{rem} \label{proof of the main theorem in intro}
Note that Theorem \ref{pointwise convergence of metrics} is 
a special case of Theorem \ref{pointwise convergence vs uniform convergence} by using \ref{semipositive metric vs psh}.
\end{rem}

\appendix

\section{Divisorial points} \label{appendix on divisorial points}

\begin{definition} \label{definition of a divisorial point}
Let $V$ be  a paracompact strictly $K$-analytic space. A \emph{divisorial point} $x$ of $V$ is a point $x \in V$ such that there is a formal $\kcirc$-model $\fV$ with reduction map $\pi:V \to \fV_s$ such that $\pi(x)$ is the generic point of an irreducible component of $\fV_s$. 
We call $x$ also a \emph{divisorial point} associated with the model $\fV$.
\end{definition}

\begin{art} \label{Shilov boundary}
Let $V$ be a strictly $K$-affinoid space. We recall the following facts from \cite[Proposition 2.4.4]{berkovich-book}: 
The {\it Shilov boundary of $V$} is the unique minimal closed subset $\Gamma$ of $V$ with the property that $\max_{x \in \Gamma} |f(x)| = \max_{x \in V} |f(x)| $ for every $f \in \Acal:=\Ocal(V)$. Note that $\Acal$ is a strictly $K$-affinoid algebra and let 
 $\tilde\Acal := \{a \in \Acal \mid |a|_{\rm sup} \leq 1\}/\{a \in \Acal \mid |a|_{\rm sup} < 1\}$ be its canonical reduction.
There is a \emph{canonical reduction map} $V \to \Spec(\tilde{\Acal})$ which is surjective. The generic point of an irreducible component $E$ of $\Spec(\tilde{\Acal})$ has a unique preimage  in $V$ denoted by $\xi_E$. 
The Shilov boundary $\Gamma$ is equal to the finite set of points $\xi_E$ with $E$ ranging over all irreducible components of the canonical reduction $\Spec(\tilde{\Acal})$.
\end{art}

\begin{prop} 
\label{divisorial points associated to formal affine model}
Let $\fV= \Spf(A)$ be a formal affine $\kcirc$-model of the strictly $K$-affinoid space $V$. Then the set of divisorial points of $V$ associated with $\fV$ is equal to the Shilov boundary of $V$. In particular, this set is finite.
\end{prop}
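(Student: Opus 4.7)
My plan is to deduce the statement from Berkovich's description of the Shilov boundary recalled in \ref{Shilov boundary} by comparing the formal reduction $\fV_s = \Spec \tilde A$, where $\tilde A := A \otimes_\kcirc \tilde K$, with the canonical reduction $\Spec \tilde\Acal$ of $\Acal := A \otimes_\kcirc K$. The natural map $A \to \Acal$ lands in $\Acal^\circ = \{f \in \Acal \mid |f|_{\rm sup} \leq 1\}$, because any element of the admissible $\kcirc$-algebra $A$ has a representative of Gauss norm at most one, and $\Acal^\circ$ is integral over $A$ by the standard fact that power-bounded elements of a strictly $K$-affinoid algebra are integral over any admissible $\kcirc$-subalgebra generating it (see \cite{bgr}). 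Reducing modulo $\kcirc^{\circ\circ}$ yields an integral ring homomorphism $\tilde A \to \tilde\Acal$, and a straightforward unwinding of the definitions shows that the two reduction maps on $V$ are linked by $\pi_\fV = \varphi \circ \pi_{\rm can}$, where $\varphi : \Spec \tilde\Acal \to \Spec \tilde A$ is the induced morphism of spectra: both composites send $x \in V$ to the kernel in $\tilde A$ of $A \to \Acal^\circ \to \Hcal(x)^\circ \to \widetilde{\Hcal(x)}$.

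The core step is then to show that $\varphi$ restricts to a surjection from the generic points of components of $\Spec \tilde\Acal$ onto the generic points of components of $\Spec \tilde A$, with no other preimages on either side. For surjectivity of $\varphi$ on underlying topological spaces, I will show that $\ker(\tilde A \to \tilde\Acal)$ is contained in the nilradical of $\tilde A$: any class in this kernel lifts to some $a \in A$ with $|a|_{\rm sup} < 1$; picking a nonzero $\pi \in \kcirc^{\circ\circ}$ and $n$ large so that $|a^n|_{\rm sup} \leq |\pi|$, the element $a^n/\pi$ lies in $\Acal^\circ$, so an integral relation for $a^n/\pi$ over $A$, multiplied by the appropriate power of $\pi$, yields $a^{nN} \in \pi A \subset \kcirc^{\circ\circ} A$, and $a$ is nilpotent in $\tilde A$. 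Since $\varphi$ is integral, closures of points go to closures of equal dimension, so $\zeta \in \Spec \tilde\Acal$ is a generic point of a component precisely when $\varphi(\zeta)$ is a generic point of a component of $\Spec \tilde A$.

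Combining these ingredients, the set of divisorial points $\pi_\fV^{-1}(\{\text{generic points of components of } \Spec \tilde A\})$ coincides with $\pi_{\rm can}^{-1}(\{\text{generic points of components of } \Spec \tilde\Acal\})$, which is the Shilov boundary of $V$ by \ref{Shilov boundary}; finiteness is automatic because $\tilde\Acal$ is a finitely generated $\tilde K$-algebra. The only delicate input is the integrality of $\Acal^\circ$ over $A$, which I will quote from \cite{bgr}; the remainder is standard commutative algebra for integral extensions (lying-over plus dimension preservation) together with unraveling the two definitions of reduction map.
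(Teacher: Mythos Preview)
Your strategy coincides with the paper's: factor the formal reduction $\pi$ through the canonical reduction $\pi'$ via the comparison morphism $\iota\colon \Spec\tilde\Acal \to \fV_s$, and then argue that $\pi'(x)$ is the generic point of a component of $\Spec\tilde\Acal$ if and only if $\iota(\pi'(x))=\pi(x)$ is the generic point of a component of $\fV_s$. The paper simply cites \cite[Proposition~2.12]{GRW2} for $\iota$ being finite and surjective; you instead prove directly that $\Acal^\circ$ is integral over $A$ and that $\ker(\tilde A\to\tilde\Acal)$ is nilpotent, which recovers the same facts (finiteness follows since $\tilde\Acal$, being of finite type over $\tilde K$, is of finite type over $\tilde A$ and integral over it). So structurally the two proofs are identical, with your version supplying the details the paper outsources.

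There is, however, a gap in your justification of the key bi-implication. You write that since $\varphi$ is integral, ``closures of points go to closures of equal dimension, so $\zeta$ is a generic point of a component precisely when $\varphi(\zeta)$ is.'' The dimension statement is correct for finite-type $\tilde K$-schemes, but the conclusion does not follow: a point is generic in a component when its closure is a \emph{maximal} irreducible closed subset, and this is not detected by $\dim\overline{\{\zeta\}}$ alone unless both schemes are equidimensional. For a finite injective extension a minimal prime upstairs can contract to a non-minimal prime downstairs --- e.g.\ $\tilde K[x]\hookrightarrow \tilde K[x]\times\tilde K$, $f\mapsto(f,f(0))$, where the minimal prime $\tilde K[x]\times(0)$ contracts to $(x)$. (This map even arises as $\tilde A_{\rm red}\to\tilde\Acal$ for the admissible algebra $A=\kcirc\langle X,Y\rangle/(XY,\,Y^2-\pi Y)$ over a discretely valued $K$.) One direction of your claim is fine: incomparability for integral extensions gives that if $\varphi(\zeta)$ is generic then so is $\zeta$. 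But the converse, needed for the inclusion ``Shilov boundary $\subset$ divisorial points of $\fV$'', is not supplied by the dimension argument. The paper's proof is equally terse at exactly this step, so the issue is not peculiar to your write-up; but the reason you state does not close it, and you should either extract more from \cite[Proposition~2.12]{GRW2} than ``finite and surjective'' or argue this direction separately.
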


\begin{proof}
Let $\Acal := A \otimes_\kcirc K$ be the associated strictly $K$-affinoid algebra. By \cite[Proposition 2.12]{GRW2}, the canonical morphism $\iota:\Spec(\tilde\Acal) \to \fV_s$ is finite and surjective. 
Let $\pi:V \to \fV_s$ be the reduction map of $\fV$ and let $\pi': V \to \Spec(\tilde\Acal)$ be the canonical reduction map of $V$. Using that $\pi=\pi' \circ \iota$, we conclude for $x \in V$ that $\pi(x)$ is the generic point of an irreducible component of $\fV_s$ if and only if $\pi'(x)$ is the generic point of an irreducible point of $\Spec(\tilde\Acal)$. 
It follows that the set of divisorial points associated with $\fV$ is given by the points $\xi_E$ with $E$ ranging over the irreducible components of $\Spec(\tilde{\Acal})$.
We have seen in \ref{Shilov boundary} that this set  is the Shilov boundary of $V=\fV_\eta$ proving precisely our claim.
\end{proof}

\begin{art}
\label{local dimension}
For a point $x$ of a paracompact strictly $K$-analytic space $V$, recall that $\Ocal_{V,x}$ is endowed with a canonical seminorm $p_x$ which induces a canonical absolute value on the fraction field of $\Ocal_{V,x}/\{p_x=0\}$. The completion of this fraction field is a non-archimedean field extension of $K$  denoted by $\Hcal(x)$. As in \cite[9.1]{berkovich-book}, we define $s(x)$ as the transcendence degree of the residue field of $\Hcal(x)$ over $\ktilde$.

We define $\dim_x(V)$ as the minimum of the dimensions of the strictly $K$-affinoid domains in $V$ containing $x$. 
Let us pick any strictly $K$-affinoid domain $W$ of $V$ containing $x$. Then 
\begin{equation} \label{local dimension and irreducible components}
\dim_x(V) = \max_i \dim(\Ocal(W_i))
\end{equation}
where $W_i$ ranges over the irreducible components of $W$ containing $x$ and where we use the Krull dimension of the strictly $K$-affinoid algebra $\Ocal(W_i)$ on the right.
We refer to \cite[Section 1]{Duc07} for more details and additional properties on the dimension of $K$-analytic spaces.
It follows easily from  \cite[Lemma 2.5.2]{berkovich-ihes} that
\begin{equation} \label{s and local dimension}
s(x) \leq \dim_x(V).
\end{equation}
\end{art} 

\begin{prop} \label{criterion for divisorial points}
Let $x$ be a point of a paracompact strictly $K$-analytic space $V$. Then $x$ is a divisorial point of $V$ if and only if $s(x)=\dim_x(V)$.
\end{prop}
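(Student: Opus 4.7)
The overall strategy combines Proposition \ref{divisorial points associated to formal affine model}, which identifies divisorial points of a formal affine chart with Shilov boundary points of its generic fibre, with a transcendence-degree computation in the canonical reduction of a strictly $K$-affinoid neighbourhood of $x$. The always-valid inequality $s(x)\leq \dim_x(V)$ from \eqref{s and local dimension} supplies one half of the equality in each direction; the other half comes from exhibiting $\dim_x(V)$ algebraically independent elements in $\widetilde{\Hcal(x)}$ which descend from the reduction of a suitable formal model.

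For the direction divisorial $\Rightarrow s(x)=\dim_x(V)$, I would pick a formal $\kcirc$-model $\fV$ of $V$ in which $\pi(x)$ is the generic point of an irreducible component, and choose a formal affine open $\fU=\Spf(A)\subseteq \fV$ containing $\pi(x)$. Then $U:=\fU_\eta$ is a strictly $K$-affinoid subdomain of $V$ containing $x$, and $\pi(x)$ is the generic point of an irreducible component $E$ of $\Spec(\tilde A)=\fU_s$; by Proposition \ref{divisorial points associated to formal affine model}, $x$ is the Shilov point $\xi_E$. The standard structural fact that $\widetilde{\Hcal(\xi_E)}$ is an algebraic extension of $\kappa(\eta_E)$---an application of \cite[Lemma 2.5.2]{berkovich-ihes} to the canonical reduction of $U$---then yields $s(x)=\mathrm{tr.deg}_{\tilde K}\kappa(\eta_E)=\dim E$. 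Since admissibility of $\fV$ implies that $\fV_s$ is equidimensional of dimension $\dim V$, we have $\dim E=\dim U$, and the characterization of the local dimension recalled in \eqref{local dimension and irreducible components} gives $\dim U=\dim_x(V)$, whence $s(x)=\dim_x(V)$.

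For the converse, assume $s(x)=\dim_x(V)=n$ and choose a strictly $K$-affinoid subdomain $W$ of $V$ containing $x$ with $\dim W=n$. Pick $\bar a_1,\dots,\bar a_n\in\widetilde{\Hcal(x)}$ algebraically independent over $\tilde K$. Using density of the image of the fraction field of $\Ocal(W)$ in $\Hcal(x)$ and a standard rational-localization argument, one finds a strictly $K$-affinoid subdomain $W'\subseteq W$ containing $x$ and power-bounded elements $f_1,\dots,f_n\in\Ocal(W')^\circ$ with $|f_i(x)|=1$ and $\widetilde{f_i(x)}=\bar a_i$ in $\widetilde{\Hcal(x)}$. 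Let $\tilde f_i\in\widetilde{\Ocal(W')}$ be the corresponding canonical reductions, and $\mathfrak p$ the image of $x$ under the canonical reduction map of $W'$. The embedding $\kappa(\mathfrak p)\hookrightarrow\widetilde{\Hcal(x)}$ sends $\tilde f_i$ to $\bar a_i$, so $\mathrm{tr.deg}_{\tilde K}\kappa(\mathfrak p)\geq n$; combined with $\dim\widetilde{\Ocal(W')}=\dim W'\leq n$ this forces equality and shows $\mathfrak p$ is the generic point of an irreducible component of $\Spec(\widetilde{\Ocal(W')})$. By Proposition \ref{divisorial points associated to formal affine model}, $x$ is divisorial for any admissible formal affine model of $W'$; a Raynaud-style gluing argument extends this to a formal $\kcirc$-model of all of $V$ in which $x$ remains divisorial.

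The main technical obstacle is the final extension step in the converse: passing from a formal model of a small subdomain $W'$ witnessing divisoriality at $x$ to a formal $\kcirc$-model of $V$ with the same property. This is handled by Raynaud's theorem, which allows one to dominate both local and global models by a common admissible formal blow-up, combined with the fact that admissible blow-ups preserve generic points of components lying outside their centre; since $\mathfrak p$ is already a generic point of an irreducible component of the local model, the blow-up ideal can be chosen trivial near $\mathfrak p$ so that divisoriality at $x$ persists in the glued global model.
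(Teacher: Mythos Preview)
Your strategy matches the paper's: the converse is argued by producing $n$ power-bounded elements on a shrunken affinoid neighbourhood whose reductions witness that the canonical reduction of $x$ is a generic point, and the forward direction is treated as the easier implication (the paper simply cites \cite[Lemme 4.4]{Poi13}). Two points deserve comment.

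In the forward direction, your claim that admissibility of $\fV$ forces $\fV_s$ to be equidimensional of dimension $\dim V$ is not correct in general: nothing prevents $V$ itself from having irreducible components of different dimensions, and flatness over $\kcirc$ then produces a special fibre with the same defect. What you actually need, and what follows from \cite[Proposition 2.4.4]{berkovich-book}, is that the Shilov point $\xi_{E}$ lies on a unique irreducible component of the affinoid $U$ and that this component has dimension $\dim E$; combined with \eqref{local dimension and irreducible components} this gives $\dim_x(V)=\dim_x(U)=\dim E=s(x)$ directly, without any global equidimensionality statement.

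For the final extension step in the converse you invoke an admissible blow-up with centre chosen away from $\mathfrak p$. This works, but the paper's route is cleaner: \cite[Lemma 8.4.5]{bosch-lectures-2015} already furnishes a formal $\kcirc$-model $\fV$ of $V$ in which the affinoid $W'$ is realised as the generic fibre of a formal affine open $\fU\subset\fV$. Since Proposition \ref{divisorial points associated to formal affine model} shows that the divisorial points of $W'$ associated with \emph{any} formal affine model coincide with its Shilov boundary, $x$ is automatically divisorial for $\fU$ and hence for $\fV$; no control over blow-up centres is required.
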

Let us remark that the result and its proof are similar to \cite[Corollaire 4.18]{Poi13}.
\begin{proof}
Let us prove that if  $s(x)=\dim_x(V)$ then $x$ is a divisorial point 
(the other implication follows easily from the definition of divisorial points, see for instance \cite[Lemme 4.4]{Poi13}). 
Let $g_1,\ldots,g_n $ be  elements in the residue field of $\Hcal(x)$ 
which are algebraically independent over $\tilde{K}$ where $n=\dim_x(V)$.  
Let $U$ be an $n$-dimensional strictly $K$-affinoid domain in $V$ containing $x$ and let $\Acal= \Ocal(U)$ be the corresponding strictly $K$-affinoid algebra.
The residue field of ${\Hcal(x)}$ can be identified with the residue field of the fraction field of 
$\Acal / \mathfrak{p}_x$ for the prime ideal $\mathfrak{p}_x := \{a \in \Acal \mid |a(x)|=0\}$.
For each $i=1\ldots n$ we can then find some functions 
$\alpha_i, \beta_i \in \Acal$ such that $|\beta_i(x)| \neq 0$, 
$|\alpha_i(x)/\beta_i(x)|=1$ and 
such that the residue classes of $\alpha_i(x)/\beta_i(x)$ are equal to $g_i$ in the residue field of $\Hcal(x)$.
Shrinking $U$ if necessary, we can then assume that the $\beta_i$'s are invertible on $U$ (it suffices to consider 
strictly $K$-affinoid Laurent domains of the form $\{ |\beta_i| \geq r \}$).
Replacing $U$ by the Weierstrass domain $\{|\alpha_i / \beta_i|\leq 1, i=1\ldots n\}$, we can even assume that $f_i:=\alpha_i/\beta_i \in \Acal^\circ$ for $i=1,\ldots,n$. 
These functions have residue classes 
$\tilde{f_i}=g_i$ in the residue field of ${\Hcal(x)}$. 
Let us now denote by $\tilde{x}\in \text{Spec}(\tilde{\Acal})$ the canonical reduction of $x$.  
Let $\kappa(\tilde{x})$ be the residue field of $\tilde{x}$.  In the following diagram
\[\Acal^\circ \to \widetilde{\Acal} \to \kappa(\tilde{x}) \hookrightarrow \widetilde{\Hcal(x)}\]
the last map is injective.
Since the $g_i$'s are algebraically independent, it follows that the $\tilde{f_i}$'s are algebraically independent in $\kappa(\tilde{x})$.
Since $\dim( \tilde{\Acal}) = \dim(\Acal) =n$ 
(see the remark at the end of $\S$ 6.3.4 in \cite{bgr}), 
it follows that $\tilde{x}$ is a generic point of $\text{Spec}(\tilde{\Acal})$ and hence it is a divisorial point of $U$. 
According to  
\cite[Lemma 8.4.5]{bosch-lectures-2015}, there exists an admissible formal scheme  $\fV$ 
with generic fibre  $V$ such that  $U$ is the generic fibre of a formal affine open subset  $\fU$ of  $\fV$. 
It follows that $x$ is a divisorial point associated with $\fV$.
\end{proof}

\begin{cor} \label{divisorial points are G-local}
Let $x$ be a point of  a strictly $K$-affinoid domain $W$ of the paracompact strictly $K$-analytic space $V$. Then $x$ is a divisorial point of $W$ if and only if $x$ is a divisorial point of $V$.
\end{cor}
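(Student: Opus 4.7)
The plan is to reduce the statement to Proposition \ref{criterion for divisorial points}, which characterizes a divisorial point intrinsically as one satisfying $s(x) = \dim_x(\cdot)$. So I need to verify that both $s(x)$ and $\dim_x$ give the same value whether computed inside $W$ or inside $V$. Then the equivalence of the two divisoriality conditions is immediate.

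For the invariant $s(x)$, there is nothing to do: the completed residue field $\Hcal(x)$ depends only on the local ring with its seminorm at $x$, and since $W$ is an analytic domain of $V$ this local ring is the same from either perspective. Hence $s(x)$, defined as $\mathrm{tr.deg}_{\tilde K}\widetilde{\Hcal(x)}$, is intrinsic to the point. This part is routine and really just an appeal to \cite[\S 2.1]{berkovich-book}.

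For the local dimension, I will prove the two inequalities separately, each in one line. For $\dim_x(V)\le\dim_x(W)$: any strictly $K$-affinoid domain $U'\subset W$ containing $x$ is, transitively, a strictly $K$-affinoid domain of $V$ containing $x$, so the infimum defining $\dim_x(V)$ is taken over a larger family than the one defining $\dim_x(W)$. For the reverse inequality $\dim_x(W)\le\dim_x(V)$: for any strictly $K$-affinoid domain $U\subset V$ containing $x$, the intersection $U\cap W$ is a strictly $K$-affinoid domain of $W$ containing $x$, and $\dim(U\cap W)\le\dim(U)$ because $U\cap W$ is an affinoid subdomain of $U$; taking the infimum over $U$ gives the claim.

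The main (very minor) technical point is the inequality $\dim(U\cap W)\le\dim(U)$ for strictly $K$-affinoid domains, which is a standard fact on Krull dimension of affinoid algebras under restriction to affinoid subdomains. Once all three intrinsic properties ($s(x)$ and both inequalities for $\dim_x$) are in hand, the corollary drops out at once: $x$ is divisorial in $W$ iff $s(x)=\dim_x(W)=\dim_x(V)$ iff $x$ is divisorial in $V$. I do not anticipate any real obstacle, as this is essentially a compatibility check of two already-established intrinsic characterizations.
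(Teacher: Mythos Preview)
Your proposal is correct and follows essentially the same approach as the paper: both reduce the corollary to Proposition~\ref{criterion for divisorial points} by observing that $s(x)$ and $\dim_x$ are unchanged when passing from $V$ to $W$. The paper asserts this invariance in a single sentence, whereas you spell out the two inequalities for $\dim_x$ explicitly; your argument is a faithful expansion of theirs.
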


\begin{proof}
Since the invariants $s(x)$ and $\dim_x(V)$ do not change if we pass from $V$ to $W$, the claim follows from Proposition \ref{criterion for divisorial points}.
\end{proof}

\begin{prop} \label{divisorial points and base change}
Let $F$ be a non-archimedean field extension of $K$ which is a subfield of $\C_K$  and let $x$ be a point of the base change $V \hat{\otimes}_K F$ of the paracompact strictly $K$-analytic space $V$.
Let $\varphi : V \hat{\otimes}_K F \to V$ be the natural map.
Then  $\varphi(x)$ is a divisorial point of $V$ if and only if  $x$ is a divisorial point of $V \hat{\otimes}_K F$.
\end{prop}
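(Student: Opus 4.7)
The plan is to reduce the statement to Proposition \ref{criterion for divisorial points}, which characterizes divisorial points by the numerical identity $s(\cdot) = \dim_{\cdot}$. Writing $y \coloneqq \varphi(x)$, it is therefore enough to establish the two invariance statements
\begin{equation*}
\dim_x(V \hat{\otimes}_K F) = \dim_y(V) \qquad \text{and} \qquad s(x) = s(y).
\end{equation*}

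For the dimension equality, I would argue locally: given a strictly $K$-affinoid domain $W \subset V$ containing $y$, the base change $W \hat{\otimes}_K F$ is a strictly $F$-affinoid domain of $V \hat{\otimes}_K F$ containing $x$, and the Krull dimensions of the corresponding affinoid algebras agree (as discussed in \cite[\S 1]{Duc07}). Taking the infimum over such $W$ and using \eqref{local dimension and irreducible components} yields the required equality.

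The heart of the argument is the identity $s(x) = s(y)$, which uses the hypothesis $F \subseteq \C_K$ in an essential way. Because $\tilde{\C_K}$ is the algebraic closure of $\tilde K$, the residue extension $\tilde F/\tilde K$ is algebraic; consequently transcendence degrees over $\tilde F$ and over $\tilde K$ of any extension of $\tilde F$ coincide. The inequality $s(x) \geq s(y)$ then follows at once from $\widetilde{\Hcal(y)} \subseteq \widetilde{\Hcal(x)}$. For the reverse inequality, I would use that $\Hcal(x)$ is the completion of a quotient of $\Hcal(y) \hat{\otimes}_K F$. Since completion does not enlarge the residue field, $\widetilde{\Hcal(x)}$ is a subfield of the residue ring of this quotient, which in turn is generated over $\widetilde{\Hcal(y)}$ by (images of) residues of elements of $F^\circ$, hence by elements of $\tilde F$. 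As $\tilde F$ is algebraic over $\widetilde{\Hcal(y)}$, the extension $\widetilde{\Hcal(x)}/\widetilde{\Hcal(y)}$ is algebraic and thus $s(x) = s(y)$.

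The main obstacle is the last step: controlling the residue field of the completed tensor product and the subsequent quotient. A naive representation of elements of $\Hcal(x)$ as finite sums $\sum_i a_i \otimes f_i$ with $a_i \in \Hcal(y)$, $f_i \in F$ does not a priori give individually norm-bounded terms because of possible cancellation in the ultrametric norm. To make the residue-level argument rigorous, I would either invoke graded reduction \`a la Temkin (under which $F \subseteq \C_K$ forces $\tilde F^\bullet/\tilde K^\bullet$ to be algebraic, so that the graded residue $\widetilde{\Hcal(x)}^\bullet$ is algebraic over $\widetilde{\Hcal(y)}^\bullet$), or alternatively reduce to a finite subextension $K'/K$ of $F/K$ by an approximation argument in the style of Proposition \ref{metric and base extension}(b) and then treat the finite case directly, where $V \hat{\otimes}_K K' \to V$ is a finite morphism respecting reductions of formal models. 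Once $s(x) = s(y)$ is secured, combining it with the dimension identity and Proposition \ref{criterion for divisorial points} yields the desired equivalence.
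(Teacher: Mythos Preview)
Your overall strategy coincides with the paper's: reduce to Proposition \ref{criterion for divisorial points} by establishing $\dim_x(V\hat\otimes_K F)=\dim_y(V)$ and $s(x)=s(y)$. For the dimension identity the paper simply cites \cite[Proposition 1.22]{Duc07}, which is precisely the statement you sketch.

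The one place where you diverge is the proof of $s(x)=s(y)$. You correctly isolate the crux (that $\tilde F/\tilde K$ is algebraic because $F\subset\C_K$) and then try to analyse $\widetilde{\Hcal(x)}$ by hand via $\Hcal(y)\hat\otimes_K F$, running into the cancellation issue you describe and proposing graded reductions or a descent to finite subextensions as remedies. The paper sidesteps this entirely: it invokes \cite[Lemma 9.1.1]{berkovich-book}, which already bounds $s(x)$ in terms of $s(y)$ and the residual transcendence degree of $F/K$; since the latter vanishes for $F\subset\C_K$, the equality $s(x)=s(y)$ drops out immediately. Your alternative routes (Temkin's graded residue fields, or reduction to finite $K'/K$) would also work, but they reprove what Berkovich's lemma already packages; citing that lemma is the cleaner move.
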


\begin{proof} 
By \cite[Proposition 1.22]{Duc07} we have 
\begin{equation} \label{local dimension and base change}
\dim_x(V \hat{\otimes}_K F)=\dim_{\varphi(x)}(V).
\end{equation}
The equality $s(x) = s(\varphi(x))$ follows easily from   \cite[Lemma 9.1.1]{berkovich-book}.
Then the claim follows  from \eqref{local dimension and base change} and Proposition \ref{criterion for divisorial points}.
\end{proof}

\begin{rem}
\label{remark div}
In general, if $F$ is an arbitrary non-archimedean extension of $K$, with the above notations, 
it is not true that divisorial points of  $V\hat{\otimes}_K F$ are mapped to divisorial points of $V$.
For instance, let $r \in |F^*|$ with $0<r<1$ and assume that $r^n \not\in |K^*|$ for all non-zero $n\in \N$. 
If  $\mathbb{D}$ denotes the closed unit disc over $K$, then the point $\eta_r \in \mathbb{D} \hat{\otimes}_K F$ 
given by the supremum over  the closed disc of radius $r$  is a divisorial point of  $\mathbb{D} \hat{\otimes}_K F$ 
(it is a point of type 2 in $\mathbb{D} \hat{\otimes}_K F$), 
but it is mapped to a point of type  3 in $\mathbb{D}$, namely the point corresponding to the closed disc of radius $r$ in $\mathbb{D}$, which is not a divisorial point of $\mathbb{D}$.
\end{rem} 

Now we restrict to the algebraic setting. For a proper scheme $X$ over $K$, let $X^{\rm div}$ denote the set of divisorial points of $\Xan$. The next result shows that it is enough to consider algebraic models.

\begin{prop} \label{algebraic divisorial points}
Let $X$ be a proper scheme over $K$. 
Then $x \in X^{\rm div}$ if and only if there is an algebraic  $\kcirc$-model $\Xcal$ of $X$ such that $x$ is a divisorial point associated with the formal completion $\hat{\Xcal}$.
\end{prop}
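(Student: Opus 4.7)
The ``if'' direction is immediate from Definition \ref{definition of a divisorial point}, since the formal completion $\hat{\Xcal}$ of any algebraic $\kcirc$-model $\Xcal$ of $X$ is itself a formal $\kcirc$-model of $\Xan$.

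For the ``only if'' direction, my plan is to start from the very definition of $x \in X^{\rm div}$: pick a formal $\kcirc$-model $\fX_0$ of $\Xan$ such that $\xi := \pi_{\fX_0}(x)$ is a generic point of an irreducible component of $(\fX_0)_s$. Applying Lemma \ref{lemma formal vs algebraic} furnishes an algebraic $\kcirc$-model $\Xcal$ of $X$ together with a morphism $\psi \colon \hat{\Xcal} \to \fX_0$ extending $\id_\Xan$, and the remaining task is to verify that $\eta := \pi_{\hat{\Xcal}}(x)$ is a generic point of $\hat{\Xcal}_s = \Xcal_s$.

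To achieve this, I would first upgrade $\psi$ to an admissible formal blowing up. Raynaud's theorem \cite[Lemma 8.4.4 (d)]{bosch-lectures-2015} allows us, after replacing $\hat{\Xcal}$ by a further admissible formal blowing up, to assume $\psi$ itself is such a blowing up; and as in the proof of Lemma \ref{lemma formal vs algebraic}, formal GAGA \cite[Theorem I.10.1.2]{fujiwara-kato-1} together with \cite[Proposition 8.2.6]{bosch-lectures-2015} permits this extra blowing up to be carried out algebraically, so we remain in the algebraic category after adjusting $\Xcal$. Working in a formal affine open $\fU \subseteq \fX_0$ containing $\xi$, the irreducible component $W$ of $\fU_s$ whose generic point is $\xi$ admits a strict transform $\widetilde{W}$ in $\psi^{-1}(\fU)_s$, which is itself an irreducible component. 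I would then invoke Proposition \ref{divisorial points associated to formal affine model} to identify $x$ as the unique preimage of $\xi$ in the Shilov boundary of $\fU_\eta$, and similarly identify $\eta$ as the unique preimage of the generic point of $\widetilde{W}$; the equality of these two unique preimages forces $\eta$ to be the generic point of $\widetilde{W}$, hence a generic point of $\hat{\Xcal}_s$.

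The main obstacle is precisely this last identification: showing that admissible formal blowing ups respect the Berkovich reduction of divisorial points, so that $\eta$ lands on the generic point of the strict transform $\widetilde{W}$ rather than on some lower-dimensional point of the exceptional locus. This compatibility is where the Shilov boundary characterization in Proposition \ref{divisorial points associated to formal affine model} intervenes decisively, in combination with the local Raynaud picture of an admissible formal blowing up as an isomorphism away from the blow-up centre.
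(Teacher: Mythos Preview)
Your overall strategy coincides with the paper's: invoke Lemma \ref{lemma formal vs algebraic} to dominate the given formal model $\fX_0$ by the completion of an algebraic model $\Xcal$, and then verify that the divisorial point $x$ for $\fX_0$ is also divisorial for $\hat{\Xcal}$. The paper simply asserts that this last step ``follows easily'' without further detail, so your instinct to spell it out is sound.

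However, the execution via strict transforms has a genuine gap. You take the irreducible component $W$ of $(\fX_0)_s$ through $\xi$ and form its strict transform $\widetilde W$ under the admissible blowing up. But nothing prevents the open ideal $\mathfrak{a}$ from cutting out a centre that contains $W$ entirely (for instance, on $\Spf\bigl(K^\circ\langle S,T\rangle/(ST)\bigr)$ one may blow up $(\pi,S)$, whose vanishing locus is one of the two components of the special fibre). In that case the strict transform is empty and the argument collapses. Moreover, the ``two unique preimages agree'' step is circular as stated: you would need to already know that $\eta$ sits over the generic point of $\widetilde W$ in order to invoke the Shilov uniqueness in Proposition \ref{divisorial points associated to formal affine model} for it.

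A cleaner way to close the gap avoids strict transforms altogether. From the compatibility of reduction maps one has $\psi_s(\eta)=\xi$, hence a field extension $\kappa(\xi)\hookrightarrow\kappa(\eta)$. Choose formal affine opens $\fU_0=\Spf(A_0)\ni\xi$ and $\fU=\Spf(A)\ni\eta$ with $\psi(\fU)\subset\fU_0$, and set $\Acal_0=A_0\otimes_{K^\circ}K$, $\Acal=A\otimes_{K^\circ}K$. Via the finite surjection $\Spec(\tilde\Acal_0)\to(\fU_0)_s$ used in the proof of Proposition \ref{divisorial points associated to formal affine model}, the hypothesis that $\xi$ is generic gives $\operatorname{trdeg}_{\tilde K}\kappa(\xi)=\dim\tilde\Acal_0=\dim\Acal_0=\dim_x(\Xan)$. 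On the other hand $\dim\tilde\Acal=\dim\Acal=\dim_x(\Xan)$ as well, so the component of $\Spec(\tilde\Acal)$ (hence of $\fU_s$) containing $\eta$ has dimension at most $\dim_x(\Xan)$. The inclusion $\kappa(\xi)\hookrightarrow\kappa(\eta)$ then forces $\operatorname{trdeg}_{\tilde K}\kappa(\eta)\geq\dim_x(\Xan)$, whence $\eta$ must be a generic point of $\hat\Xcal_s$. No blowing-up structure on $\psi$ is needed.
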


\begin{proof}
It is enough to show that a divisorial point $x$ of $\Xan$  associated with a formal $\kcirc$-model $\fV$ of $V$ is also associated with $\hat{\Xcal}$  
for a suitable algebraic $\kcirc$-model $\Xcal$ of $X$. 
This follows easily from the fact 
that $\fV$ is dominated by the formal completion of an algebraic $\kcirc$-model (see Lemma \ref{lemma formal vs algebraic}).
\end{proof}

\begin{art} \label{divisorial points associated to algebraic model}
We say that $x \in \Xan$ is a {\it divisorial point associated with the algebraic $\kcirc$-model $\Xcal$} if $x$ is a divisorial point associated with the formal completion $\hat{\Xcal}$ in the sense of Definition \ref{definition of a divisorial point}. Equivalently, this means that the reduction of $x$ is a generic point of an irreducible component of the special fibre $\Xcal_s$.

Note that $\hat{\Xcal}$  has a finite covering by formal affine open subsets. It follows from Proposition \ref{divisorial points associated to formal affine model} and Corollary \ref{divisorial points are G-local} that the set of divisorial points of $\Xan$ associated with $\Xcal$ is finite.
\end{art}

\begin{prop} \label{generically finite morphism}
Let $f:X \to Y$ be a generically finite surjective morphism of proper varieties over $K$. Then we have $X^{\rm div}=f^{-1}(Y^{\rm div})$.  
\end{prop}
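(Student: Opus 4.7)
The strategy is to translate divisoriality via the intrinsic criterion of Proposition~\ref{criterion for divisorial points} and then exploit the finiteness of the function field extension $K(X)/K(Y)$. Since $f$ is surjective and generically finite between irreducible varieties, $n := \dim X = \dim Y$ and $K(X)/K(Y)$ is a finite field extension. For any irreducible $K$-variety $V$ of dimension $n$, every point $x \in V^{\rm an}$ satisfies $\dim_x V^{\rm an} = n$: one chooses a strictly $K$-affinoid domain $W \ni x$ which is irreducible of dimension $n$, sitting inside an affine chart of $V$ around the scheme image of $x$, and applies formula~\eqref{local dimension and irreducible components}. Hence Proposition~\ref{criterion for divisorial points} identifies $X^{\rm div} = \{x : s(x) = n\}$ and $f^{-1}(Y^{\rm div}) = \{x : s(f(x)) = n\}$, so it remains to establish the equivalence $s(x) = n \Leftrightarrow s(f(x)) = n$.

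One direction is immediate: the morphism $f$ induces an isometric embedding $\mathcal{H}(f(x)) \hookrightarrow \mathcal{H}(x)$ of complete valued fields, hence an inclusion $\widetilde{\mathcal{H}(f(x))} \hookrightarrow \widetilde{\mathcal{H}(x)}$ of residue fields. Combined with the bound $s(\cdot) \leq n$, this yields $s(f(x)) \leq s(x) \leq n$, so $s(f(x)) = n$ forces $s(x) = n$, proving the inclusion $f^{-1}(Y^{\rm div}) \subseteq X^{\rm div}$.

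For the harder direction, let $x \in X^{\rm div}$, so that $s(x) = n$. Since the transcendence degree over $\tilde{K}$ of the residue field of a rank-one valuation on a finitely generated extension $L/K$ is bounded above by the transcendence degree of $L/K$, the equality $s(x) = n$ forces the scheme image of $x$ in $X$ to be the generic point. Thus $\mathcal{H}(x)$ is identified with the completion $\widehat{K(X)}_{v_x}$ of the function field $K(X)$ at a rank-one valuation $v_x$ extending the valuation of $K$; likewise, since $f$ is dominant, $\mathcal{H}(f(x))$ is the completion $\widehat{K(Y)}_{v_x|_{K(Y)}}$. Because $K(X)/K(Y)$ is a finite algebraic extension, classical valuation theory shows that $\mathcal{H}(x)/\mathcal{H}(f(x))$ is a finite extension of complete valued fields. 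Consequently the residue field extension $\widetilde{\mathcal{H}(x)}/\widetilde{\mathcal{H}(f(x))}$ is finite, in particular algebraic, so $s(x) = s(f(x)) = n$, which gives $f(x) \in Y^{\rm div}$. The main technical obstacle is the identification of $\mathcal{H}(x)$ and $\mathcal{H}(f(x))$ with these completions of function fields, together with the verification of the finiteness of $\mathcal{H}(x)/\mathcal{H}(f(x))$; both rest on the finite extension $K(X)/K(Y)$ and standard valuation theory on completions.
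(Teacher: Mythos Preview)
Your proof is correct and follows essentially the same approach as the paper: both reduce to Proposition~\ref{criterion for divisorial points}, use that $\dim_x(X^{\an})=\dim_y(Y^{\an})=n$ is constant on irreducible varieties, and conclude via the finiteness of $\mathcal{H}(x)/\mathcal{H}(f(x))$, which forces $s(x)=s(f(x))$. The only cosmetic difference is that the paper obtains this finiteness by restricting to an open $U\subset Y$ over which $f$ is finite (and observes that divisorial points necessarily lie in $(f^{-1}(U))^{\an}$, resp.\ $U^{\an}$), whereas you argue directly that a divisorial point has scheme image the generic point and then invoke the finiteness of $K(X)/K(Y)$ together with standard valuation theory for completions; these are two phrasings of the same argument.
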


\begin{proof}
There is an open dense subset $U$ of $Y$ such that $f$ induces a finite surjective morphism $f^{-1}(U) \to U$. For $x \in (f^{-1}(U))^{\rm an}$ and $y := f(x)$, we note that $\Hcal(x)/\Hcal(y)$ is a finite extension. Since $\dim_x(X)=\dim(X)=\dim(Y)=\dim_y(Y)$, it follows from 
Proposition \ref{criterion for divisorial points} 
 that $x$ is a divisorial point of $\Xan$ if and only if $y$ is a divisorial point of $\Yan$. The same criterion shows that every divisorial point of $X$ (resp. $Y$) is contained in the analytification of $f^{-1}(U)$ (resp. $U$). 
\end{proof}

\begin{prop} \label{divisorial points and irreducible components}
Let $(X_i)_{i \in I}$ be the irreducible components of a proper scheme $X$ over $K$. Then we have 
$$X^{\rm div} 
=
 \bigcup_{i \in I} (X_i)^{\rm div}.$$ 
\end{prop}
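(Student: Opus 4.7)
My plan is to apply the dimension-theoretic criterion of Proposition \ref{criterion for divisorial points} on both sides of the desired equality: a point $x$ of a paracompact strictly $K$-analytic space $V$ is divisorial if and only if $s(x) = \dim_x V$. I will use two elementary facts from Berkovich dimension theory, both standard for schemes of finite type over $K$: for an irreducible $K$-variety $Y$ one has $\dim_x Y^{\rm an} = \dim Y$ for every $x \in Y^{\rm an}$, and for a closed subscheme $Z$ of a scheme $W$ with $x \in Z^{\rm an}$ one has $\dim_x Z^{\rm an} \leq \dim_x W^{\rm an}$, in particular $\dim_x Z^{\rm an} \leq \dim Z$ (both follow by restricting affinoid domains and appealing to \cite[Section 1]{Duc07}).

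For the inclusion $\bigcup_i (X_i)^{\rm div} \subseteq X^{\rm div}$, I would pick $x \in (X_i)^{\rm div}$, so that $s(x) = \dim_x X_i^{\rm an} = \dim X_i$. The key observation is that the image $p$ of $x$ in $X$ lies on no other component $X_j$: since $X_i$ is maximal among irreducible closed subsets of $X$, we have $X_i \not\subseteq X_j$ for $j \neq i$, so $X_i \cap X_j$ is a proper closed subscheme of the irreducible $X_i$ and hence $\dim(X_i \cap X_j) < \dim X_i$. If $p$ were in $X_j$, then $x \in (X_i \cap X_j)^{\rm an}$ and the inequality \eqref{s and local dimension} applied inside $(X_i \cap X_j)^{\rm an}$ would give
\[
s(x) \leq \dim_x (X_i \cap X_j)^{\rm an} \leq \dim(X_i \cap X_j) < \dim X_i = s(x),
\]
a contradiction. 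Consequently $x$ lies in the open subset $X_i^{\rm an} \setminus \bigcup_{j \neq i} X_j^{\rm an}$ of $X^{\rm an}$, which provides a neighbourhood of $x$ contained in $X_i^{\rm an}$, so $\dim_x X^{\rm an} = \dim_x X_i^{\rm an} = s(x)$, and Proposition \ref{criterion for divisorial points} yields $x \in X^{\rm div}$.

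For the reverse inclusion $X^{\rm div} \subseteq \bigcup_i (X_i)^{\rm div}$, given $x \in X^{\rm div}$ I would choose any irreducible component $X_i$ containing the image of $x$ in $X$, so that $x \in X_i^{\rm an}$. Then the sandwich
\[
s(x) \leq \dim_x X_i^{\rm an} \leq \dim_x X^{\rm an} = s(x)
\]
collapses to $s(x) = \dim_x X_i^{\rm an}$, and Proposition \ref{criterion for divisorial points} applied inside $X_i^{\rm an}$ gives $x \in (X_i)^{\rm div}$.

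The main step requiring care is the dimension-drop observation used in the first inclusion: divisorial points of an irreducible component $X_i$ cannot lie on any other component, because their invariant $s$ would then be bounded by the strictly smaller dimension of an intersection $X_i \cap X_j$. Apart from this, everything is an immediate consequence of the local-dimension criterion of Proposition \ref{criterion for divisorial points} and the standard Berkovich dimension inequalities already underlying the rest of Appendix \ref{appendix on divisorial points}.
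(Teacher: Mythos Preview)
Your proof is correct and follows essentially the same approach as the paper: both arguments rest on the dimension criterion of Proposition~\ref{criterion for divisorial points} together with the observation that a divisorial point of $X_i^{\rm an}$ cannot lie on any other component $X_j$, since $s(x)$ would then be bounded by $\dim(X_i\cap X_j) < \dim X_i$. The only presentational difference is that, once this dimension-drop forces divisorial points into the Zariski open locus where $X$ and a single $X_i$ agree, the paper concludes by invoking the $G$-locality of divisorial points (Corollary~\ref{divisorial points are G-local}) and the insensitivity to nilpotents, whereas you verify $\dim_x X^{\rm an} = \dim_x X_i^{\rm an}$ directly; the underlying content is the same.
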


\begin{proof}
It follows from \eqref{s and local dimension} and Proposition \ref{criterion for divisorial points} that the divisorial points of $\Xan$ or of any $(X_i)^{\rm an}$ are contained in the Zariski open subset of $\Xan$ consisting of those divisorial points which are contained in the analytification of only one irreducible component of $X$. Now the claim follows from the fact shown in Corollary \ref{divisorial points are G-local} that divisorial points can be checked {\rm $G$}-locally and hence Zariski-locally.
Note also that divisorial points depend only on the induced reduced structure.
\end{proof}

\bibliographystyle{alpha}

\newcommand{\etalchar}[1]{$^{#1}$}
\def\cprime{$'$}

{\small Walter Gubler, Fakult\"at f\"ur Mathematik,  Universit\"at Regensburg,
Universit\"atsstrasse 31, D-93040 Regensburg, walter.gubler@mathematik.uni-regensburg.de}

{\small Florent Martin, Fakult\"at f\"ur Mathematik,  Universit\"at Regensburg,
Universit\"atsstrasse 31, D-93040 Regensburg, florent.martin@mathematik.uni-regensburg.de}

\end{document}